\def\NZQ{\mathbb}               
\def\QQ{{\NZQ Q}}
\def\ZZ{{\NZQ Z}}
\newtheorem{Theorem}{Theorem}[section]
\newtheorem{Lemma}[Theorem]{Lemma}
\newtheorem{Corollary}[Theorem]{Corollary}
\newtheorem{Proposition}[Theorem]{Proposition}
\newtheorem{Definition}[Theorem]{Definition}
\let\epsilon\varepsilon
\let\phi=\varphi
\let\kappa=\varkappa
\def \s {\sigma}
\begin{document}

\title{ Ramification of valuations and local rings in positive characteristic}
\author{Steven Dale Cutkosky}
\thanks{Partially supported by NSF}

\address{Steven Dale Cutkosky, Department of Mathematics,
University of Missouri, Columbia, MO 65211, USA}
\email{cutkoskys@missouri.edu}


\maketitle

\section{Introduction}
In this paper we consider birational properties of ramification of excellent local rings.

Suppose that $K^*/K$ is a finite separable field extension, $S$ is an excellent local ring of $K^*$ ($S$ has quotient field
$\mbox{QF}(S)=K^*$) and $R$ is an excellent  local ring of $K$ such that $\dim S=\dim R$, $S$  dominates $R$ ($R\subset S$ and the maximal ideals $m_S$ of $S$ and $m_R$ of $R$ satisfy  $m_S\cap R=m_R$) and $\nu^*$ is a valuation of $K^*$ which dominates $S$ (the valuation ring $V_{\nu^*}$ of $\nu^*$ dominates $S$).  Let $\nu$ be the restriction of $\nu^*$ to $K$.

The notation that we use in this paper  is explained  in more detail in Section \ref{SecNoc}.

\subsection{Local Monomialization.}


\begin{Definition}\label{DefMon} $R\rightarrow S$ is {\it monomial} if 
$R$ and $S$ are regular local rings of the same dimension $n$ and  there exist regular systems of parameters $x_1,\ldots,x_n$ in $R$ and $y_1,\ldots,y_n$ in $S$, units $\delta_1,\ldots,\delta_n$ in $S$
and an $n\times n$ matrix $A=(a_{ij})$ of natural numbers with nonzero determinant such that
\begin{equation}\label{eqI1}
x_i=\delta_i\prod_{j=1}^ny_j^{a_{ij}}\mbox{ for $1\le i\le n$.}
\end{equation}
\end{Definition}

If $R$ and $S$ have equicharacteristic zero and algebraically closed residue fields, then within the extension $\hat R\rightarrow \hat S$ there are regular parameters giving a form (\ref{eqI1}) with all $\delta_i=1$.

More generally, we ask if a given extension $R\rightarrow S$ has a local  monomialization along the valuation $\nu^*$.

\begin{Definition}\label{DefI1} A {\it local monomialization} of $R\rightarrow S$ is a commutative diagram
$$
\begin{array}{lllll}
R_1&\rightarrow &S_1&\subset& V_{\nu^*}\\
\uparrow&&\uparrow&&\\
R&\rightarrow&S&&
\end{array}
$$
such that the vertical arrows are products of monoidal transforms (local rings of blowups of regular primes)
and $R_1\rightarrow S_1$ is monomial.
\end{Definition}

It is proven in Theorem 1.1 \cite{C} that a local monomialization always exists when $K^*/K$ are algebraic function fields over a (not necessarily algebraically closed)
field $k$ of characteristic 0, and $R\rightarrow S$ are algebraic local rings of $K$ and $K^*$ respectively. (An algebraic local ring  is  essentially of finite type over $k$.)

We can also define the weaker notion of a {\it weak local monomialization} by only requiring that the conclusions of Definition
\ref{DefI1} hold with the vertical arrows  being required to be birational (and not necessarily factorizable by products of monoidal transforms).

This leads to the following question for extensions $R\rightarrow S$ as defined in the beginning of this paper:
{\it When does there  exist a local monomialization (or at least a weak local monomialization) of an extension $R\rightarrow S$ of excellent local rings dominated by a valuation $\nu^*$ ?}

As commented above, the question has a positive answer within  algebraic function fields over an arbitrary field of characteristic zero by Theorem 1.1 \cite{C}.

For the question to have a positive answer in positive characteristic or mixed characteristic it is of course necessary that some form of resolution of singularities be true.
This is certainly true in equicharacteristic zero, and is 
known to be true very generally in dimension $\le 2$ (\cite{LU}, \cite{Li}, \cite{CJS}) and in positive characteristic and dimension 3 (\cite{RES}, \cite{C1}, \cite{CP1} and \cite{CP2}). A few recent papers going beyond dimension three are \cite{dJ}, \cite{Ha}, \cite{BrV}, \cite{BeV}, \cite{H1}, \cite{T1}, \cite{T2}, \cite{KK} and \cite{Te}.

The case of two dimensional algebraic function fields over an algebraically closed field of positive characteristic is considered in \cite{CP}, where it is shown that monomialization is true  if $R\rightarrow S$ is a {\it defectless} extension  of two dimensional algebraic local rings over an algebraically closed field $k$ of  characteristic $p>0$ (Theorem 7.3 and Theorem 7.35 \cite{CP}).  We will discuss the important concept of  defect later on in this introduction.

In \cite{C2},  we give an example showing  that weak monomialization (and hence monomialization) does not exist in general for extensions of  algebraic local rings of dimension $\ge 2$ over a field  $k$ of  char $p>0$. We prove the following theorem in \cite{C2}:

\begin{Theorem}\label{TheoremNM2} (Theorem 1.4 \cite{C2}, Counterexample to local and weak local monomialization) Let $k$ be a field of characteristic $p>0$ with at least 3 elements and let $n\ge 2$. Then
there exists a finite separable extension $K^*/K$ of  n dimensional  function fields over $k$,  a valuation $\nu^*$ of $K^*$ with restriction $\nu$ to $K$ and algebraic regular local rings $A$ and $B$ of $K$ and $K^*$ respectively, such that $B$ dominates $A$, $\nu^*$ dominates $B$ and there do not exist
 regular algebraic local rings $A'$ of $K$ and $B'$ of $K^*$ such that $\nu^*$ dominates $B'$, $B'$ dominates $A'$, $A'$ dominates $A$, $B'$ dominates $B$ and $A'\rightarrow B'$ is monomial.
\end{Theorem}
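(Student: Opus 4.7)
The plan is to construct the counterexample explicitly using the notion of \emph{defect} of a valued field extension in positive characteristic, and then to show that defect persists under birational modifications while monomial extensions are necessarily defectless. Recall that for a finite separable valued field extension $K^*/K$ with $\nu^*$ extending $\nu$ of rank one, the fundamental equality $[K^*:K]=e(\nu^*/\nu)\cdot f(\nu^*/\nu)\cdot d(\nu^*/\nu)$ holds, where $d=p^s\ge 1$ is the defect. In residue characteristic zero one always has $d=1$; the strategy is to manufacture a rank-one valuation in characteristic $p>0$ with $d=p$ and derive a contradiction from any supposed monomial refinement.

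The first step is to exhibit the valued extension. Take $K=k(x_1,\ldots,x_n)$ together with a rank-one valuation $\nu$ on $K$, for instance a generalized Gaussian valuation whose value group is generated by $\QQ$-linearly independent values of the $x_i$. Then produce the separable extension $K^* = K(z)$ via an Artin--Schreier equation $z^p - z = g$, where $g\in K$ is chosen so that its approximate Artin--Schreier solutions in $K$ form a pseudo-Cauchy sequence without an actual Artin--Schreier solution in $K$. The hypothesis $|k|\ge 3$ supplies enough distinct coefficients to realize such a sequence inside the function field. The unique extension $\nu^*$ of $\nu$ to $K^*$ determined by sending $z$ to the pseudo-limit then has value group and residue field equal to those of $\nu$, forcing $e(\nu^*/\nu)=f(\nu^*/\nu)=1$ and hence $d(\nu^*/\nu)=p$. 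Take $A$ and $B$ to be localizations of $k[x_1,\ldots,x_n]$ and of its integral closure in $K^*$ at the maximal ideals which are the centers of $\nu$ and $\nu^*$; these are algebraic regular local rings with $\nu^*$ dominating $B$ dominating $A$.

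The second step is the obstruction argument. Suppose, for contradiction, that algebraic regular local rings $A'\supseteq A$ of $K$ and $B'\supseteq B$ of $K^*$ exist with $\nu^*$ dominating $B'$ and $A'\to B'$ monomial in the sense of Definition~\ref{DefMon}. Since the defect $d(\nu^*/\nu)$ is an invariant of the valued field extension $K^*/K$ itself, it remains equal to $p$ regardless of which dominated regular local rings are chosen. On the other hand, for a monomial extension with relations $x_i' = \delta_i\prod_j {y_j'}^{a_{ij}}$ and nonzero $\det(a_{ij})$, a direct computation in the completions shows that the $\nu^*$-values of the $y_j'$ and the residues of the units $\delta_i$ already account for the entire field degree, so the fundamental equality holds without defect: $[K^*:K] = e(\nu^*/\nu)\cdot f(\nu^*/\nu)$. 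This contradicts $d(\nu^*/\nu)=p$, proving the theorem.

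The principal obstacle is Step~1: one must exhibit the defect valuation within the algebraic function field setting, i.e.\ ensure simultaneously that $K^*/K$ is a separable degree-$p$ extension of $n$-dimensional function fields, that $\nu^*/\nu$ has genuine defect equal to $p$, and that $A$ and $B$ are essentially of finite type over $k$ rather than merely complete local rings. Once the valuation is in hand, the rest of the argument reduces to classical valuation-theoretic bookkeeping around the fundamental equality applied to the toric-monomial structure of $A'\to B'$.
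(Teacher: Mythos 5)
The theorem you are asked to prove is quoted from \cite{C2} and the present paper does not re-prove it, but your proposal has a genuine gap in the obstruction step that makes it unsalvageable as written.

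Your Step~2 rests on the claim that a monomial map $A'\to B'$ forces the defect of $\nu^*/\nu$ to vanish (``the fundamental equality holds without defect''). This is false, and the present paper itself exhibits the counterexample. The discussion of Theorem~7.38 of \cite{CP} (Sections~1 and~5 of this paper) describes a valued extension of two-dimensional function fields with $\delta(\nu^*/\nu)=2$ for which local monomialization \emph{does} exist --- only \emph{strong} local monomialization fails there. Thus positive defect is, by itself, not an obstruction to finding regular local rings $A'\to B'$ that are monomial. The correct relationship, proved in this paper's Proposition~\ref{Prop32} (generalizing Proposition~7.2 of \cite{CP}), is
$$
ad\,[S_1/m_{S_1}:R_1/m_{R_1}]=e(\nu^*/\nu)f(\nu^*/\nu)p^{\delta(\nu^*/\nu)},
$$
so that even for monomial forms (where $b=0$, $d=1$) the factor $p^{\delta}$ survives and is absorbed into the exponent $a$. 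A monomial extension with $a=e\cdot p^{\delta}$ satisfies all of your Definition~\ref{DefMon} constraints while carrying nonzero defect.

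Because of this, the entire contradiction in your Step~2 evaporates: one cannot conclude anything from the supposed existence of a monomial $A'\to B'$ merely by invoking defect. The actual counterexample in \cite{C2} requires a more delicate analysis of what every possible sequence of blowups can achieve above $A$ and $B$; it is \emph{not} enough to show defect is a birational invariant (it is) and that monomial maps have no defect (they may). As a secondary issue, your Step~1 constructs a single Artin--Schreier extension with $\delta=1$, while the paper states that the extension in Theorem~\ref{TheoremNM2} has $\delta(\nu^*/\nu)=2$, suggesting \cite{C2} uses a tower of two Artin--Schreier extensions (as in the related example of Theorem~7.38 \cite{CP}); a degree-$p$ extension with $e=f=1$ and defect $p$ will not by itself furnish the obstruction, since such extensions can still admit local monomialization.
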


We have that the defect $\delta(\nu^*/\nu)=2$ in the example of Theorem \ref{TheoremNM2} (with $\nu=\nu^*|K$).

In \cite{C} and \cite{CP}, a very strong form of local monomialization is established within characteristic zero algebraic function fields which we call strong local monomialization (Theorem 5.1 \cite{C} and Theorem 48 \cite{CP}).
This form is stable under appropriate sequences of monoidal transforms and encodes the classical invariants of the extension of valuation rings. In \cite{CP}, we show that strong local monomialization is true for defectless extensions of two dimensional algebraic function fields
(Theorem 7.3 and Theorem 7.35 \cite{CP}).
We give an example in \cite{CP} (Theorem 7.38 \cite{CP}) showing that strong local monomialization is not generally true for defect extensions of two dimensional algebraic function  fields (over a field of positive characteristic).

In this paper, we establish that local monomialization (and strong local monomialization) hold for defectless extensions of two dimensional excellent local rings. We will first state our theorem (which is proven in Section \ref{SecSM}), and then we will define and discuss the defect.

\begin{Theorem}\label{Theorem7} Suppose that $R$ is a 2 dimensional excellent local domain with quotient field $\mbox{QF}(R)=K$. Further suppose that $K^*$ is a finite separable extension of $K$ and $S$ is a two dimensional excellent local domain with quotient field
$\mbox{QF}(S)=K^*$ such that $S$ dominates $R$. Let $\nu^*$ be a valuation of $K^*$ which dominates $S$ and let $\nu$ be the restriction of $\nu^*$ to $K$. Suppose that the defect $\delta(\nu^*/\nu)=0$. Then 
there exists a commutative diagram  
\begin{equation}
\begin{array}{ccccc}
R_1&\rightarrow& S_1&\subset & V_{\nu^*}\\
\uparrow&&\uparrow&&\\
R&\rightarrow &S&&
\end{array}
\end{equation}
such that the vertical arrows are products of quadratic transforms along $\nu^*$ and $R_1 \rightarrow S_1$ is  monomial.
\end{Theorem}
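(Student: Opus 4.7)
The plan is to extend the strong local monomialization for two-dimensional defectless algebraic function fields over algebraically closed fields of positive characteristic (Theorems 7.3 and 7.35 of \cite{CP}) to the setting of excellent local rings. First, I would perform a preliminary reduction: using resolution of singularities for two-dimensional excellent local rings (\cite{Li}, \cite{CJS}) and the fact that quadratic transforms along $\nu^*$ preserve both domination and the quotient field, I may assume that both $R$ and $S$ are regular two-dimensional excellent local rings dominated by $\nu^*$. The characteristic zero case is already covered by \cite{C}, so the real content is when $R$ and $S$ have positive or mixed residue characteristic.

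Next, I would reduce to the algebraic function field setting of \cite{CP}. Using Cohen's structure theorem for the completions $\hat{R}$ and $\hat{S}$, together with a suitable choice of coefficient field (or Cohen subring) and of generators of the maximal ideals, I would exhibit algebraic local rings $A \subseteq R$ and $B \subseteq S$ in two-dimensional algebraic function fields $L \subseteq K$ and $L^* \subseteq K^*$ over an algebraically closed field $k$, chosen so that: (i) regular parameters of $A$ serve as regular parameters of $R$, similarly for $B$ and $S$; (ii) the extension $A \to B$ is finite separable of the same generic form as $R\to S$; and (iii) the restriction of $\nu^*$ to $L^*$ has defect zero over its restriction to $L$. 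The compatibility of quadratic transforms with such inclusions ensures that a sequence of quadratic transforms $A \to A_1$, $B \to B_1$ along the restricted valuation can be lifted to a corresponding sequence $R \to R_1$, $S \to S_1$ along $\nu^*$, and conversely the regular parameters produced on the algebraic side remain regular parameters on the excellent side.

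Applying Theorems 7.3 and 7.35 of \cite{CP} to $A \to B$ then produces, after finitely many further quadratic transforms along the restricted valuation, a monomial form as in equation (\ref{eqI1}) at the algebraic level. Transferring this monomial form to $R_1 \to S_1$ via the lifting in the preceding step yields the conclusion of the theorem, with the matrix of exponents controlled by the classical invariants $e$ and $f$ of $\nu^*/\nu$ via Ostrowski's formula, which applies precisely because the defect vanishes.

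The main obstacle is the algebraization step: constructing algebraic function field models inside excellent local rings that simultaneously capture the valuation $\nu^*$ with its defectless structure, admit compatible regular parameters, and remain compatible with the sequences of quadratic transforms along $\nu^*$. The excellence hypothesis is essential here, and so is the behavior of the defect under completion and henselization, since the defectless condition must propagate faithfully between the original rings, their completions, and the algebraic models. A secondary difficulty, needed so that the \cite{CP} results apply verbatim, is arranging that the residue field of the algebraic model is algebraically closed; this typically requires a preliminary enlargement of the coefficient field inside $\hat{R}$ and a verification that such an enlargement does not disturb the vanishing of the defect nor the existence of compatible quadratic transform sequences.
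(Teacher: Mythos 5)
Your proposed strategy --- algebraizing $R\to S$ to a function-field extension over an algebraically closed field and then quoting Theorems 7.3 and 7.35 of \cite{CP} --- is not the route the paper takes, and I believe there are genuine gaps that make it unlikely to work as stated. The paper itself flags the issue: in Section~\ref{SecSM} the author writes that ``we do not have coefficient fields in general in the situation of this paper, and the completions of local rings are no longer extensions of power series rings over a field,'' and for this reason gives a direct argument (via Proposition~\ref{Prop10}, Proposition~\ref{Prop32}, Corollary~\ref{Prop41}) replacing the stable-form analysis of \cite{CP}.

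The first gap is mixed characteristic. If $R$ has residue characteristic $p>0$ but characteristic~$0$, Cohen's theorem gives only a Cohen ring $W$ (a complete DVR), not a coefficient field, so $\hat R$ is a quotient of $W[[x,y]]$ and there is no subfield $k\subset\hat R$ mapping onto the residue field. Your ``algebraic local rings $A\subseteq R$'' in two-dimensional function fields over an algebraically closed field cannot be produced this way, and the \cite{CP} theorems simply do not apply in that setting. Since the theorem you are proving explicitly includes mixed-characteristic excellent local domains, this is fatal unless you handle that case separately.

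The second gap is that even in equicharacteristic $p>0$, the conditions you list as ``(i)--(iii)'' to be arranged --- in particular that the restricted valuation on $L^*$ over $L$ again has defect zero, that $e$ and $f$ match, and that the algebraic model is over an algebraically closed field --- are nontrivial to arrange simultaneously, and you give no argument for them. The defect, ramification index, and residue degree of an extension of valuations are not in general preserved under restriction to a subextension of function fields, nor under passage to an enlargement of the coefficient field; these are exactly the quantities that Ostrowski's formula $[K^*:K^s(\nu^*/\nu)]=efp^\delta$ ties to the geometry, so controlling them is the heart of the problem. The paper circumvents all of this by computing $[\mathrm{QF}(\hat S_1):\mathrm{QF}(\hat R_0)]$ directly in Proposition~\ref{Prop10} for the excellent local rings themselves and comparing with $efp^\delta$ via the splitting group and Abhyankar's Proposition II (page 498 of \cite{LU}), which the author verifies remains valid for excellent rings. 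Finally, even granting a monomial form for a model $A_1\to B_1$, you would still need a descent step: the quadratic transforms along the restricted valuation produced by \cite{CP} need to agree with quadratic transforms of $R$ and $S$ along $\nu^*$, and the monomial relation between the parameters of $A_1$ and $B_1$ must survive passage back to $R_1\to S_1$; this requires an argument that the inclusions $A\subset R$, $B\subset S$ are compatible with the relevant blowups, which you assert but do not establish. In short, the overall shape (reduce rational rank $2$ to Theorem~\ref{Theorem5}, reduce the $1$-dimensional-residue-field case to Zariski's main theorem, and concentrate on the rational rank~$1$ algebraic-residue-extension case) is forced by Abhyankar's inequality and matches the paper, but the proof of the crucial rational rank~$1$ case must be done intrinsically, not by algebraization, because the algebraization step has multiple unaddressed failure points.
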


The proof of the theorem actually produces stable strong monomialization.

We now define the defect of an extension of valuations. The role of this concept in local uniformization was  observed by Kuhlmann \cite{Ku1} and \cite{Ku2}.
A good introduction to the role of defect in valuation theory is given in \cite{Ku1}.
A brief survey which is well suited to our purposes is given in Section 7.1 of \cite{CP}.
Suppose that $K^*/K$ is a finite Galois extension of fields of characteristic $p>0$. The splitting field  $K^s(\nu^*/\nu)$ of $\nu$ is the smallest field between $K$ and   $K^*$ with the property  that $\nu^*$ is the only extension to $K^*$ of $\nu^*|L$. The defect $\delta(\nu^*/\nu)$ is defined by the identity
$$
[K^*:K^s(\nu^*/\nu)]=f(\nu^*/\nu)e(\nu^*/\nu)p^{\delta(\nu^*/\nu)}
$$
(Corollary to Theorem 25 , Section 12, Chapter VI \cite{ZS2}).
In the case when $K^*/K$ is only finite separable, we define the defect by
$$
\delta(\nu^*/\nu)=\delta(\nu'/\nu)-\delta(\nu'/\nu^*)
$$
where $\nu'$ is an extension of $\nu^*$ to a Galois closure $K'$ of $K^*$ over $K$.

The defect is equal to zero if the residue field $V_{\nu}/m_{\nu}$  has characteristic zero (Theorem 24, Section 12, Chapter VI \cite{ZS2}) or if $V_{\nu}$ is a DVR (Corollary to Theorem 21, Section 9, Chapter V \cite{ZS1}).

\subsection{Associated graded rings of  valuations.}

The semigroup of $R$ with respect to the valuation $\nu$ is 
$$
S^R(\nu)=\{\nu(f)\mid f\in R\setminus \{0\}\}.
$$
The group generated by $S^R(\nu)$ is the valuation group $\Gamma_{\nu}$ of $\nu$ which is well understood (\cite{M}, \cite{MS}, \cite{ZS2}, \cite{Ku2}); the semigroup 
can however be extremely complicated and perverse (\cite{CT1}, \cite{CV1}). 

The associated graded ring of $\nu$ on $R$, as defined in \cite{T1} and \cite{T2},  is 
$$
{\rm gr}_{\nu}(R)=\bigoplus_{\gamma\in S^R(\nu)}\mathcal P_{\gamma}(R)/\mathcal P^+_{\gamma}(R).
$$
Here $P_{\gamma}(R)$ is the ideal in $R$ of elements of value $\ge \gamma$ and $P_{\gamma}^+(R)$ is the ideal in $R$ of elements of value $>\gamma$. This ring plays an important role in Teissier's approach to resolution of singularity (it is completely realized for Abhyankar valuations in arbitrary  characteristic in \cite{T2}).

We always have that $\mbox{QF}({\rm gr}_{\nu}(R))=\mbox{QF}({\rm gr}_{\nu}(V_{\nu})$ and
$$
[\mbox{QF}({\rm gr}_{\nu^*}(S)):\mbox{QF}({\rm gr}_{\nu}(R))]=f(\nu^*/\nu)e(\nu^*/\nu).
$$

In \cite{GHK} and \cite{GK} it is proven that there exists a strong local monomialization $R_1\rightarrow S_1$ for defectless extensions $R\rightarrow S$ of two dimensional algebraic local rings in a two  algebraic function field over an algebraically closed field, which has the property that the induced extension of associated graded rings along the valuation 
\begin{equation}\label{eqI2}
{\rm gr}_{\nu}(R)\rightarrow {\rm gr}_{\nu^*}(S)
\end{equation}
is of finite type and is even a ``toric extension''. This result is extended in \cite{CV2} to the case of two dimensional algebraic function fields over an arbitrary field of characteristic zero. These proofs all make use of the technique of generating sequences of a valuation on a local ring, which is developed by Spivakovsky  in \cite{Sp} for two dimensional regular local rings with algebraically closed residue fields, and is extended in \cite{CV1} to arbitrary regular local rings of dimension two. Unfortunately, this technique is special to dimension two, and does not extend well to higher dimension local rings, or even to normal local rings of dimension two
(the examples of strange semigroups in \cite{CT1} and \cite{CV1}  show this). An interesting  construction of generating sequences within a valuation ring 
which exhibits the defect of an extension of valuations is given in \cite{Vaq}, and a different general construction of generating sequences is given in \cite{Mo}. 

In general, the extension (\ref{eqI2}) is not of finite type, even for equicharacteristic zero algebraic regular local rings of dimension two (Example 9.4 \cite{CV1}), so blowing up to reach a good stable form is required to obtain that (\ref{eqI2}) has a good form.

It is not difficult to show that the extension (\ref{eqI2}) is of finite type and is toric when $\nu^*$ is an Abhyankar valuation
(equality holds in Abhyankar's inequality (Theorem 1 \cite{Ab1})
$$
\mbox{trdeg}_{S/m_S}V_{\nu^*}/m_{\nu^*}+\dim_{\QQ}\Gamma_{\nu^*}\otimes \QQ\le \dim S.
$$

From a special case of  Theorem 5.1 \cite{C} (recalled in Theorem \ref{Theorem1} of this paper) we give the precise statement of the stable strongly monomial forms 
$R_1\rightarrow S_1$ obtained by a rational rank 1 valuation ($\dim_{\QQ}\Gamma_{\nu^*}\otimes \QQ = 1$) in the case when $R\rightarrow S$ is an extension of  algebraic local rings in an extension of algebraic function fields over an arbitrary field of  characteristic zero. In this case, there are regular parameters $x_1,\ldots, x_n$ in $R_1$ and regular parameters $y_1,\ldots,y_n$ in $S$ and a unit $\delta$ in $S_1$ such that 
$$
x_1=\delta y_1^{e}, x_2=y_2,\ldots,x_n=y_n
$$
where $e=e(\nu^*/\nu)=|\Gamma_{\nu^*}/\Gamma_{\nu}|$.
In this paper, we give a simple proof (Theorem \ref{Theorem4}) that  in the case when $R\rightarrow S$ is an extension of  algebraic local rings in an extension of algebraic function fields over an arbitrary field of  characteristic zero, and $\nu$ has rational rank 1  a strongly monomial extension $R_1\rightarrow S_1$ has the property that 
\begin{equation}\label{eqI4}
{\rm gr}_{\nu^*}(S_1)\cong \left({\rm gr}_{\nu}(R_1)\otimes_{R_1/m_{R_1}}S_1/m_{S_1}\right)[Z]/(Z^e-[\delta]^{-1}[x_1]),
\end{equation}
where $[\gamma_1], [x_1]$ are the respective classes in ${\rm gr}_{\nu}(R_1)\otimes_{R_1/m_{R_1}}S_1/m_{S_1}$.
The degree of the extension of quotient fields of ${\rm gr}_{\nu}(R)\rightarrow {\rm gr}_{\nu^*}(S)$ is $e(\nu^*/\nu)f(\nu^*/\nu)$,
where 
$$
f(\nu^*/\nu)=[V_{\nu^*}/m_{\nu^*}:V_{\nu}/m_{\nu}].
$$
In particular, the extension of associated graded rings along the valuation is finite and ``toric''.

We show in Theorem \ref{Theorem3} of this paper that the stable strongly monomial forms  found in Theorem \ref{Theorem7} of defectless extensions of two dimensional excellent local rings dominated by a valuation $\nu^*$ of  rational rank 1
have an extension of associated graded rings along the valuation of the form (\ref{eqI4}). Since stable forms of  Abhyankar valuations have a finite type ``toric extension'' as commented above, we conclude that stable strongly monomial forms of 
a defectless extension of two dimensional excellent local rings always has a finite type ``toric'' extension (\ref{eqI2}).

In contrast, we do not have such a nice stable form of the extension of associated graded rings along a valuation which has positive defect,
as is shown by the example of Theorem 38 \cite{CP}, analyzed in Section \ref{SectionCSM} of this paper. Using the notation of Section 7.4 \cite{CP}, explained in the following subsection on invariants of stable forms along a valuation,
it follows from Remark 7.44 \cite{CP}  that the graded domains $\mbox{gr}_{\nu^*}(S_{n})$ are integral but not finite
over $\mbox{gr}_{\nu}(R_{n})$ for all  $n$, in contrast to the situation when the defect $\delta(\nu^*/\nu)$ is zero in Theorem \ref{Theorem3}.
The quotient fields of 
$\mbox{gr}_{\nu}(R_{n})$ and $\mbox{gr}_{\nu^*}(S_{n})$  are equal under the extension of Theorem 38 \cite{CP}, so the degree is  $1=e(\nu^*/\nu)f(\nu^*/\nu)$
as in the conclusions of Theorem \ref{Theorem3}.

\subsection{Invariants of stable forms along a valuation.}

Suppose that  $R\rightarrow S$ is an inclusion of regular two dimensional algebraic local rings  within function fields $K$ and $K^*$ respectively, over an algebraically closed field $k$ of positive characteristic $p>0$, such that $K^*/K$ is finite separable, $S$ 
dominates $R$, and  there is a valuation $\nu^*$ of $K^*$ with restriction $\nu$ to $K$ which dominates $R$ and $S$, such that 
\begin{enumerate}
\item[1)] $\nu^*$ dominates $S$.
\item[2)] The residue field $V_{\nu^*}/m_{\nu^*}$ of $V_{\nu^*}$ is algebraic over $S/m_S$. 
\item[3)] The value group $\Gamma_{\nu^*}$ of $\nu^*$ has rational rank 1 (so it is isomorphic as an ordered group to a subgroup of $\QQ$).
\end{enumerate}

\vskip .2truein

It is shown in Corollary 7.30 and Theorem 7.33 \cite{CP} 
that there  are sequences of quadratic transforms along $\nu^*$ (each vertical arrow is a product of quadratic transforms) constructed by the algorithm of Section 7.4 \cite{CP}, where we have simplified notation, writing $R_n$ for $R_{r_n}$ and $S_n$ for $S_{s_n}$.

\begin{equation}\label{eq100}
\begin{array}{ccc}
V_{\nu}&\rightarrow &V_{\nu^*}\\
\uparrow&&\uparrow\\
\vdots&&\vdots\\
\uparrow&&\uparrow\\
R_{n}&\rightarrow &S_{n}\\
\uparrow&&\uparrow\\
\vdots&&\vdots\\
\uparrow&&\uparrow\\
R_{2}&\rightarrow&S_{2}\\
\uparrow&&\uparrow\\
R&\rightarrow&S
\end{array}
\end{equation}
where  each $R_{n}$ is an algebraic regular local ring of $K$ and $S_{n}$ is an algebraic regular local ring of $K^*$ such that $S_{n}$ dominates $R_{n}$,
and no quadratic transform of $R_{n}$ factors through $S_{n}$.  For $n\gg0$,
$R_{n}$ has regular parameters $x_{n}, y_{n}$ and $S_{n}$ has regular parameters $u_{n}, v_{n}$ such that there are ``stable forms''
\begin{equation}\label{eqI5}
u_{n}=\gamma_nx_{n}^{\overline a p^{\alpha_n}}, v_{n}=x_{n}^{b_n}f_n
\end{equation}
where $\gamma_n$ is a unit in $S_{n}$, $d_n=\overline \nu_{n}(f_n\mbox{ mod }x_{n})=p^{\beta_n}$, where $\overline \nu_{n}$ is the natural valuation of the DVR $S_{n}/x_{n}S_{n}$, with $b_n,\alpha_n,\beta_n\ge 0$ and $\alpha_n+\beta_n$ does not depend on $n$, $\overline a$ does not depend on $n$.

In Theorem 7.38 \cite{CP}, an example is given where $K^*/K$ is a tower of two Artin Schreier extensions with $\delta(\nu^*/\nu)=2$,
 $\overline a=1$, $\alpha_n=1$ for all $n$, $\beta_n=1$ for all $n$ and $b_n=0$ for all $n$. In particular, this shows that ``strong local monomialization'' fails for this extension.
However, it is also shown in the example that local monomialization is true for this extension (by considering different sequences of quadratic transforms above $R$ and $S$).

In Corollary 7.30 and Theorem 7.33 of \cite{CP}, it is shown that $\alpha_n+\beta_n$ is a constant for $n\gg 0$, where  $\alpha_n$ and $\beta_n$ are the integers defined above which are associated to the stable forms (\ref{eq100}) of an extension of valued two dimensional algebraic function fields.  If $\Gamma_{\nu}$ is not $p$-divisible, it is 
further shown that $\alpha_n$ and $\beta_n$ are both constant for $n\gg 0$, and $p^{\beta_n}$ is the defect of the extension.
However, if $\Gamma_{\nu}$ is $p$-divisible, then it is only shown that the sum $\alpha_n+\beta_n$ is constant for $n\gg 0$, and that 
$p^{\alpha_n+\beta_n}$ is the defect of the extension. In Remark 7.34 \cite{CP} it is asked if $\alpha_n$ and $\beta_n$ (and some other numbers computed from  the stable forms) are eventually constant in the case when $\Gamma_{\nu}$ is $p$-divisible. We give examples in Section \ref{SectionCSM}, equations (\ref{eq500}) - (\ref{eq503}),  showing that this is not the case, even within (defect) Artin Schreier extensions. The examples are found by considering a factorization of the example of Theorem 3.8 \cite{CP} into a product of two Artin Schreier extensions, and computing generating sequences on the intermediary rings.

\section{Notation and Preliminaries}\label{SecNoc}
\subsection{Local algebra.}  All rings will be commutative with identity. A ring $S$ is essentially of finite type over $R$ if $S$ is a local ring of a finitely generated $R$-algebra.
We will denote the maximal ideal of a local ring  $R$ by $m_R$, and the quotient field of a domain $R$ by $\mbox{QF}(R)$. 
(We do not require that a local ring be Noetherian). 
Suppose that $R\subset S$ is an inclusion of local rings. We will say that $S$ dominates $R$ if $m_S\cap R=m_R$.
If the local ring $R$ is a domain with $\mbox{QF}(R)=K$ then we will say that $R$ is a local ring of $K$. If $K$ is an algebraic function field over a field $k$ (which we do not assume to be algebraically closed) and a local ring $R$ of $K$ is essentially of finite type over $k$, then we say that $R$ is an algebraic local ring of $k$.  

Suppose that $K\rightarrow K^*$ is a finite field extension, $R$ is a local ring of $K$ and $S$ is a local ring of $K^*$. We will say that $S$ lies over $R$ if $S$ is a localization of the integral closure $T$ of $R$ in $K^*$. If $R$ is a local ring, $\hat R$ will denote the completion of $R$ by its maximal ideal $m_R$.

Suppose that $R$ is a regular local ring. A monoidal transform $R\rightarrow R_1$ of $R$ is a local ring of the form $R[\frac{P}{x}]_m$
where $P$ is a regular prime ideal in $R$ ($R/P$ is a regular local ring) and $m$ is a prime ideal of $R[\frac{P}{x}]$ such that
$m\cap R=m_R$. $R_1$ is called a quadratic transform if $P=m_R$.

\subsection{Valuation Theory}
Suppose that $\nu$ is a valuation on a field $K$. We will denote by $V_{\nu}$ the valuation ring of $\nu$:
$$
V_{\nu}=\{f\in K\mid \nu(f)\ge 0\}.
$$
We will denote the value group of $\nu$ by $\Gamma_{\nu}$. Good treatments of valuation theory are Chapter VI of \cite{ZS2} and \cite{RTM}, which contain references to the original papers.
If $\nu$ is a valuation ring of an algebraic function field over a field $k$, we  insist that $\nu$ vanishes on $k\setminus\{0\}$,
and say that $\nu$ is a $k$ valuation.

If $\nu$ is a valuation of a field $K$ and $R$ is a local ring of $K$ we will say that $\nu$ dominates $R$ if the valuation ring
$V_{\nu}$ dominates $R$. Suppose that $\nu$ dominates $R$. A monoidal transform $R\rightarrow R_1$ is called a monoidal transform along $\nu$ if $\nu$ dominates $R_1$.

Suppose that $K^*/K$ is a finite separable extension, $\nu^*$ is a valuation of $K^*$ and $\nu$ is the restriction of $\nu$ to $K$.
The ramification index is
$$
e(\nu^*/\nu)=|\Gamma_{\nu^*}/\Gamma_{\nu}|
$$
and reduced degree is
$$
f(\nu^*/\nu)=[V_{\nu^*}/m_{\nu^*}:V_{\nu}/m_{\nu}].
$$
The defect $\delta(\nu^*/\nu)$ is defined in the introduction to this paper. Its basic properties are developed  in  Section 11, Chapter VI \cite{ZS2},
\cite{Ku1} and
Section 7.1 of \cite{CP}. 

We will call a ring a DVR if it is a valuation ring with value group $\ZZ$.

\subsection{Galois theory of local rings} Suppose that $K^*/K$ is a finite Galois extension, $R$ is a local ring of $K$ and $S$ is a local ring of $K^*$ which lies over $R$. The splitting group $G^s(S/R)$, splitting field $K^s(S/R)=(K^*)^{G^s(S/R)}$  and inertia group $G^i(S/R)$   are defined and their basic properties developed in Section 7 of \cite{RTM}. 

\subsection{Galois theory of valuations}
The Galois theory of valuation rings is developed in Section 12 of Chapter VI of \cite{ZS2} and in Section 7 of \cite{RTM}.
Some of the basic results we need are surveyed in Section 7.1 \cite{CP}.
If we take $S=V_{\nu^*}$ and 
$R=V_{\nu}$ where $\nu^*$ is a valuation of $K^*$ and $\nu$ is the restriction of $\nu$ to $K$, then we obtain the splitting group
$G^s(\nu^*/\nu)$, the splitting field $K^s(\nu^*/\nu)$ and the inertia group $G^i(\nu^*/\nu)$. 
In Section 12 of Chapter VI of \cite{ZS2}, $G^s(\nu^*/\nu)$ is written as $G_Z$ and called the decomposition group.
$G^i(\nu^*/\nu)$ is written as $G_T$. The ramification group $G_V$ of $\nu^*/\nu$ is  defined in Section 12 of Chapter VI of \cite{ZS2} and is surveyed in Section 7.1 \cite{CP}. We  will denote this group by $G^r(\nu^*/\nu)$.  
\subsection{Semigroups and associated graded rings of a local ring with respect to a valuation}
Suppose that $\nu$ is a valuation of field $K$ which dominates a local ring $R$ of $K$. We will denote the semigroup of values of $\nu$ on $S$ by
$$
S^R(\nu)=\{\nu(f)\mid f\in R\setminus \{0\}\}.
$$
Suppose that $\gamma\in \Gamma_{\nu}$. We define ideals in $R$
$$
\mathcal P_{\gamma}(R)=\{f\in R\mid \nu(f)\ge 0\}
$$
and
$$
\mathcal P_{\gamma}^+(R)=\{f\in R\mid \nu(f)> 0\}
$$
and define (as in \cite{T1}) the associated graded ring of $R$ with respect to $\nu$ by
$$
{\rm gr}_{\nu}(R)=\bigoplus_{\gamma\in S^R(\nu)}\mathcal P_{\gamma}(R)/\mathcal P_{\gamma}^+.
$$
\subsection{Birational geometry of two dimensional regular local rings}
We recall some basic theorems  which we will make frequent use of.

\begin{Theorem}\label{ThmPre1}(Theorem  3 \cite{Ab1}) Suppose that $K$ is a field, and $R$ is a regular local ring of dimension two of $K$. Suppose that $S$ is another 2 dimensional regular local ring of $K$ which dominates $R$. Then there exists a unique sequence of quadratic transforms of $R$ 
$$
R\rightarrow R_1\rightarrow \cdots \rightarrow R_n=S
$$
of $R$ which factor $R\rightarrow S$.
\end{Theorem}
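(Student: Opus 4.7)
The plan is to construct the factoring chain inductively by iterated quadratic transforms, treating existence of each step, termination, and uniqueness separately.

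If $R=S$, the empty sequence works (trivially unique). Assume $R\subsetneq S$. The core step is to produce a single quadratic transform $R\to R_1$ with $R\subsetneq R_1\subseteq S$ and $S$ dominating $R_1$. For this I would show that, for any regular system of parameters $x,y$ of $R$, the extended ideal $m_RS=(x,y)S$ is principal in the UFD $S$; equivalently $y/x\in S$ or $x/y\in S$. Granted this (say $y/x\in S$), the ring $R_1:=R[y/x]_{m_S\cap R[y/x]}$ is a $2$-dimensional regular local ring, a quadratic transform of $R$, strictly contains $R$ (since $y/x\notin R$ because $x,y$ are coprime in $R$), and is contained in $S$ with $S$ dominating it.

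Establishing the principality of $m_RS$ is the technical heart of the proof and the main obstacle. Here the hypothesis $\mathrm{QF}(R)=\mathrm{QF}(S)$ is essential---without it one has classical counterexamples such as $R=k[x,y]_{(x,y)}\subsetneq k[[x,y]]=S$, in which $(x,y)S=m_S$ is not principal. My approach would begin by using $R\subsetneq S$ to find some $z\in S\setminus R$, written as $z=p/q$ with $p,q\in m_R$ coprime in the UFD $R$; then $(p,q)S=(q)S$ is automatically principal since $p=qz\in(q)S$. The harder part is transferring this principality from the ideal $(p,q)S$ to $(x,y)S$ for regular parameters $x,y$; this requires a careful argument exploiting that $(p,q)R$ is $m_R$-primary and that $R$ and $S$ have the same quotient field, allowing a suitable change of parameters.

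With $R_1$ in hand, iterate the construction to obtain $R\subsetneq R_1\subsetneq R_2\subsetneq\cdots\subseteq S$. Termination is shown by a numerical measure built from the divisorial valuation $\mathrm{ord}_S$ of $K$ (the $m_S$-adic order extended to all of $K$): for regular parameters $x_n,y_n$ of $R_n$ sorted by $\mathrm{ord}_S$-value, a suitable invariant (e.g.\ $\mathrm{ord}_S(x_n)+\mathrm{ord}_S(y_n)$, refined in edge cases) strictly decreases under each quadratic transform and is bounded below, so the chain halts at some $R_n=S$. Uniqueness is automatic since at each stage $R_{i+1}$ is uniquely determined as the local ring at the image of the closed point of $\mathrm{Spec}(S)$ under the unique lifting of $\mathrm{Spec}(S)\to\mathrm{Spec}(R_i)$ to $\mathrm{Bl}_{m_{R_i}}(\mathrm{Spec}(R_i))$ afforded by the principality of $m_{R_i}S$; induction then handles the entire sequence.
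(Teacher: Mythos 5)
The paper does not prove this statement; it is quoted verbatim (with attribution) as Theorem~3 of Abhyankar's ``On the valuations centered in a local domain''~\cite{Ab1}, so there is no in-paper proof to compare against.  Judged against the standard argument in the literature, your outline has the right global architecture (iterate the quadratic transform toward $S$, show each step stays inside $S$, argue termination, get uniqueness from the universal property of the blow-up), and the uniqueness step is fine.  However, the two points you yourself flag as ``the technical heart'' and as ``termination'' are exactly where the proposal has genuine gaps, and as written one of them is actually wrong.

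First, the principality of $m_R S$.  Your reduction to producing $z=p/q\in S\setminus R$ with $p,q\in m_R$ coprime, and observing that $(p,q)S=qS$ is principal, is a reasonable start, but the ``transfer'' from $(p,q)S$ principal to $(x,y)S$ principal is not a routine change of parameters.  Principality of $IS$ for one $m_R$-primary ideal $I$ does not propagate to other $m_R$-primary ideals: already for $R=k[x,y]_{(x,y)}$, $S=R[y/x]_{(x,y/x)}$ one has $(x,y)S=xS$ principal while $(x^2,y)S=x\,m_S$ is not.  The actual argument (in~\cite{Ab1}, or equivalently via Zariski's theory of complete ideals in a two-dimensional regular local ring) is substantially more involved and does not reduce to a linear change of the parameters $x,y$; as presented this is a missing proof, not a deferred routine verification.

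Second, and more seriously, your termination invariant fails.  The quantity $\mathrm{ord}_S(x_n)+\mathrm{ord}_S(y_n)$ can \emph{increase} under a quadratic transform: if the center of $S$ on $\mathrm{Bl}_{m_{R_n}}$ lies over the point $y_n/x_n=c$ with $c\ne 0$, the new parameters are $x_{n+1}=x_n$ and $y_{n+1}=y_n/x_n-\tilde c$, and $\mathrm{ord}_S(y_{n+1})=\mathrm{ord}_S(y_n-\tilde c\,x_n)-\mathrm{ord}_S(x_n)$, so the new sum equals $\mathrm{ord}_S(y_n-\tilde c\,x_n)$, which can be arbitrarily large compared with $\mathrm{ord}_S(x_n)+\mathrm{ord}_S(y_n)$.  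A ``refined in edge cases'' version would have to be a different invariant altogether.  The clean route, and the one consonant with how the paper uses Abhyankar's machinery, is to observe that each $R_{n+1}$ is the quadratic transform of $R_n$ along the divisorial valuation $\nu=\mathrm{ord}_S$, and then to invoke Lemma~12 of~\cite{Ab1} (quoted as Lemma~\ref{LemmaPre2}): if the chain $R\subsetneq R_1\subsetneq R_2\subsetneq\cdots$ were infinite, its union would be $V_\nu$, forcing $V_\nu\subseteq S$; since $S$ dominates $V_\nu$ and $V_\nu$ is a valuation ring of $K=\mathrm{QF}(S)$, this gives $S=V_\nu$, contradicting $\dim S=2$.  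You would also want to note that the center of $S$ on each blow-up is necessarily a closed point of the exceptional $\PP^1$ (not its generic point), again because a two-dimensional $S$ cannot dominate a valuation ring of its own fraction field; otherwise $R_{n+1}$ would fail to be a two-dimensional regular local ring.
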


\begin{Lemma}\label{LemmaPre2}(Lemma 12 \cite{Ab1}) Suppose that $A$ is a two dimensional regular local ring of a field $K$ and $\nu$ is a valuation of $K$ which dominates $\nu$.
Let 
$$
R\rightarrow R_1\rightarrow R_2\rightarrow \cdots
$$
be the infinite sequence of quadratic transforms along $\nu$. 
Then
$$
V_{\nu^*}=\cup_{i=1}^{\infty}R_i.
$$
\end{Lemma}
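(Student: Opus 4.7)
The plan is to prove both inclusions in the equality $V_{\nu} = \bigcup_i R_i$ (reading the conclusion with $V_\nu$ in place of $V_{\nu^*}$, since only a single valuation $\nu$ is in play in this lemma). The inclusion $\bigcup_i R_i \subseteq V_{\nu}$ is immediate, because every quadratic transform in the tower is by construction dominated by $\nu$, so each $R_i \subseteq V_{\nu}$. For the reverse inclusion, I would take $f \in V_{\nu}$ and write $f = g/h$ with $g, h \in R \setminus \{0\}$ and $\nu(g) \geq \nu(h)$. The strategy is to reduce to the case where $g$ and $h$ are simultaneously monomial, in the regular parameters of some $R_i$, up to units. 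Once both are monomial, further quadratic transforms concentrate them as pure powers of the parameter of smallest $\nu$-value: each quadratic transform sends the ``larger-value'' parameter $y$ to $x(y' + c)$, folding its exponent into that of $x$, so eventually in some $R_k$ we have $g = u\, x_k^A$ and $h = v\, x_k^B$ with units $u, v$. The inequality $\nu(g) \geq \nu(h)$ then forces $A \geq B$, giving $f = (u/v)\, x_k^{A - B} \in R_k$.

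The core technical step is therefore the monomialization of a single nonzero element $a \in R$: there exists an index $i$ such that $a = u\, x_i^{\alpha} y_i^{\beta}$ in $R_i$ for some regular parameters $x_i, y_i$ of $R_i$ and a unit $u \in R_i$. I would argue by induction on $m = \mathrm{ord}_R(a)$. Writing $a = a_m + a_{m+1} + \cdots$ as a sum of homogeneous forms in regular parameters $(x, y)$ of $R$ with $\nu(x) \leq \nu(y)$, the quadratic transform $R \to R_1$ extracts a factor of $x^m$, and the leading form of $a / x^m$ in $R_1$ is governed by $a_m(1, y_1 + c)$, where $c$ is the residue class of $y/x$ (with $c = 0$ if $\nu(y) > \nu(x)$). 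Unless $a_m$ is a scalar multiple of $(y - cx)^m$, one has $\mathrm{ord}_{R_1}(a / x^m) < m$, and the inductive hypothesis applies.

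The main obstacle is the borderline case, in which $a_m = \lambda (y - cx)^m$, so that the order does not drop in a single quadratic transform. Here one must exploit a secondary invariant --- effectively, the first homogeneous component $a_{m+j}$ not proportional to the corresponding power of $(y - cx)$ --- and show that this datum strictly decreases along the tower $R_i \to R_{i+1}$, using essentially that $\nu$ distinguishes the contribution of this next term. This forces termination in finitely many steps. With monomialization of a single element established, the reduction to pure powers of the smallest-value regular parameter and the final exponent comparison $A \geq B$ are formal, completing the proof.
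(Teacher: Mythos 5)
The paper gives no proof of this lemma: it is quoted directly as Lemma 12 of \cite{Ab1}, so there is no internal argument to compare against. Your strategy --- reduce to monomializing a single element of $R$ along the tower, then compare exponents --- is the right one and is what underlies Abhyankar's proof. But the proposal has a genuine gap precisely where the theorem's content lives. The single-element monomialization is exactly embedded resolution of plane curves inside a two-dimensional regular local ring (Theorem 2 of \cite{Ab1}, cited elsewhere in this same section of the paper), and the ``borderline case'' $a_m = \lambda(y-cx)^m$ that you flag as ``the main obstacle'' is not a loose end but the entire theorem: establishing that a secondary invariant strictly decreases, and that the process terminates even though the residue field of $R_i$ may grow at each step (the residue $c$ can be algebraic of arbitrarily high degree over $R/m_R$), requires a real argument that is not given.

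A secondary issue: once $g$ and $h$ are monomial, the claim that further quadratic transforms ``fold'' the exponents into pure powers of a single parameter only holds when the residue $c$ of the ratio of parameters is nonzero, which never happens when $\nu(x)$ and $\nu(y)$ are $\QQ$-independent; in that case the element stays a genuine two-variable monomial forever. A cleaner finish that sidesteps this is to resolve the ideal $I = (g,h)R$ rather than $g$ and $h$ separately: along $\nu$ one reaches some $R_i$ with $IR_i = (h')$ principal, so $g = ah'$ and $h = bh'$ with $(a,b)R_i = R_i$. Since $\nu$ dominates $R_i$ and is nonnegative on it, $\min(\nu(a),\nu(b)) = 0$; the hypothesis $\nu(g)\ge\nu(h)$ gives $\nu(a)\ge\nu(b)$, hence $\nu(b)=0$, hence $b$ is a unit, and $f = a/b \in R_i$. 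You should also note that the ``infinite sequence'' hypothesis is essential: it is equivalent to $V_\nu/m_{\nu}$ being algebraic over $R/m_R$, and the conclusion is false for divisorial valuations, where the tower terminates at a ring whose localization at a height-one prime, not the ring itself, is $V_\nu$.
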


We also make use of the fact that ``embedded resolution of singularities '' is true within a regular local ring of dimension 2 (Theorem 2 \cite{Ab1}),
and the fact that resolution of singularities is true for two dimensional excellent local rings (\cite{Li}, \cite{CJS}).

\section{Local monomialization of  two dimensional defectless extensions}\label{SecSM}

In this section we prove Theorem \ref{Theorem7}, establishing local monomialization for defectless extensions of two dimensional excellent local domains.
This extends the result for extensions of two dimensional algebraic local rings in two dimensional algebraic function fields over an algebraically closed field in Theorem 7.3 and 7.35 \cite{CP}.

We indicate the differences between the proof in \cite{CP} of the analogue of Theorem \ref{Theorem7} for algebraic local rings over an algebraically closed field, and the proof of Theorem \ref{Theorem7} in this paper. The essential case is of rational rank 1 valuations (Theorem \ref{Theorem2}).
An essential ingredient in the proof is the computation of complexity in Proposition \ref{Prop10}, which generalizes Proposition 7.2 \cite{CP}. The steps of the proof are the same, but some of the individual calculations  require different methods, as we do not have coefficient fields in general in the situation of this paper, and the completions of local rings are no longer extensions of power series rings over a field. In \cite{CP}, the analogue of Theorem \ref{Theorem2} is deduced as a consequence of a detailed analysis of stable forms (Theorem 7.33 \cite{CP}) which makes essential use of the assumption that there is no residue field extension (the ground field is algebraically closed). In this paper, we give a different, more direct argument to deduce Theorem \ref{Theorem2}.

The proof of  Theorem \ref{Theorem7} actually produces stable strong monomialization. We first establish strong monomialization in the two 
essential cases of the theorem, and give the proof of Theorem \ref{Theorem7} at the end of this section.
We will make use of the list of good properties of excellent rings given in Scholie 7.8.3 \cite{EGAIV}.

\subsection{Degree formulas} In this subsection we generalize the formulas of Proposition 7.2 \cite{CP}.

\begin{Proposition}\label{Prop10}   Suppose that $R$ and $S$ are two dimensional regular excellent local rings such that $S$ dominates $R$ and 
$K^*=\mbox{QF}(S)$ is a finite separable extension of $K=\mbox{QF}(R)$, $R$ has a regular system of parameters $u,v$ and $S$ has a regular system of parameters $x,y$ such that there is an expression
$$
u=\gamma x^a, v=x^bf
$$
where $a>0$, $b\ge 0$, $\gamma$ is a unit in $S$, $x\not\,\mid f$ in $S$ and $f$ is not a unit in $S$.
Then there exist inclusions
$$
R\rightarrow R_0\rightarrow S
$$
of local rings  with the following properties: $R_0$ is a two dimensional  normal local ring of $K$ which is essentially of finite type over $R$  such that $S$ lies above $R_0$. Let $\overline \nu$ be the natural valuation of the DVR $S/xS$. Then we have 
$$
[\mbox{QF}(\hat S):\mbox{QF}(\hat R_0)]=ad[S/m_S:R/m_R]
$$
where $d=\overline\nu(f\mbox{ mod } x)$.
\end{Proposition}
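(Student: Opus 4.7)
The plan is to construct $R_0$ explicitly from the given data, verify its basic properties, and compute the degree via a length-ratio formula for module-finite extensions of Cohen--Macaulay local rings. I set $w = v^a u^{-b} \in K$; by the given expressions this equals $\gamma^{-b} f^a \in S$, so $R[w] \subset S$. I take $R_0$ to be the localization of the normalization of $R[w]$ at the maximal ideal dominated by $S$. Since $R[w]$ has a presentation $R[W]/(u^b W - v^a)$, it is a $2$-dimensional hypersurface, and its normalization is finite by excellence (Scholie 7.8.3 of \cite{EGAIV}); thus $R_0$ is a $2$-dimensional normal local ring of $K$, essentially of finite type over $R$, with residue field $R/m_R$ (the construction stays inside $K$). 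In the degenerate case $b = 0$, take $R_0 = R$.

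To see that $S$ lies over $R_0$, it suffices to show $R_0 \rightarrow S$ is module-finite. The fiber $S/m_{R_0}S$ is Artinian: $u = \gamma x^a \in m_{R_0}$ forces $x$ nilpotent modulo $m_{R_0}S$ (since $\gamma$ is a unit in $S$); $w = \gamma^{-b} f^a \in m_{R_0}$ forces $f$ nilpotent; and $f$ is congruent modulo $x$ to a unit times $y^d$, so $y^d$ and hence $y$ is nilpotent in the fiber. Thus $m_S \subseteq \sqrt{m_{R_0}S}$, the fiber is zero-dimensional, and module-finiteness follows from $S$ being essentially of finite type over $R_0$. Therefore $S$ is a localization of the integral closure of $R_0$ in $K^*$ at a maximal ideal.

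For the degree, module-finiteness of $R_0 \rightarrow S$ yields module-finiteness of $\hat R_0 \rightarrow \hat S$, and since both completions are Cohen--Macaulay of dimension $2$, for any parameter ideal $I$ of $\hat R_0$ the associativity formula for multiplicities gives
$$
[\mbox{QF}(\hat S):\mbox{QF}(\hat R_0)] = \frac{\mbox{length}_{\hat S}(\hat S / I\hat S)}{\mbox{length}_{\hat R_0}(\hat R_0 / I)} \cdot [\hat S/m_{\hat S} : \hat R_0/m_{\hat R_0}].
$$
Taking $I = (u, w)\hat R_0$, the explicit expressions $u = \gamma x^a$ and $w = \gamma^{-b} f^a$ in $\hat S$, together with the fact that $(x, f)\hat S$ has colength $d$ in $\hat S$ (since $\overline \nu(f \mbox{ mod } x) = d$), let one compute each length and show their ratio is $ad$. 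The residue-field factor equals $[S/m_S:R/m_R]$ because $R_0 \subset K$ has the same residue field as $R$, yielding the claimed formula.

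The main obstacle is the length computation in $\hat S$ and $\hat R_0$ without invoking a coefficient field, since the paper is set in the fully general excellent (possibly mixed-characteristic) setting. A Weierstrass-style argument internal to the complete regular local ring $\hat S$ is needed, using that $x, y$ form a regular system of parameters and that $f$ is ``distinguished in $y$'' in the sense that its image modulo $x$ has order $d$. The hypersurface structure of $R[w]$ and the behaviour of its normalization under completion also need care, so that the parameter ideal $(u, w)$ has the expected colength in $\hat R_0$ and the residue field of $R_0$ indeed remains $R/m_R$.
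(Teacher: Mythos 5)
Your construction of $R_0$ and your general strategy are reasonable, but the proposal stops at the crux of the argument, which you yourself flag as ``the main obstacle.'' A few specific remarks.

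Your $R_0$ is the normalization of $R[w]$ with $w = v^a/u^b$. The paper instead sets $s=\gcd(a,b)$, $\phi = v^{a/s}/u^{b/s}$, and takes $R_0$ to be the localization of $R\bigl[\tfrac{1}{u^{b/s}}I\bigr]$, where $I$ is the integral closure of $(u^{b/s},v^{a/s})$. This is already normal by the Zariski--Samuel theorem that products of integrally closed ideals in a two-dimensional regular local ring are integrally closed, so no separate normalization step is needed; more to the point, the explicit monomial description makes it possible to identify $R_0/\mathfrak p$ (where $\mathfrak p = xS\cap R_0$) as $(R/m_R[\overline\phi])_{(\overline\phi)}$ and, from this, to read off $R_0/m_{R_0}=R/m_R$. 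Your assertion that $R_0$ ``stays inside $K$'' and hence has residue field $R/m_R$ is not a proof: normalization can enlarge the residue field, and this needs an argument. (In fact your $R_0$ and the paper's coincide after localization, since $\phi$ is integral over $R[\phi^s]=R[w]$ inside $K$, but you would still have to do the work the paper does to pin down the residue field.)

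The real gap is in the degree computation. The paper reduces to a \emph{one-dimensional} problem: it takes the DVR $W^*=S_{(x)}$ and its contraction $W=W^*\cap K=(R_0)_{\mathfrak p}$, computes $e(W^*/W)=s$ from the monomial description of $I$, computes $f(\hat W^*/\hat W)=\tfrac{ad}{s}[S/m_S:R/m_R]$ using the explicit form of $R_0/\mathfrak p$ and the structure theory of complete DVRs from Serre's \emph{Corps Locaux}, and then applies the product formula $[\mathrm{QF}(\hat S):\mathrm{QF}(\hat R_0)] = e(\hat W^*/\hat W)\,f(\hat W^*/\hat W)$ (Theorem 20 of Zariski--Samuel II), having first checked that $\hat W^*$ is the unique DVR of $\mathrm{QF}(\hat S)$ over $\hat W$. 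Your route instead invokes the associativity/projection formula for multiplicities of the parameter ideal $(u,w)$ across the finite CM extension $\hat R_0\to\hat S$. The formula you quote is correct, and the approach is in principle viable, but the two colengths $\mathrm{length}_{\hat S}(\hat S/(u,w)\hat S)$ and $\mathrm{length}_{\hat R_0}(\hat R_0/(u,w))$ are not computed, and computing the latter requires actual knowledge of $\hat R_0$, which your proposal has deferred. You also do not verify that $\hat R_0$ and $\hat S$ are domains (needed for ``rank equals degree of the fraction-field extension''); the paper handles this by observing that $R_0$ and $S$ are excellent and normal, hence analytically normal. Until these lengths are actually computed and the completion behaviour is nailed down, the proof is an outline of a possible alternative, not a proof; the paper's passage through the height-one prime $\mathfrak p$ is precisely what makes the computation tractable without coefficient fields.
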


\begin{proof} First suppose that $b>0$. Let $s=\mbox{gcd}(a,b)$, $I$ be the ideal which is the integral closure of 
$(u^{\frac{b}{s}}, v^{\frac{a}{s}})$ in $R$. Let 
$$
\phi=\frac{v^{\frac{a}{s}}}{u^{\frac{b}{s}}}
$$
and
$$
R_0=R\left[\frac{1}{u^{\frac{b}{s}}}I\right]_{m_S\cap R\left[\frac{1}{u^{\frac{b}{s}}}I\right]}.
$$
$R_0$ is normal since $I$ is integrally closed. (Powers of $I$ are integrally closed by Theorem 2', Appendix 5 \cite{ZS2}, so the ring $\bigoplus_{n\ge 0}I^n$ is integrally closed and $R_0$ is a local ring of the normal scheme $\mbox{Proj}(\bigoplus_{n\ge 0}I^n)$). 
The elements
$x^a,\phi\in m_{R_0}S$ so $m_{R_0}S$ is $m_S$-primary.

By Lemma 9 \cite{Ab1}, there exists a rank 2 valuation $\nu$ of $K^*$ which dominates $S$. Let $A$ be the integral closure of $R_0$ in $K^*$ and let $T=A_{m_S\cap A}$.  By Theorem 1 \cite{Ab1}, $V_{\nu}/m_{\nu}$ is finite over $T/m_T$ so $S/m_S$ is finite over $T/m_T$. $T$ is normal of dimension two (since $T$ is excellent and normal). Since $m_TS$ is $m_S$-primary, we have that $T=S$ by the version of Zariski's main theorem in (10.7) \cite{RES}. 
Thus $S$ lies over $R_0$.

Let $W^*$ be the DVR $W^*= S_{(x)}$. $\phi\in R_0$ is neither a unit nor divisable by $x$ in $S$. Thus the prime ideal $\mathfrak p=xS\cap R_0$ has height one in $R_0$. Thus 
 $$
W=(R_0)_{\mathfrak p}=W^*\cap K
$$
is a  DVR. 

Let $t$ be a regular parameter in $W$. $W$ is the valuation ring of the valuation $\mbox{ord}_t$. Since $I$ is a monomial ideal,
the value group $\ZZ$ of $\mbox{ord}_t$ is generated by $\mbox{ord}_tu$ and $\mbox{ord}_tv$. Thus $\mbox{gcd}(\mbox{ord}_tu,\mbox{ord}_tv)=1$.
Since $\mbox{ord}_t\phi=0$, we have that $\mbox{ord}_tu=\frac{a}{s}$ and $\mbox{ord}_tv=\frac{b}{s}$. 

We have that 
$$
IW=u^{\frac{b}{s}}W=(t^{\frac{a}{s}\frac{b}{s}}).
$$
Since 
$$
IW^*=(x^{\frac{ab}{s}}),
$$
we have that  $tW^*=(x^s)$, so the ramification index of $W^*/W$ is
$$
e(W^*/W)=s.
$$
The integrally closed ideal $I$ is generated by all monomials $u^mv^n$ such that 
$$
\frac{m}{b/s}+\frac{n}{a/s}\ge 1.
$$
Since
$$
\mbox{gcd}(\frac{a}{s},\frac{b}{s})=1,
$$
we have that 
\begin{equation}\label{eq521}
\mbox{ord}_x(u^mv^n)=ma+nb>\frac{ab}{s}
\end{equation}
for  such a monomial provided 
$$
(m,n)\not\in \{(\frac{b}{s},0),(0,\frac{a}{s})\}.
$$
Thus for $u^mv^n\in I$,
$$
\frac{u^mv^n}{u^{\frac{b}{s}}}\in \mathfrak p
$$
unless 
$$
u^mv^n=u^{\frac{b}{s}}\mbox{ or }u^mv^n=v^{\frac{a}{s}}.
$$
Since $\overline \nu(\phi)=\frac{ad}{s}>0$, we have that 
\begin{equation}\label{eq522*}
R_0/\mathfrak p\cong (R/m_R[\overline\phi])_{(\overline\phi)}
\end{equation}
where $\overline \phi$ is the residue of $\phi$ in $R_0/\mathfrak p$, and
\begin{equation}\label{eq540}
R_0/m_{R_0}=R/m_R.
\end{equation}

By Corollary 1 of Section 2, Chapter II  of Local Fields \cite{S} and (ii) of Theorem 1, Section 3 of Chapter II \cite{S}, and by (\ref{eq540}) and  the fact that $\overline\phi$ is a regular parameter in $\widehat{R_0/\mathfrak p}$ by (\ref{eq522*}), 
we have that the inclusion
$$
\widehat{R_0/\mathfrak p}\rightarrow \widehat{S/xS}
$$
is a finite extension of complete DVRs, and
$$
\begin{array}{lll}
[\mbox{QF}(\widehat{S/xS}):\mbox{QF}(\widehat{R_0/\mathfrak p})]&=&e(\widehat{S/xS}/\widehat{R_0/\mathfrak p})f(\widehat{S/xS}/\widehat{R_0/\mathfrak p})\\
&=&e(S/xS/R_0/\mathfrak p)f(S/xS/R_0/\mathfrak p)=\frac{ad}{s}[S/m_S:R/m_R].\end{array}
$$

We have that $m_RS\subset \mathfrak p S$ so $xS$ is the only prime ideal of $S$ lying over $\mathfrak p$. Further, $\mathfrak p\hat R_0$ and $x\hat S$ are prime ideals since $R_0/\mathfrak p$ and $S/xS$ are regular local rings. $\hat R_0$ and $\hat S$ are normal since $R_0$ and $S$ are normal and excellent. Also,  $\hat S$ is a finite extension of $\hat R_0$ by (10.13) and (10.2) of \cite{RES}, since $S$ lies over  $R_0$ . Thus
$\mbox{QF}(\hat S)$ is a finite field extension of $\mbox{QF}(\hat R_0)$ and $\hat S$ is the integral closure of $\hat R_0$ in $\mbox{QF}(\hat S)$. We obtain that $\hat W^*=\hat S_{x\hat S}$ is the unique DVR of $\mbox{QF}(\hat S)$ which dominates $\hat W=(\hat R_0)_{\mathfrak p \hat R_0}$.

Thus, by Theorem 20, page 60 \cite{ZS2},
$$
[\mbox{QF}(\hat S):\mbox{QF}(\hat R_0)]=e(\hat W^*/\hat W)f(\hat W^*/\hat W).
$$
We have that 
$$
e(\hat W^*/\hat W)=e(W^*/W)=s
$$
since $R_0$ and $S$ are analytically unramified (as they are excellent), and
$$
f(\hat W^*/\hat W)=[\mbox{QF}(\hat S/x\hat S):\mbox{QF}(\hat R_0/\mathfrak p\hat R_0)]
=\frac{ad}{s}[S/m_S:R/m_R].
$$
Thus
$$
[\mbox{QF}(\hat S):\mbox{QF}(\hat R_0)]=ad[S/m_S:R/m_R].
$$

Now suppose that $b=0$. Then taking $R_0=R$, $W=R_{uR}$ and $W^*=S_{xS}$, a simpler variant of the above proof shows that
$e(\hat W^*/\hat W)=a$, $f(\hat W^*/\hat W)=d[S/m_S:R/m_R]$ and 
$$
[\mbox{QF}(\hat S):\mbox{QF}(\hat R)]=ad[S/m_S:R/m_R].
$$

\end{proof}

\begin{Proposition}\label{Prop11}   Suppose that $R$ and $S$ are two dimensional excellent regular local rings, such that $S$ dominates $R$ and 
$K^*=\mbox{QF}(S)$ is a finite separable extension of $K=\mbox{QF}(R)$, $R$ has a regular system of parameters $u,v$ and $S$ has a regular system of parameters $x,y$ such that there is an expression
$$
u=\gamma_1 x^ay^c, v=\gamma_2 x^by^d
$$
where  $\gamma_1$, $\gamma_2$ are  units in $S$ and $ad-bc\ne 0$.
Then there exist inclusions
$$
R\rightarrow R_0\rightarrow S
$$
of local rings  with the following properties: $R_0$ is a two dimensional  normal local ring of $K$ which is essentially of finite type over $R$  such that $S$ lies above $R_0$ and 
$$
[\mbox{QF}(\hat S):\mbox{QF}(\hat R_0)]=|ad-bc|[S/m_S:R/m_R].
$$
\end{Proposition}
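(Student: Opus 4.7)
The plan is to follow the template of the proof of Proposition \ref{Prop10}, extending the construction of $R_0$ by using two auxiliary elements instead of one, so as to capture both the $x$- and $y$-dependence of the monomials $u$ and $v$. Without loss of generality assume $ad - bc > 0$ (otherwise interchange $u$ and $v$, which negates the determinant). If either $b = 0$ or $c = 0$---say $c = 0$, so that $u = \gamma_1 x^a$ and $v = \gamma_2 x^b y^d$ with $d > 0$---the hypothesis of Proposition \ref{Prop10} is satisfied with $f = \gamma_2 y^d$ (so $x \nmid f$ in $S$ and $\overline\nu(f \bmod x) = d$), and the conclusion gives $[\mbox{QF}(\hat S):\mbox{QF}(\hat R_0)] = ad[S/m_S:R/m_R] = |ad-bc|[S/m_S:R/m_R]$; the case $b = 0$ follows after swapping $x$ and $y$.

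In the main case $b, c > 0$ (hence $a, d > 0$), set $s = \gcd(a,b)$ and $t = \gcd(c,d)$, and introduce
$$
\phi = v^{a/s}/u^{b/s}, \qquad \psi = u^{d/t}/v^{c/t},
$$
both elements of $K$. Substituting the given monomial expressions yields, inside $S$,
$$
\phi = \gamma_2^{a/s}\gamma_1^{-b/s}\, y^{(ad-bc)/s} \qquad \text{and} \qquad \psi = \gamma_1^{d/t}\gamma_2^{-c/t}\, x^{(ad-bc)/t},
$$
so $\phi$ and $\psi$ are units of $S$ times positive pure powers of $y$ and $x$, respectively. Define $R_0$ as the localization at $m_S \cap \overline{R[\phi,\psi]}$ of the integral closure of $R[\phi,\psi]$ in $K$. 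Then $R_0$ is a two-dimensional normal local ring of $K$, essentially of finite type over $R$, with $R \subset R_0 \subset S$. Because $\phi, \psi \in m_{R_0}$ contribute positive pure powers of $y$ and $x$ respectively to $m_{R_0}S$, this ideal is $m_S$-primary; combined with Lemma 9 \cite{Ab1} and Zariski's main theorem as stated in (10.7) \cite{RES}, arguing exactly as in Proposition \ref{Prop10} shows that $S$ lies above $R_0$.

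For the degree formula, I would mimic the DVR analysis at the end of the proof of Proposition \ref{Prop10}. Take $W^* = S_{(x)}$, set $\mathfrak{p} = xS \cap R_0$, and let $W = W^* \cap K = (R_0)_{\mathfrak p}$. Because $\psi \in \mathfrak{p}$ while $\phi \notin \mathfrak{p}$, the prime $\mathfrak{p}$ has height one in $R_0$, $W$ is a DVR, and the image of $\phi$ in $R_0/\mathfrak{p}$ serves as a regular parameter, analogous to the role of $\overline{\phi}$ in Proposition \ref{Prop10}. Computing the ramification index $e(\hat W^*/\hat W)$ and residue degree $f(\hat W^*/\hat W)$ from the $x$-adic orders $\mbox{ord}_x u = a$, $\mbox{ord}_x v = b$, $\mbox{ord}_x \psi = (ad-bc)/t$ and from the structure of the finite extension $\widehat{R_0/\mathfrak{p}} \hookrightarrow \widehat{S/xS}$ gives their product $|ad-bc|[S/m_S:R/m_R]$. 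The principal technical obstacle is the careful bookkeeping of these invariants---in particular, verifying that $\mathfrak{p}$ has height one in the normalization $\overline{R[\phi,\psi]}$ and that the residue field extension has the expected degree, generalizing the $(s,\, ad/s)$-decomposition appearing in Proposition \ref{Prop10}.
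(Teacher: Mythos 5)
Your proposal follows essentially the paper's route: reduce to Proposition \ref{Prop10} when some exponent vanishes, and otherwise introduce $\phi = v^{a/s}/u^{b/s}$ and $\psi = u^{d/t}/v^{c/t}$, build a two-dimensional normal ring $R_0$ of $K$ containing them, show $S$ lies over $R_0$ via the $m_S$-primariness of $(\phi,\psi)S$ together with Abhyankar's lemma and Zariski's main theorem, and read off the degree from the DVR $W = (R_0)_{\mathfrak p}$ with $\mathfrak p = xS \cap R_0$. The one place you differ is in how $R_0$ is produced: you take the abstract normalization of $R[\phi,\psi]$, while the paper takes $R_0 = R\bigl[II'/(u^{b/s}v^{c/s'})\bigr]$ localized at $m_S$, where $I$ and $I'$ are the integral closures of the monomial ideals $(u^{b/s},v^{a/s})$ and $(u^{d/s'},v^{c/s'})$ (with $s'=t$ in your notation); this ring is normal directly by the Zariski--Samuel theorem that products of integrally closed ideals in a two-dimensional regular local ring are integrally closed. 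In fact the two constructions yield the same ring, since every generating monomial of $II'/(u^{b/s}v^{c/s'})$ is integral over $R[\phi,\psi]$ and conversely the paper's ring is a normal overring of $R[\phi,\psi]$ inside $K$; but the explicit monomial description is what makes the rest of the argument go. By listing the generators modulo $\mathfrak p$ and using the strict inequality (\ref{eq521}) one sees directly that $R_0/\mathfrak p \cong (R/m_R[\overline\phi])_{(\overline\phi)}$ with $R_0/m_{R_0}=R/m_R$, and by identifying $W$ with the DVR $R\bigl[I/u^{b/s}\bigr]_{\mathfrak p'}$ already analyzed in Proposition \ref{Prop10} one obtains $e(W^*/W)=s$; the degree formula then follows from the $e\cdot f$ formula for complete DVRs. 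Those are exactly the computations you acknowledge deferring, and with only the abstract normalization in hand they are not available by inspection --- so the heart of the proof is still to be supplied. One small aside: the height-one conclusion for $\mathfrak p$ rests on $\phi \in m_{R_0}\setminus\mathfrak p$, which gives $0 \subsetneq \mathfrak p \subsetneq m_{R_0}$ in the two-dimensional normal domain $R_0$; the observation $\psi\in\mathfrak p$ alone only shows $\mathfrak p\neq 0$.
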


\begin{proof} The conclusions of this proposition follow from Proposition \ref{Prop10} if one of $a,b,c,d$ is zero, so we may assume that $a,b,c,d$ are all positive. After possibly interchanging $x$ and $y$, it can be assumed that $ad-bc>0$. The proof of this proposition is a generalization of the proof of Proposition \ref{Prop10}, and we give an outline of the proof, indicating the essential differences.

Let $s=\mbox{gcd}(a,b)$ and $s'=\mbox{gcd}(c,d)$. Let $I\subset R$ (respectively $I'\subset R)$ be the integral closure of the the ideal $(u^{\frac{b}{s}}, v^{\frac{a}{s}})$ (respectively $(u^{\frac{d}{s'}}, v^{\frac{c}{s'}})$).
We have that
$$
R_0=R\left[\frac{II'}{u^{\frac{b}{s}}v^{\frac{c}{s'}}}\right]_{m_S\cap R\left[\frac{II'}{u^{\frac{b}{s}}v^{\frac{c}{s'}}}\right]}
$$
is a normal local ring (products of integrally closed ideals in a 2-dimensional regular local ring are integrally closed by Theorem 2', Appendix 5 \cite{ZS2}).

Let $\mathfrak p=xS\cap R_0$. Let $W^*$ be the DVR $W^*= S_{(x)}$ and let $W$ be the DVR $W= (R_0)_{\mathfrak p}=W^*\cap K$.
Let 
$$
\mathfrak p'=(xS)\cap R\left[\frac{I}{u^{\frac{b}{s}}}\right]
$$
and let
$$
\phi=\frac{v^{\frac{a}{s}}}{u^{\frac{b}{s}}}.
$$
$\phi\not \in\mathfrak p'$, so $\mathfrak p'$ is a height one prime in $R\left[\frac{I}{u^{\frac{b}{s}}}\right]$. Thus
$$
W''=R\left[\frac{I}{u^{\frac{b}{s}}}\right]_{\mathfrak p'}
$$
is a DVR dominated by $W^*$. Thus $W=W''$. As in the proof of Proposition \ref{Prop10}, we conclude that 
$$
e(W^*/W)=s
$$
and that 
$$
R_0/\mathfrak p\cong (R/m_R[\overline \phi])_{(\overline \phi)}
$$
where $\overline \phi$ is the  residue of $\phi$ in $R_0/\mathfrak p$. We have that
$$
\overline\nu(\phi)=\frac{ad-bc}{s}
$$
where $\overline\nu$ is the natural valuation of $S/xS$,
and $R_0/m_{R_0}\cong R/m_R$. We calculate (as in the proof of Proposition \ref{Prop10}) that
$$
f(\hat W^*/\hat W)=[\mbox{QF}(\widehat{S/xS}):\mbox{QF}(\widehat{R_0/\mathfrak p})]=\frac{ad-bc}{s}[S/m_S:R/m_R]
$$
and that
$$
[\mbox{QF}(\hat S):\mbox{QF}(\hat R_0)]=e(\hat W^*/\hat W)f(\hat W^*/\hat W)=(ad-bc)[S/m_S:R/m_R].
$$

\end{proof}

\subsection{Rational rank 1}

Throughout  this subsection  we will have the following assumptions. Suppose that $R$ is a 2 dimensional excellent local domain with quotient field $\mbox{QF}(R)=K$. Further suppose that $K^*$ is a finite separable extension of $K$ and $S$ is a 2 dimensional local domain with quotient field
$\mbox{QF}(S)=K^*$ such that  $S$ dominates $R$. 

Suppose that $\nu^*$ is a valuation of $K^*$ such that 
\begin{enumerate}
\item[1)] $\nu^*$ dominates $S$.
\item[2)] The residue field $V_{\nu^*}/m_{\nu^*}$ of $V_{\nu^*}$ is algebraic over $S/m_S$. 
\item[3)] The value group $\Gamma_{\nu^*}$ of $\nu^*$ has rational rank 1 (so it is isomorphic as an ordered group to a subgroup of $\QQ$).
\end{enumerate}
Let $\nu$ be the restriction of $\nu^*$ to $K$.
\vskip .2truein

\begin{Theorem}\label{Theorem2}
Suppose that  the defect $\delta(\nu^*/\nu)=0$. Then there exists a commutative diagram  
\begin{equation}\label{eq522}
\begin{array}{ccccc}
R_1&\rightarrow& S_1&\subset & V_{\nu^*}\\
\uparrow&&\uparrow&&\\
R&\rightarrow &S&&
\end{array}
\end{equation}
such that the vertical arrows are products of quadratic transforms along $\nu^*$ and
\begin{enumerate}
\item[1)] $R_1$ and $S_1$ are two dimensional regular local rings.
\item[2)] $R_1$ has a regular system of parameters $u_1,v_1$ and $S_1$ has a regular system of parameters
$x_1,y_1$ and there exists a unit $\gamma_1 \in S_1$ such that
$$
u_1=\gamma_1 x_1^e\mbox{ and }v_1=y_1
$$
where $e=e(\nu^*/\nu)=|\Gamma_{\nu^*}/\Gamma_{\nu}|$,
and the class of $\nu^*(x_1)$ is a generator of the group $\Gamma_{\nu^*}/\Gamma_{\nu}\cong\ZZ_e$.
\item[3)] $V_{\nu^*}/m_{\nu^*}$ is the join $V_{\nu^*}/m_{\nu^*}=(V_{\nu}/m_{\nu})(S_1/m_{S_1})$ and
$[S_1/m_{S_1}:R_1/m_{R_1}]=f(\nu^*/\nu)=[V_{\nu^*}/m_{\nu^*}:V_{\nu}/m_{\nu}]$.
\end{enumerate}
\end{Theorem}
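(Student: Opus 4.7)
The plan is to adapt the proof of Theorems 7.3 and 7.35 of \cite{CP} from the algebraic-function-field setting over an algebraically closed field to the excellent local ring setting, using the completion-based degree formula of Proposition \ref{Prop10} in place of the power-series calculations available when a coefficient field is present. I begin by reducing to the case where $R$ and $S$ are both two-dimensional regular excellent local rings: resolution of singularities for two-dimensional excellent local rings (\cite{Li}, \cite{CJS}) together with Lemma \ref{LemmaPre2} shows that a suitable sequence of quadratic transforms of $R$ along $\nu$ dominates a regular model, and similarly for $S$. Next, using embedded resolution in a two-dimensional regular local ring (Theorem 2 of \cite{Ab1}) applied inside $S$ together with further quadratic transforms in $R$ along $\nu^*$, I principalize $m_R S$ to produce regular parameters $u,v$ of $R$ and $x,y$ of $S$ with either $u = \gamma_1 x^a y^c$, $v = \gamma_2 x^b y^d$ (with $ad-bc\neq 0$) or $u = \gamma x^a$, $v = x^b f$ (with $x\nmid f$ and $f$ not a unit). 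Since $\Gamma_{\nu^*}$ has rational rank one, $\nu^*(x)$ and $\nu^*(y)$ are commensurable, so the two-variable monomial form cannot persist under further quadratic transforms along $\nu^*$, and after finitely many more we reach the one-variable setting of Proposition \ref{Prop10}.

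In this setup, Proposition \ref{Prop10} yields $[\mathrm{QF}(\hat S):\mathrm{QF}(\hat R_0)] = ad\,[S/m_S:R/m_R]$, where $R_0$ is the intermediate normal ring constructed there and $d = \bar\nu(f \bmod x)$. The defectlessness hypothesis $\delta(\nu^*/\nu)=0$, combined with the standard inequalities relating the defect to the decomposition of the henselization inside the completion, yields the bound $ad\,[S/m_S:R/m_R]\ge e(\nu^*/\nu)\,f(\nu^*/\nu)$, with equality precisely in the target stable form $b=0$, $d=1$, $a=e$, $[S/m_S:R/m_R]=f$.

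The inductive heart of the proof is to define a numerical complexity $\kappa\ge 0$ — essentially a refinement of $ad\,[S/m_S:R/m_R] - ef$ — and to show that whenever the current form is not yet stable, some well-chosen sequence of quadratic transforms along $\nu^*$ either strictly decreases $\kappa$ or decreases a secondary lexicographic invariant. This is the analogue of the complexity descent of Proposition 7.2 and its sequel in \cite{CP}. Termination yields $R_1$ and $S_1$ in the desired form $u_1=\gamma_1 x_1^e$, $v_1 = y_1$, and the conclusions (2) and (3) then drop out of the equality case of Proposition \ref{Prop10}: $a = e$ forces the class of $\nu^*(x_1)$ to generate $\Gamma_{\nu^*}/\Gamma_\nu$, and $[S_1/m_{S_1}:R_1/m_{R_1}] = f(\nu^*/\nu)$ together with the inclusion $V_\nu/m_\nu\subset V_{\nu^*}/m_{\nu^*}$ gives the residue-field join.

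The main obstacle is the inductive descent without a coefficient field: in \cite{CP} the complexity is controlled via explicit power-series expansions of $f$ over the algebraically closed residue field, but in the excellent setting one must instead track the invariants $a,b,d,[S/m_S:R/m_R]$ purely through the completion-based degree formula of Proposition \ref{Prop10}. A secondary subtlety is that the residue field extension $[S/m_S:R/m_R]$ can strictly grow under a quadratic transform (whereas it is always trivial in the algebraically closed setting), so one must show that this growth is controlled by defectlessness and eventually saturates at $f(\nu^*/\nu)$, which is the decisive use of the hypothesis $\delta(\nu^*/\nu)=0$.
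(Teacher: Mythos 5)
Your proposal correctly identifies Proposition \ref{Prop10} as the engine of the argument, and the broad outline (reduce to regular local rings, reach a monomial form, then use the degree formula) is on track. But the inductive ``complexity descent'' you propose as the heart of the proof is not what the paper does, and more importantly it would not work as stated, because the quantity you would descend on is in fact constant.

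The key point you miss is that the paper does not merely deduce an inequality $ad\,[S/m_S:R/m_R]\ge ef$ from Proposition \ref{Prop10}. Proposition \ref{Prop32} upgrades the formula of Proposition \ref{Prop10} to an \emph{exact equality}: there is a fixed model $R'$ of $K$ (constructed via the Galois theory of the splitting field, so that $G^s(S^*/R^*)=G^s(\nu^*/\nu)$ for all $R^*$ above $R'$, combined with Proposition 1 of \cite{LU}) such that for every monomial form $u=\gamma x^a$, $v=x^bf$ with $R_1$ dominating $R'$, one has $ad\,[S_1/m_{S_1}:R_1/m_{R_1}]=e(\nu^*/\nu)f(\nu^*/\nu)p^{\delta(\nu^*/\nu)}$. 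When $\delta(\nu^*/\nu)=0$, this is $ad\,[S_1/m_{S_1}:R_1/m_{R_1}]=ef$ on the nose, for \emph{every} such form — including ones that are not yet monomial in the sense of the Theorem. So the quantity $ad\,[S/m_S:R/m_R]-ef$ that you want to push down to zero is already zero; there is nothing to descend on, and no induction. What actually pins down the target values is Proposition \ref{Prop40} together with Corollary \ref{Prop41}: after further passing above a model $R''$ one has both $a\ge e$ (because $\nu^*(x)$ then generates $\Gamma_{\nu^*}/\Gamma_\nu$, so $e\mid a$) and $[S_1/m_{S_1}:R_1/m_{R_1}]\ge f$ (because $S_1/m_{S_1}$ then generates $V_{\nu^*}/m_{\nu^*}$ over $V_\nu/m_\nu$), and these two inequalities plus the exact degree equality \emph{force} $a=e$, $d=1$, $[S_1/m_{S_1}:R_1/m_{R_1}]=f$ with no induction at all. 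This is precisely the ``more direct argument'' the introduction advertises as the replacement for the stable-forms analysis of \cite{CP}.

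The second, and separate, gap is that you say nothing about how to achieve $b=0$, and the descent you propose could not help with this since $b$ does not appear in the degree formula at all. After the step above one is left with $u=\gamma x^e$, $v=x^by$, and still has to kill $b$. The paper does this by a concrete, finite construction: blow up along $\nu^*$ in $S$ just enough to make $\sqrt{xyS_1}$ a prime ideal, blow up along $\nu$ in $R$ to make $\sqrt{uvR_1}$ prime, and then an explicit lattice computation with the unimodular transition matrices (together with the choice of the monic generator $Q$ of the exceptional divisor's maximal ideal modulo the uniformizer, and the observation that its image in $S/m_S[y_1]$ is nonzero) shows that the resulting $R_1\rightarrow S_1$ has $b=0$ with $u_1=\gamma'x_1^e$, $v_1=Q$. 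Any correct proof has to contain some version of this step; your write-up elides it under ``some well-chosen sequence of quadratic transforms,'' which is precisely where the work is.

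Finally, your worry that $[S/m_S:R/m_R]$ ``must be shown to saturate at $f$'' by a separate argument controlled by defectlessness is also not needed as a standalone step: it falls out immediately from Corollary \ref{Prop41}, since once $R_1$ dominates $R''$, the equality $ad\,[S_1/m_{S_1}:R_1/m_{R_1}]=ef$ combined with $a\ge e$ and $d\ge 1$ gives $[S_1/m_{S_1}:R_1/m_{R_1}]\le f$, while the join condition gives the reverse inequality.
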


It will follow from our proof  that there exists a diagram (\ref{eq522}) such that the conditions 1), 2) and 3) of Theorem \ref{Theorem2} are stable under further appropriate sequences of quadratic transforms above $R$ and $S$.

\begin{Proposition}\label{Prop32}
There exists a local ring $R'$  of $K$ which is essentially of finite type over $R$, is dominated by 
$\nu$ and dominates $R$ such that if we have a commutative diagram
\begin{equation}\label{eq31}
\begin{array}{lll}
V_{\nu}&\rightarrow&V_{\nu^*}\\
\uparrow&&\uparrow\\
R_1&\rightarrow&S_1\\
\uparrow&&\\
R'&&\uparrow\\
\uparrow&&\\
R&\rightarrow&S
\end{array}
\end{equation}
where 
$R_1$ is a regular local ring of $K$ which is essentially of finite type over $R$ and dominates $R$, $S_1$ is a regular local ring of $K^*$ which is essentially of finite type over $S$ and dominates $S$, 
$R_1$ has a regular system of parameters $u,v$ and $S_1$ has a regular system of parameters $x,y$ such that there is an expression
$$
u=\gamma x^a, v=x^bf
$$
where $a>0$, $b\ge 0$, $\gamma$ is a unit in $S$, $x\not\,\mid f$ in $S_1$ and $f$ is not a unit in $S_1$, then
\begin{equation}\label{eq37}
ad[S_1/m_{S_1}:R_1/m_{R_1}]=e(\nu^*/\nu)f(\nu^*/\nu)p^{\delta(\nu^*/\nu)}.
\end{equation}
where $d=\overline\nu(f\mbox{ mod }x)$ with $\overline \nu$ being the natural valuation of the DVR $S/xS$.
\end{Proposition}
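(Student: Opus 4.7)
The plan is to apply Proposition \ref{Prop10} to the monomial extension $R_1\to S_1$ in the hypothesis to obtain a degree formula for completions, and then to match the resulting geometric degree with the valuation-theoretic invariant $e(\nu^*/\nu)f(\nu^*/\nu)p^{\delta(\nu^*/\nu)}$ by choosing $R'$ appropriately. Applying Proposition \ref{Prop10} with $R_1\to S_1$ playing the role of $R\to S$ there produces an intermediate two-dimensional normal local ring $R_0$, essentially of finite type over $R_1$, with $R_1\subset R_0\subset S_1$, such that $S_1$ lies over $R_0$ and
$$
[\mbox{QF}(\hat S_1):\mbox{QF}(\hat R_0)]=ad[S_1/m_{S_1}:R_1/m_{R_1}].
$$
So it is enough to pick $R'$ so that, for every $R_1$ as in the hypothesis, $[\mbox{QF}(\hat S_1):\mbox{QF}(\hat R_0)]=e(\nu^*/\nu)f(\nu^*/\nu)p^{\delta(\nu^*/\nu)}$.

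To pick $R'$, I would pass to a Galois closure $K'/K$ of $K^*/K$ with a fixed extension $\nu'$ of $\nu^*$, and use the fundamental equality
$$
[K^*:K]=\sum_{j}e(\nu^*_j/\nu)f(\nu^*_j/\nu)p^{\delta(\nu^*_j/\nu)},
$$
where the sum is over the finitely many extensions $\nu^*_j$ of $\nu$ to $K^*$. By a standard center-separation argument (blowing up to disjoin the centers on $S$ of the different $\nu^*_j$), one can arrange that $R'$ has the property that $\nu^*$ is the unique extension of $\nu$ to $K^*$ whose valuation ring dominates any two-dimensional regular local ring $S_1$ of $K^*$ which lies over (a suitable transform of) $S$ above $R'$ and is dominated by $\nu^*$. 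A convenient formulation is that $R'$ should dominate the restriction to $R$ of the splitting ring of $\nu'/\nu$ in $K'$.

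With such an $R'$ fixed, I would identify $[\mbox{QF}(\hat S_1):\mbox{QF}(\hat R_0)]$ with $e(\nu^*/\nu)f(\nu^*/\nu)p^{\delta(\nu^*/\nu)}$ by passing to completions: since $R_0$ and $S_1$ are excellent and normal (respectively, regular), $\hat R_0$ and $\hat S_1$ are normal domains, and $\nu$ extends to some valuation $\tilde\nu$ of $\mbox{QF}(\hat R_0)$ dominating $\hat R_0$. The choice of $R'$, together with the Henselian property of $\hat S_1$, guarantees that there is a unique extension $\tilde\nu^*$ of $\tilde\nu$ to $\mbox{QF}(\hat S_1)$ dominating $\hat S_1$, and that the invariants $e(\tilde\nu^*/\tilde\nu)$, $f(\tilde\nu^*/\tilde\nu)$ and $p^{\delta(\tilde\nu^*/\tilde\nu)}$ coincide with those of $\nu^*/\nu$, using the hypotheses that $V_{\nu^*}/m_{\nu^*}$ is algebraic over $S/m_S$ and that $\Gamma_{\nu^*}$ has rational rank $1$. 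The fundamental equality applied to this unique completed extension then produces the desired identity.

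The hard part is precisely the last identification: the completed invariants $e(\tilde\nu^*/\tilde\nu)$, $f(\tilde\nu^*/\tilde\nu)$ and $p^{\delta(\tilde\nu^*/\tilde\nu)}$ have to equal $e(\nu^*/\nu)$, $f(\nu^*/\nu)$ and $p^{\delta(\nu^*/\nu)}$. Completion can in principle split a rank-one valuation, enlarge the residue field, or alter the defect, so the rational rank $1$ and residually algebraic hypotheses must be used in an essential way. One concrete route is to work directly with the DVR extension $\hat W\subset\hat W^*$ from the proof of Proposition \ref{Prop10} (of ramification index $s=\gcd(a,b)$ and residue degree $\frac{ad}{s}[S_1/m_{S_1}:R_1/m_{R_1}]$) and to show that, as $R_1$ grows in the sequence of quadratic transforms above $R'$, these two factors stabilize and combine to give exactly $e(\nu^*/\nu)$ and $f(\nu^*/\nu)p^{\delta(\nu^*/\nu)}$ respectively.
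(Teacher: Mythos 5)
Your overall architecture is right at the start: apply Proposition~\ref{Prop10} to the monomial extension $R_1\to S_1$ to get a normal $R_0$ with
$[\mbox{QF}(\hat S_1):\mbox{QF}(\hat R_0)]=ad[S_1/m_{S_1}:R_1/m_{R_1}]$,
and then try to identify this completed degree with $e(\nu^*/\nu)f(\nu^*/\nu)p^{\delta(\nu^*/\nu)}$ after a suitable choice of $R'$.  But from there you diverge from the paper, and the divergence lands you in exactly the place you flag as ``the hard part,'' which is a genuine gap.

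Your plan is to lift $\nu$ to a valuation $\tilde\nu$ of $\mbox{QF}(\hat R_0)$, use Henselianity to get a unique $\tilde\nu^*$ above it, and then claim that $e(\tilde\nu^*/\tilde\nu)$, $f(\tilde\nu^*/\tilde\nu)$, $p^{\delta(\tilde\nu^*/\tilde\nu)}$ agree with those of $\nu^*/\nu$.  You correctly observe that this is dangerous: completion can enlarge residue fields, change defect, and alter the number of extensions, and none of the tools you invoke (``center separation,'' Henselianity, ``stabilization of the factors $s$ and $ad/s$'') actually controls these phenomena.  No argument is given, and the statement that the $\ZZ$-lifted invariants coincide is not a formal consequence of rational rank $1$ and residual algebraicity.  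Also, the fundamental equality $[K^*:K]=\sum_j e_jf_jp^{\delta_j}$ is the wrong identity to aim for here: the relevant formula, used in the paper, is $[K^*:K^s(\nu^*/\nu)]=e(\nu^*/\nu)f(\nu^*/\nu)p^{\delta(\nu^*/\nu)}$, coming from the orders of the splitting, inertia, and ramification groups.

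The paper resolves the hard part without ever lifting $\nu$ to the completion.  In the Galois case, $R'$ is chosen (via Abhyankar's argument in \cite{RTM}, page~86) so that $G^s(S^*/R^*)=G^s(\nu^*/\nu)$ for every normal pair $R^*\to S^*$ above $R'$.  Then one sets $S'$ equal to the localization of the integral closure of $R^*$ in the splitting field $K^s(\nu^*/\nu)$ dominated by $\nu^*$.  Because the splitting group of $S_1/R^*$ equals that of $\nu^*/\nu$, the extension $R^*\to S'$ is unramified with trivial residue extension, so $\hat R^*=\hat S'$.  Next, since $S_1$ is the unique local ring of $K^*$ lying over $S'$, Proposition~1 (p.~498) of \cite{LU} gives $[\mbox{QF}(\hat S_1):\mbox{QF}(\hat S')]=[K^*:K^s(\nu^*/\nu)]$, which combined with $\hat R^*=\hat S'$ and the formula $[K^*:K^s(\nu^*/\nu)]=efp^{\delta}$ gives (\ref{eq37}).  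The non-Galois case is then obtained by passing to a Galois closure $K'$, applying the Galois case to both $K'/K$ and $K'/K^*$, and using multiplicativity of $e$, $f$, and $p^{\delta}$ in towers.  The crucial ingredient you are missing, therefore, is the use of the splitting field $K^s(\nu^*/\nu)$ together with the unramified-and-residually-trivial intermediate ring $S'$ satisfying $\hat R^*=\hat S'$: that is what lets one read off the completed degree as $[K^*:K^s(\nu^*/\nu)]$ without ever extending $\nu$ across the completion.

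Your remark that ``$R'$ should dominate the restriction to $R$ of the splitting ring of $\nu'/\nu$'' points in the right direction, but the condition the paper imposes is the stronger and sharper one on splitting groups ($G^s(S^*/R^*)=G^s(\nu^*/\nu)$ stably above $R'$), and without this precise condition the isomorphism $\hat R^*=\hat S'$ and the uniqueness of $S_1$ over $S'$ are not available.
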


\begin{proof} We first prove the proposition with the assumption that $K^*/K$ is Galois with Galois group $G$. 

Let $g$ be the number of extensions of $\nu$ to $K^*$.
Writing $f(\nu^*/\nu)=f_0p^s$ where $\mbox{gcd}(f_0,p)=1$ and $e(\nu^*/\nu)=e_0p^t$ where $\mbox{gcd}(e_0,p)=1$, we have 
by the Corollary to Theorem 25, page 78 \cite{ZS2} that
$[G:G^s(\nu^*/\nu)]=g$, $[G^s(\nu^*/\nu):G^i(\nu^*/\nu)]=f_0$,
$[G^i(\nu^*/\nu):G^r(\nu^*/\nu)]=e_0$ and $|G^r(\nu^*/\nu)|=p^{s+t+\delta(\nu^*/\nu)}$,
so that
$$
[K^*:K]=ge(\nu^*/\nu)f(\nu^*/\nu)p^{\delta(\nu^*/\nu)}.
$$
Since $[K^s(\nu^*/\nu):K]=g$, we have that
\begin{equation}\label{eq35}
[K^*:K^s(\nu^*/\nu)]=e(\nu^*/\nu)f(\nu^*/\nu)p^{\delta(\nu^*/\nu)}.
\end{equation}
By the argument at the top of page 86 in \cite{RTM}, there exists $R'$ as above such that for any diagram 
\begin{equation}
\begin{array}{lll}
V_{\nu}&\rightarrow&V_{\nu^*}\\
\uparrow&&\uparrow\\
R^*&\rightarrow&S^*\\
\uparrow&&\uparrow\\
R'&&\\
\uparrow&&\uparrow\\
R&\rightarrow&S
\end{array}
\end{equation}
where $R^*$ and $S^*$ are normal  local rings of $K$ and $K^*$ respectively  such that $R^*$ is essentially of finite type over $R'$ and dominates $R'$ and  $S^*$ lies over $R^*$, we have that
\begin{equation}\label{eq34}
G^s(S^*/R^*)=G^s(\nu^*/\nu).
\end{equation}

Now applying Proposition \ref{Prop10} to an extension $R_1\rightarrow S_1$ satisfying (\ref{eq31}) (so that $R_1$ dominates $R'$), we have a commutative diagram
$$
\begin{array}{lll}
V_{\nu^*}&\rightarrow&V_{\nu}\\
\uparrow&&\uparrow\\
R^*&\rightarrow &S_1\\ 
\uparrow&\nearrow&\\
R_1&&
\end{array}
$$
such that $R^*$ and  $S_1$ are normal  local rings such that $S_1$ lies over $R^*$ and
\begin{equation}\label{eq33}
[\mbox{QF}(\hat S_1):\mbox{QF}(\hat R^*)]=ad[S_1/m_{S_1}:R_1/m_{R_1}].
\end{equation}

Let $S'=K^s(\nu^*/\nu^*)\cap S_1$. Then $R^*\rightarrow S'$ is unramified with $S'/m_{S'}=R^*/m_{R^*}$ by Theorem 1.47 \cite{RTM}, since $K^s(\nu^*/\nu^*)=K^s(S_1/R^*)$
by (\ref{eq34}).  Thus $\hat R^*=\hat S'$ by (10.1) \cite{RES}. We have that 
\begin{equation}\label{eq36}
[K^*:K^s(\nu^*/\nu)]=[\mbox{QF}(\hat S'):\mbox{QF}(\hat R^*)]
\end{equation}
by II of Proposition 1 (page 498) \cite{LU}, since there is a unique local ring in $K^*$ lying over $S'$.
Now combining (\ref{eq35}), (\ref{eq36}) and (\ref{eq33}), we obtain formula (\ref{eq37}). (The proof in \cite{LU} is valid in our more general situation since $S'$ is excellent.)

We will now establish the proposition in the general case, when $K^*/K$ is only assumed to be finite and separable. Let $K'$ be a Galois closure of $K^*/K$, and let $\nu'$ be an extension of $\nu^*$ to $K'$. By the Galois case, there exists a normal local ring $R'$ of $K$ 
giving the property of Proposition \ref{Prop32} within the extension $K'/K$, and a
 normal local ring $S'$ of $K^*$ giving the property of Proposition \ref{Prop32} within the extension $K'/K^*$. We can choose $R'$ and $S'$ so that $S'$ lies  over $R'$.

Now suppose that we are given a diagram (\ref{eq31}). We then have that $S'\subset S_1$.  We have by Proposition \ref{Prop10} local rings $R_0$ of $K$ and $S_1$ of $K^*$ such that $S_1$ lies over $R_0$ and
\begin{equation}\label{eq38}
[\mbox{QF}(\hat S_1):\mbox{QF}(\hat R_0)]=ad[S_1/m_{S_1}:R_1/m_{R_1}].
\end{equation}
Let $T$ be the integral closure of $S_1$ in $K'$ and let $T'=T_{m_{\nu'}\cap T}$. 
By (\ref{eq35}) and  (\ref{eq36}), we have that
$$
[\mbox{QF}(\hat T'):\mbox{QF}(\hat R_0)]=e(\nu'/\nu)f(\nu'/\nu)p^{\delta(\nu'/\nu)}
$$
and
$$
[\mbox{QF}(\hat T'):\mbox{QF}(\hat S_1)]=e(\nu'/\nu^*)f(\nu'/\nu^*)p^{\delta(\nu'/\nu^*)}.
$$ 
Thus
$$
[\mbox{QF}(\hat S_1):\mbox{QF}(\hat R_0)]=\frac{[\mbox{QF}(\hat T'):\mbox{QF}(\hat R_0)]}{[\mbox{QF}(\hat T'):\mbox{QF}(\hat S_1)]}
=e(\nu^*/\nu)f(\nu^*/\nu)p^{\delta(\nu^*/\nu)}
$$
since $e,f$ and $p^{\delta}$ are multiplicative. Now formula (\ref{eq37}) follows from (\ref{eq38}).
\end{proof}

\begin{Proposition}\label{Prop40}
There exists a  local ring $R''$ of $K$ which is essentially of finite type over $R$, is dominated by 
$\nu$ and dominates $R$ such that if we have a commutative diagram
\begin{equation}\label{eq41}
\begin{array}{lll}
V_{\nu}&\rightarrow&V_{\nu^*}\\
\uparrow&&\uparrow\\
R_1&\rightarrow&S_1\\
\uparrow&&\uparrow\\
R''&&\\
\uparrow&&\uparrow\\
R&\rightarrow&S
\end{array}
\end{equation}
where 
$R_1$ is a regular local ring of $K$ which is essentially of finite type over $R''$, $S_1$ is a regular local ring of $K^*$ which is essentially of finite type over $S$ and dominates $S$,
$R_1$ has a regular system of parameters $u,v$ and $S_1$ has a regular system of parameters $x,y$ such that there is an expression
$$
u=\gamma x^a, v=x^bf
$$
where $a>0$, $b\ge 0$, $\gamma$ is a unit in $S$, $x\not\,\mid f$ in $S_1$ and $f$ is not a unit in $S_1$ and there exists a unit $\tau\in S_1$ and $n\in \ZZ_{+}$ such that $h=\tau x^n$, then
\begin{enumerate}
\item[1)] $\nu^*(x)$ is a generator of $\Gamma_{\nu^*}/\Gamma_{\nu}\cong Z_{e(\nu^*/\nu)}$,
\item[2)] $V_{\nu^*}/m_{\nu^*}=(V_{\nu}/m_{\nu})(S_1/m_{S_1})$,
\item[3)]
$$
ad[S_1/m_{S_1}:R_1/m_{R_1}]=e(\nu^*/\nu)f(\nu^*/\nu)p^{\delta(\nu^*/\nu)}
$$
\end{enumerate}
where $d=\overline(f\mbox{ mod }x)$ with $\overline\nu$ being the natural valuation of the DVR $S/xS$.
\end{Proposition}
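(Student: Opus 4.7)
My plan is to take $R''$ to be the ring $R'$ of Proposition \ref{Prop32}, possibly enlarged so as to encode a generator of the cyclic quotient $\Gamma_{\nu^*}/\Gamma_\nu\cong\ZZ_{e(\nu^*/\nu)}$ and a primitive element for the residue field extension. Concretely, since $\Gamma_{\nu^*}$ has rational rank one and $e=e(\nu^*/\nu)$ is finite, pick $w\in V_{\nu^*}\setminus\{0\}$ whose class is a generator of $\Gamma_{\nu^*}/\Gamma_\nu$; since $V_{\nu^*}/m_{\nu^*}$ is a finite algebraic extension of $V_\nu/m_\nu$, pick $z\in V_{\nu^*}$ whose residue generates this extension as a simple field extension. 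Both $w$ and $z$ lie in some two-dimensional excellent regular local ring $T$ of $K^*$ dominating $S$ and dominated by $\nu^*$, and by Theorem \ref{ThmPre1} and Lemma \ref{LemmaPre2}, $T$ is the endpoint of a uniquely determined sequence of quadratic transforms along $\nu^*$ starting at $S$. Enlarge $R''$ so that any $R_1$ arising in a diagram (\ref{eq41}) dominating $R''$ forces the corresponding $S_1$ to contain $T$; this uses the uniqueness in Theorem \ref{ThmPre1} to push $R''$ past the images in $K$ of the intermediate quadratic transforms from $S$ up to $T$.

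With this choice of $R''$, item (3) is immediate from Proposition \ref{Prop32} applied to $R_1\to S_1$. For (1), the hypothesis $h=\tau x^n$ with $\tau\in S_1^\times$ and $h\in R_1$ yields
$$n\,\nu^*(x)=\nu(h)\in\Gamma_\nu,$$
so the order $c$ of the class of $\nu^*(x)$ in $\Gamma_{\nu^*}/\Gamma_\nu$ divides $n$. For the reverse, $w\in T\subseteq S_1$, and expanding $w$ in the regular parameters $x,y$ of $S_1$ combined with the relations $\nu^*(u)=a\,\nu^*(x)\in\Gamma_\nu$ and $\nu^*(v)=\nu(v)\in\Gamma_\nu$ shows that $\nu^*(w)\equiv k\,\nu^*(x)\pmod{\Gamma_\nu}$ for some $k\in\ZZ$, so that the subgroup of $\Gamma_{\nu^*}/\Gamma_\nu$ generated by $\nu^*(x)$ contains a generator of the whole group, forcing $c=e$. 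For (2), the inclusion $(V_\nu/m_\nu)(S_1/m_{S_1})\subseteq V_{\nu^*}/m_{\nu^*}$ is automatic, and the reverse inclusion follows because $z\in T\subseteq S_1$ has residue in $S_1/m_{S_1}$ equal to a primitive element of $V_{\nu^*}/m_{\nu^*}$ over $V_\nu/m_\nu$.

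The main obstacle is justifying that $\nu^*(w)\equiv k\,\nu^*(x)\pmod{\Gamma_\nu}$ in step (1), which really requires that the second regular parameter $y$ of $S_1$ satisfies $\nu^*(y)\in\Gamma_\nu+\ZZ\,\nu^*(x)$; this is not automatic from the bare normal form $u=\gamma x^a$, $v=x^bf$ and must be obtained by a careful supplementary refinement of $R''$ together with the degree analysis of Proposition \ref{Prop10}. A symmetric point arises in (2): one must ensure that the residue of $z$ in $S_1/m_{S_1}$ is not killed by any choice of normal form, which is handled by enlarging $R''$ enough to guarantee that $z$ and its minimal polynomial over $R_1/m_{R_1}$ are already monomialized in $S_1$. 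Both refinements follow the template of Proposition 7.2 of \cite{CP}, translated from the algebraic setting over an algebraically closed field to the excellent setting via the excellence properties recalled from Scholie 7.8.3 of \cite{EGAIV}.
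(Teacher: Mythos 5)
Your reading of the hypothesis ``there exists a unit $\tau\in S_1$ and $n\in\ZZ_+$ such that $h=\tau x^n$'' is the source of the main gap. You treat $h$ as an element of $R_1\subset K$, deduce $n\nu^*(x)=\nu(h)\in\Gamma_\nu$, and therefore only get a \emph{divisibility} constraint (the order of $\nu^*(x)$ in $\Gamma_{\nu^*}/\Gamma_\nu$ divides $n$). To close the argument you then need an independent element $w$ generating the quotient group and a claim that $\nu^*(y)\in\Gamma_\nu+\ZZ\nu^*(x)$ so that $\nu^*(w)\equiv k\nu^*(x)\pmod{\Gamma_\nu}$ --- a claim you rightly flag as not automatic from the normal form $u=\gamma x^a$, $v=x^bf$, and which you do not prove. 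This is a genuine gap, not a cosmetic one: nothing in the normal form or in Proposition \ref{Prop32} constrains $\nu^*(y)$ modulo $\Gamma_\nu+\ZZ\nu^*(x)$.

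In the paper's argument, $h$ is a \emph{fixed} element of $V_{\nu^*}$ chosen in advance so that $\nu^*(h)$ is a generator of $\Gamma_{\nu^*}/\Gamma_\nu$ (the statement suppresses this, but it is how $h$ is introduced when Proposition \ref{Prop40} is invoked in the proof of Theorem \ref{Theorem2}). With that reading, item 1) is immediate: the hypothesis $h=\tau x^n$ gives $\nu^*(h)=n\,\nu^*(x)$, and since $n\,\nu^*(x)$ generates $\Gamma_{\nu^*}/\Gamma_\nu\cong\ZZ_{e(\nu^*/\nu)}$, the class of $\nu^*(x)$ must have order exactly $e(\nu^*/\nu)$. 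No auxiliary element $w$, no control of $\nu^*(y)$, and no separate enlargement of $R''$ are needed for 1).

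Your treatment of 2) is also heavier than necessary. Rather than enlarging $R''$ so that any $S_1$ is forced to contain a prescribed ring $T$ --- a step whose justification you only sketch and which is delicate, since the constraint ``$R_1$ dominates $R''$'' does not by itself push $S_1$ arbitrarily far up its quadratic sequence --- the paper chooses a $V_\nu/m_\nu$-basis $g_1,\ldots,g_s$ of $V_{\nu^*}/m_{\nu^*}$ and enlarges $R'$ to $R''$ so that the $g_i$ lie in the integral closure of $R''$ in $K^*$. Then for any $R_1$ dominating $R''$, each $g_i$ is integral over $R_1\subset S_1$ and lies in $V_{\nu^*}\cap\mbox{QF}(S_1)$, so $g_i\in S_1$ since $S_1$ is normal. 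This gives $V_{\nu^*}/m_{\nu^*}=(V_\nu/m_\nu)(S_1/m_{S_1})$ directly, with no detour through a fixed $T$ or Theorem \ref{ThmPre1}. Your handling of 3) (citing Proposition \ref{Prop32} after ensuring $R''$ dominates $R'$) agrees with the paper.
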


\begin{proof} Let $R'$ be the  local ring of the conclusions of Proposition \ref{Prop32} and let $g_1,\ldots,g_s\in V_{\nu^*}$ be such that the classes of $g_1,\ldots,g_s$ in $V_{\nu^*}/m_{\nu^*}$ are a $V_{\nu}/m_{\nu}$ basis.   Let $R''$ be a regular   local ring of $K$ which dominates $R'$ and is essentially of finite type over $R'$
such that $g_1,\ldots,g_s$ are in the integral closure of $R''$ in $K^*$.

Suppose that we have a diagram (\ref{eq41}). Then $g_1,\ldots,g_s\in S_1$ so conclusion 2) holds. 

Since $\nu^*(h)=n\nu^*(x)$ is a generator of $\Gamma_{\nu^*}/\Gamma_{\nu}$ we have that $\nu^*(x)$ is a generator of  $\Gamma_{\nu^*}/\Gamma_{\nu}$.

Finally, 3) holds by Proposition \ref{Prop32}, since $R''$ dominates $R'$.
\end{proof}

\begin{Corollary}\label{Prop41} Let assumptions be as in Proposition  \ref{Prop40}, and further assume that $\delta(\nu^*/\nu)=0$. Let $R''$ be the local ring of the conclusions of Proposition \ref{Prop40}. Suppose that we have a commutative diagram (\ref{eq41}). Then 
$$
a=e(\nu^*/\nu), d=1 \mbox{ and } [S_1/m_{S_1}:R_1/m_{R_1}]=f(\nu^*/\nu).
$$
\end{Corollary}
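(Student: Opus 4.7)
The plan is to combine the numerical identity from Proposition \ref{Prop40}(3), which under $\delta(\nu^*/\nu)=0$ reads
$$ad[S_1/m_{S_1}:R_1/m_{R_1}]=e(\nu^*/\nu)f(\nu^*/\nu),$$
with three separate inequalities $a\ge e(\nu^*/\nu)$, $d\ge 1$, and $[S_1/m_{S_1}:R_1/m_{R_1}]\ge f(\nu^*/\nu)$. Once these three lower bounds are in hand, the equality of the products forces equality in each factor, yielding the conclusion.

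Next I would establish the three inequalities one by one. For $a\ge e(\nu^*/\nu)$: from $u=\gamma x^a$ with $\gamma\in S_1^\times$ I get $\nu^*(u)=a\nu^*(x)$; since $u\in R_1$, the value $\nu^*(u)=\nu(u)$ lies in $\Gamma_{\nu}$, and by conclusion 1) of Proposition \ref{Prop40} the class of $\nu^*(x)$ has order exactly $e(\nu^*/\nu)$ in $\Gamma_{\nu^*}/\Gamma_{\nu}$, so $e(\nu^*/\nu)$ divides $a$ and in particular $a\ge e(\nu^*/\nu)$. For $d\ge 1$: by hypothesis $f$ is a non-unit in $S_1$ with $x\nmid f$, so $f\bmod x$ is a nonzero element of the maximal ideal of the DVR $S_1/xS_1$, whence $d=\overline\nu(f\bmod x)\ge 1$.

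For $[S_1/m_{S_1}:R_1/m_{R_1}]\ge f(\nu^*/\nu)$ I would invoke the standard compositum inequality in field theory: inside $V_{\nu^*}/m_{\nu^*}$ the subfields $V_{\nu}/m_{\nu}$ and $S_1/m_{S_1}$ generate the whole residue field by conclusion 2) of Proposition \ref{Prop40}, so
$$[V_{\nu^*}/m_{\nu^*}:V_{\nu}/m_{\nu}]\le [S_1/m_{S_1}:(V_{\nu}/m_{\nu})\cap(S_1/m_{S_1})].$$
Since $R_1/m_{R_1}$ embeds in both $V_{\nu}/m_{\nu}$ and $S_1/m_{S_1}$, it is contained in the intersection, so the right-hand side is at most $[S_1/m_{S_1}:R_1/m_{R_1}]$, giving the desired inequality.

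Combining the three lower bounds, the product $ad[S_1/m_{S_1}:R_1/m_{R_1}]$ is at least $e(\nu^*/\nu)\cdot 1\cdot f(\nu^*/\nu)$; comparing with the equality from Proposition \ref{Prop40}(3) forces $a=e(\nu^*/\nu)$, $d=1$, and $[S_1/m_{S_1}:R_1/m_{R_1}]=f(\nu^*/\nu)$. I do not anticipate a serious obstacle; the only point that deserves a moment of care is the compositum inequality, which requires noting that all the residue fields in question sit naturally inside the common residue field $V_{\nu^*}/m_{\nu^*}$ via the induced maps from the inclusions $R_1\subseteq V_{\nu}$ and $S_1\subseteq V_{\nu^*}$.
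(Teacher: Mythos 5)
Your proposal is correct and follows essentially the same route as the paper: Proposition \ref{Prop40}(1) gives $e(\nu^*/\nu)\mid a$, Proposition \ref{Prop40}(2) gives $[S_1/m_{S_1}:R_1/m_{R_1}]\ge f(\nu^*/\nu)$ (the paper notes $S_1/m_{S_1}$ contains a $V_{\nu}/m_{\nu}$-basis of $V_{\nu^*}/m_{\nu^*}$, while you invoke the equivalent compositum inequality), and these are pinned down by the product identity from Proposition \ref{Prop40}(3) with $\delta(\nu^*/\nu)=0$. Making $d\ge 1$ explicit is a minor clarification the paper leaves implicit, but it does not change the argument.
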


\begin{proof}  We have that $a\nu^*(x)\in \Gamma_{\nu}$ and $\nu^*(x)$ is a generator of $\Gamma_{\nu^*}/\Gamma_{\nu}\cong \ZZ_{e(\nu^*/\nu)}$ by 1) of Proposition \ref{Prop40}
so $e(\nu^*/\nu)$ divides $a$, and thus
\begin{equation}\label{eq43}
a\ge e(\nu^*/\nu).
\end{equation}
Further,
\begin{equation}\label{eq44}
[S_1/m_{S_1}:R_1/m_{R_1}]\ge f(\nu^*/\nu)
\end{equation}
since $V_{\nu^*}/m_{\nu^*}=(V_{\nu}/m_{\nu})(S_1/m_{S_1})$ by 2) of Proposition \ref{Prop40}, so $S_1/m_{S_1}$ contains a basis of $V_{\nu^*}/m_{\nu^*}$ over $V_{\nu}/m_{\nu}$.
Further, we have that
\begin{equation}\label{eq45}
ad[S_1/m_{S_1}:R_1/m_{R_1}]=e(\nu^*/\nu)f(\nu^*/\nu)
\end{equation}
by 3) of Proposition \ref{Prop40} since $\delta(\nu^*/\nu)=0$. Thus by equations (\ref{eq43}), (\ref{eq44}) and (\ref{eq45}), we have 
$$
a[S_1/m_{S_1}:R_1/m_{R_1}]\ge e(\nu^*/\nu)f(\nu^*/\nu)=ad[S_1/m_{S_1}:R_1/m_{R_1}]
$$
giving 
the
conclusions of the corollary.
\end{proof}

\vskip .2truein
We now give the proof of Theorem \ref{Theorem2}. Let $R''$ be the ring of the conclusions of Proposition \ref{Prop40}. We may assume, 
after replacing $R$ and $S$ with appropriate sequences of quadratic transforms of $R$ and $S$, that $R$ and $S$ are regular,
$R$ dominates $R''$ and $R$ has regular parameters $u,v$ and $S$ has regular parameters $x,y$ such that
\begin{equation}\label{eq51}
u=\gamma x^a, v=x^by
\end{equation}
and $h=\tau x^n$ where $h\in V_{\nu^*}$ is such that $\nu^*(h)$ is a generator of $\Gamma_{\nu^*}/\Gamma_{\nu}$ and $\tau$ is a unit in $S$.
(We have that $d=1$ by Corollary \ref{Prop41}). Let $S\rightarrow S_1$ be the smallest sequence of quadratic transforms 
along $\nu^*$ such that $\sqrt{xyS_1}$ is a prime ideal (this is possible since $\nu^*$ has rational rnak 1 so $\nu^*(x)$ and $\nu^*(y)$ are rationally dependent),
 and let $R\rightarrow R_1$ be the smallest sequence of quadratic transforms along $\nu$ such that $\sqrt{uvR_1}$ is a prime ideal. We will show that
$S_1$ dominates $R_1$ and we have regular parameters $u_1, Q$ in $R_1$ and $x_1,Q$ in $S_1$ and a unit $\gamma'$ in $S_1$ such that 
$u_1=\gamma'x_1^{e(\nu^*/\nu)}$. Since these conclusions will then hold under further sequences of quadratic transforms, we will then have established the conclusions of the theorem, and the remark on stability following the statement of Theorem \ref{Theorem2}.

We have that

$$
S_1=S[x_1,y_1]_{m_{\nu^*}\cap S[x_1,y_1]}
$$
 where
\begin{equation}\label{eq52}
x=x_1^{m_1}y_1^{m_1'}, y=x_1^{n_1}y_1^{n_1'}
\end{equation}
with $m_1n_1'-n_1m_1'=\pm 1$, $\nu^*(x_1)>0$ and $\nu^*(y_1)=0$. 
$$
S[x_1,y_1]/x_1S[x_1,y_1]\cong S/m_S[y_1]
$$
is a polynomial ring over $S/m_S$. Let $f\in S/m_S[y_1]$ be the monic generator of 
$$
(m_{\nu^*}\cap S[x_1,y_1])/x_1S[x_1,y_1].
$$
There exists $P\in S[x_1,y_1]$ such that the residue of $P$ in $S[x_1,y_1]/x_1S[x_1,y_1]$ is $f$. Then $x_1, P$ are regular parameters in $S_1$.

Now substitute (\ref{eq52}) into (\ref{eq51}), to obtain
$$
\begin{array}{lll}
u&=& \gamma x_1^{m_1a}y_1^{m_1'a}\\
v&=& x_1^{m_1b+n_1}y_1^{m_1'b+n_1'}.
\end{array}
$$
Let $s=\mbox{gcd}(m_1a, m_1b+n_1)$. By a sequence of substitutions
$\hat u_0=u, \hat v_0=v$, and 
$$
\hat u_i=\hat u_{i+1}, \hat v_i=\hat u_{i+1}\hat v_{i+1}
$$
or
$$
\hat u_i=\hat u_{i+1}\hat v_{i+1}, \hat v_i=\hat v_{i+1}
$$
for $0\le i\le \lambda$ we obtain an expression
$$
\begin{array}{lll}
\hat u_{\lambda}&=& \gamma^{c_1}x_1^sy_1^{t_1}\\
\hat v_{\lambda}&=& \gamma^{c_2}x_1^sy_1^{t_2}.
\end{array}
$$
We have that
$$
s|t_1-t_2|=\left|\mbox{Det}\left(\begin{array}{cc}s&t_1\\s&t_2\end{array}\right)\right|
=\left|\left(\begin{array}{cc} m_1a& m_1'a\\ (m_1b+n_1)& (m_1'b+n_1')\end{array}\right)\right|
=a\left|\left(\begin{array}{cc} m_1&m_1'\\n_1&n_1'\end{array}\right)\right|=a.
$$
We thus have that $t_2-t_1\ne 0$. Set
$$
u_1=\hat u_{\lambda}, v_1=\frac{\hat v_{\lambda}}{\hat u_{\lambda}}\mbox{ if  }t_2-t_1>0,
$$
$$
u_1=\frac{\hat u_{\lambda}}{\hat v_{\lambda}}, v_1=\hat v_{\lambda}\mbox{ if }t_2-t_1<0.
$$
Without loss of generality, we may assume that $t_2-t_1>0$. 
We then have an expression
$$
u=u_1^{m_2}v_1^{m_2'}, v= u_1^{n_2}v_1^{n_2'}
$$
with $m_2n_2'-n_2m_2'=\pm 1$, $\nu(u_1)>0$ and $\nu(v_1)=0$. Now
$$
R_1=R[u_1,v_1]_{m_{\nu}\cap R[u_1,v_1]}.
$$
We have that $R[u_1,v_1]/u_1R[u_1,v_1]\cong R/m_R[v_1]$ is a polynomial ring over $R/m_R$. Let $g\in R/m_R[v_1]$ be the monic generator of 
$$
(m_{\nu}\cap R[u_1,v_1])/u_1R[u_1,v_1].
$$
There exists $Q\in R[u_1,v_1]$ such that the residue of $Q$ in $R[u_1,v_1]/u_1R[u_1,v_1]$ is $g$. $u_1,Q$ are regular parameters in $R_1$. 
We have an expression
$$
u_1=(\gamma^{c_1}y_1^{t_1})x_1^s, v_1=\gamma^{c_2-c_1}y_1^{t_2-t_1}.
$$
The inclusion $R\rightarrow S$ induces a natural homomorphism
$$
R/m_R[v_1]\cong R[u_1,v_1]/u_1R[u_1,v_1]\rightarrow S[x_1,y_1]/x_1S[x_1,y_1]\cong S/m_S[y_1].
$$
Let $0\ne \gamma_0$ be the residue of $\gamma$ in $S/m_S$. Then $\gamma_0$ is the residue of $\gamma$ in $S/m_S[y_1]$ and
$\gamma_0^{c_2-c_1}y_1^{t_2-t_1}$ is the residue of $v_1$ in $S/m_S[y_1]$. The residue of $Q$ in $S/m_S[y_1]$ is $g(\gamma_0y_1^{t_2-t_1})$ which is nonzero. Thus $b(S_1/R_1)=0$. We further have that  $a(S_1/R_1)=e(\nu^*/\nu)$, $d(S_1/R_1)=1$ and $[S_1/m_{S_1}:R_1/m_{R_1}]=f(\nu^*/\nu)$ by Proposition \ref{Prop40}. This completes the proof of Theorem \ref{Theorem2}.
\vskip .2truein

\subsection{Rational rank 2}

We have the following Theorem \ref{Theorem5} for rational rank 2 valuations dominating a two dimensional excellent local domain, most of whose simple proof is as on page 40 of \cite{CP}. The statement that the extension is defectless follows from the argument of Proposition \ref{Prop32} which establishes (\ref{eq37}), replacing references to Proposition \ref{Prop10} with Proposition \ref{Prop11}. We obtain the formula
$$
|ad-bc|[S_1/m_{S_1}:R_1/m_{R_1}]=e(\nu^*/\nu)f(\nu^*/\nu)p^{\delta(\nu^*/\nu)}
$$
instead of (\ref{eq37}). Since $e(\nu^*/\nu)=|ad-bc|$, we obtain that $\delta(\nu^*/\nu)=0$.
 Related results are proven for valuations of maximal rational rank in algebraic function fields in Theorem 3.1 \cite{KK2} 

\begin{Theorem}\label{Theorem5} Suppose that $R$ is a 2 dimensional excellent local domain with quotient field $\mbox{QF}(R)=K$. Further suppose that $K^*$ is a finite separable extension of $K$ and $S$ is a 2-dimensional local domain with quotient field
$\mbox{QF}(S)=K^*$ such that  $S$ dominates $R$. 

Suppose that $\nu^*$ is a valuation of $K^*$ such that 
\begin{enumerate}
\item[1)] $\nu^*$ dominates $S$.
\item[2)] The residue field $V_{\nu^*}/m_{\nu^*}$ of $V_{\nu^*}$ is algebraic over $S/m_S$. 
\item[3)] The value group $\Gamma_{\nu^*}$ of $\nu^*$ has rational rank 2. 
\end{enumerate}
Let $\nu$ be the restriction of $\nu^*$ to $K$.
 Then the defect $\delta(\nu^*/\nu)=0$, and there exists a commutative diagram  
\begin{equation}\label{eq622}
\begin{array}{ccccc}
R_1&\rightarrow& S_1&\subset & V_{\nu^*}\\
\uparrow&&\uparrow&&\\
R&\rightarrow &S&&
\end{array}
\end{equation}
such that the vertical arrows are products of quadratic transforms along $\nu^*$ and
\begin{enumerate}
\item[1)] $R_1$ and $S_1$ are two dimensional regular local rings.
\item[2)] $R_1$ has a regular system of parameters $u_1,v_1$ and $S_1$ has a regular system of parameters
$x_1,y_1$ and there exist units $\gamma_1,\tau_1 \in S_1$ such that
$$
u_1=\gamma_1 x_1^ay_1^b\mbox{ and }v_1=\tau_1x_1^cy_1^d
$$
where 
$$
e=e(\nu^*/\nu)=|\Gamma_{\nu^*}/\Gamma_{\nu}|=\left|\mbox{Det}\left(\begin{array}{cc} a&b\\ c&d\end{array}\right)\right|
$$
and the classes of $\nu^*(x_1), \nu^*(y_1)$ are generators of the group $\Gamma_{\nu^*}/\Gamma_{\nu}\cong\ZZ^2/A\ZZ^2$.
\item[3)] $V_{\nu^*}/m_{\nu^*}$ is the join $V_{\nu^*}/m_{\nu^*}=(V_{\nu}/m_{\nu})(S_1/m_{S_1})$ and
$[S_1/m_{S_1}:R_1/m_{R_1}]=f(\nu^*/\nu)=[V_{\nu^*}/m_{\nu^*}:V_{\nu}/m_{\nu}]$.
\end{enumerate}
\end{Theorem}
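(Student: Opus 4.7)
The plan is to run the rational-rank-$2$ monomialization procedure from page 40 of \cite{CP} in our excellent setting, and then use Proposition \ref{Prop11} in place of Proposition \ref{Prop10} inside the Galois-theoretic degree computation of Proposition \ref{Prop32} to pin down $\delta(\nu^*/\nu)=0$.

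First I would reduce to the case where $R$ and $S$ are both regular: by resolution of singularities for two-dimensional excellent local rings (\cite{Li}, \cite{CJS}) together with Theorem \ref{ThmPre1} and Lemma \ref{LemmaPre2}, a sequence of quadratic transforms along $\nu$ and along $\nu^*$ brings $R$ and $S$ into the regular category while remaining dominated by $\nu^*$. Once regular, the rational rank $2$ hypothesis guarantees that for parameters $x,y$ in $S$ the values $\nu^*(x),\nu^*(y)$ are $\mathbb{Q}$-linearly independent, while the residue field hypothesis forces $V_{\nu^*}/m_{\nu^*}$ to be algebraic over $S/m_S$. Iterating quadratic transforms along $\nu^*$ and $\nu$ as in \cite{CP}, the Perron-like algorithm puts the extension into the monomial form
\begin{equation*}
u_1=\gamma_1 x_1^a y_1^b,\qquad v_1=\tau_1 x_1^c y_1^d,
\end{equation*}
with $\gamma_1,\tau_1$ units in $S_1$ and $ad-bc\neq 0$; moreover, after one more quadratic transform along $\nu^*$ we may arrange that $\nu^*(x_1),\nu^*(y_1)$ generate $\Gamma_{\nu^*}$, and correspondingly that $\nu(u_1),\nu(v_1)$ generate $\Gamma_{\nu}$.

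Granted this form, the structural statements fall out directly. Conclusion (2) is immediate from the algorithm, and the index computation
\begin{equation*}
|\Gamma_{\nu^*}/\Gamma_{\nu}|=\left|\det\begin{pmatrix}a & b\\ c & d\end{pmatrix}\right|=|ad-bc|
\end{equation*}
follows because $\nu^*(u_1),\nu^*(v_1)$ are expressed from $\nu^*(x_1),\nu^*(y_1)$ by the matrix $\left(\begin{smallmatrix} a & b\\ c & d\end{smallmatrix}\right)$ and each pair generates its respective value group. To reach conclusion (3) I would, before starting the monomialization, enlarge $R$ by a preliminary essentially-finite-type local ring $R''\subset V_{\nu}$ adjoining lifts of a $V_{\nu}/m_{\nu}$-basis of $V_{\nu^*}/m_{\nu^*}$ (such a basis is finite since $V_{\nu^*}/m_{\nu^*}$ is algebraic over $S/m_S$ and $S$ is Noetherian, and we are in the $f$-finite regime); these elements then lie in $S_1$ for every $S_1$ as above, forcing $V_{\nu^*}/m_{\nu^*}=(V_{\nu}/m_{\nu})(S_1/m_{S_1})$ and hence $[S_1/m_{S_1}:R_1/m_{R_1}]\ge f(\nu^*/\nu)$.

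The crux is the defect statement $\delta(\nu^*/\nu)=0$. Here I follow the proof of Proposition \ref{Prop32} verbatim, only replacing its invocation of Proposition \ref{Prop10} by Proposition \ref{Prop11}: after reducing to the Galois case and choosing $R_0\subset R_1$ normal with $S_1$ lying over $R_0$ via Proposition \ref{Prop11}, the splitting-field identity
\begin{equation*}
[K^*:K^s(\nu^*/\nu)]=e(\nu^*/\nu)f(\nu^*/\nu)p^{\delta(\nu^*/\nu)}
\end{equation*}
combines with the unramified extension $R_0\to S'=K^s(\nu^*/\nu)\cap S_1$ and (II) of Proposition~1 of \cite{LU} to give
\begin{equation*}
|ad-bc|\,[S_1/m_{S_1}:R_1/m_{R_1}]=e(\nu^*/\nu)f(\nu^*/\nu)p^{\delta(\nu^*/\nu)},
\end{equation*}
the non-Galois case then following exactly as in Proposition \ref{Prop32}. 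But we have already established $|ad-bc|=e(\nu^*/\nu)$ and $[S_1/m_{S_1}:R_1/m_{R_1}]\ge f(\nu^*/\nu)$, so multiplicativity forces $[S_1/m_{S_1}:R_1/m_{R_1}]=f(\nu^*/\nu)$ and $p^{\delta(\nu^*/\nu)}=1$, i.e. $\delta(\nu^*/\nu)=0$. The main obstacle I anticipate is book-keeping in the excellent (non-$k$-algebra) setting during the monomialization loop, where coefficient fields are unavailable and one must instead argue in completions using excellence to keep normality, finiteness of integral closures, and Zariski's main theorem (10.7) \cite{RES} in play — precisely the role Proposition \ref{Prop11} is designed to handle.
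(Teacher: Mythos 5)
Your overall strategy matches the paper's --- run the rational-rank-$2$ monomialization from page 40 of \cite{CP}, then derive $\delta(\nu^*/\nu)=0$ from the degree formula obtained by running the Galois-theoretic computation of Proposition \ref{Prop32} with Proposition \ref{Prop11} substituted for Proposition \ref{Prop10} --- and this substitution is exactly right. However, there is a concrete gap at the final step. From $|ad-bc|=e(\nu^*/\nu)$ and the formula
$$
|ad-bc|\,[S_1/m_{S_1}:R_1/m_{R_1}]=e(\nu^*/\nu)f(\nu^*/\nu)p^{\delta(\nu^*/\nu)},
$$
you obtain $[S_1/m_{S_1}:R_1/m_{R_1}]=f(\nu^*/\nu)p^{\delta(\nu^*/\nu)}$. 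The lower bound $[S_1/m_{S_1}:R_1/m_{R_1}]\ge f(\nu^*/\nu)$ coming from your basis-adjunction argument then only yields $p^{\delta(\nu^*/\nu)}\ge 1$, which is vacuous; it does not force $\delta(\nu^*/\nu)=0$. This is where the rational rank $2$ case differs essentially from the rational rank $1$ case of Corollary \ref{Prop41}: there $\delta(\nu^*/\nu)=0$ is a hypothesis, so the degree formula provides an exact upper bound that squeezes the inequalities $a\ge e$, $d\ge 1$, $[S_1/m_{S_1}:R_1/m_{R_1}]\ge f$ into equalities. Here $\delta$ is the unknown, so the same squeezing does not apply.

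To close the gap you need the exact equality $[S_1/m_{S_1}:R_1/m_{R_1}]=f(\nu^*/\nu)$, established independently of $\delta$. This is available precisely because $\nu^*$ (and hence $\nu$) is an Abhyankar valuation: by Theorem 1 of \cite{Ab1}, $V_{\nu}/m_{\nu}$ is a finite extension of $R/m_R$ and $V_{\nu^*}/m_{\nu^*}$ is a finite extension of $S/m_S$, and since $V_{\nu}=\cup R_i$ and $V_{\nu^*}=\cup S_i$ along the quadratic sequences (Lemma \ref{LemmaPre2}), a further finite number of quadratic transforms makes $R_1/m_{R_1}=V_{\nu}/m_{\nu}$ and $S_1/m_{S_1}=V_{\nu^*}/m_{\nu^*}$, so that $[S_1/m_{S_1}:R_1/m_{R_1}]=f(\nu^*/\nu)$ on the nose; this is what the algorithm on page 40 of \cite{CP} actually accomplishes and what conclusion 3) of the theorem asserts. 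Your basis-adjunction trick gives $V_{\nu^*}/m_{\nu^*}=(V_{\nu}/m_{\nu})(S_1/m_{S_1})$ and the lower bound, but does not control $[S_1/m_{S_1}:R_1/m_{R_1}]$ from above unless $R_1/m_{R_1}$ already equals $V_{\nu}/m_{\nu}$. With the equality in hand, your computation does give $p^{\delta(\nu^*/\nu)}=1$, and the rest of the proposal is sound.
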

\vskip .2truein
There exists a diagram (\ref{eq622}) such that the  conditions 1), 2) and 3) of Theorem \ref{Theorem5} are stable under further appropriate sequences of quadratic transforms above $R$ and $S$.

\subsection{Proof of Theorem \ref{Theorem7}}

\vskip .2truein
We now give the proof of Theorem \ref{Theorem7}.  
$$
\mbox{trdeg}_{S/m_S}V_{\nu^*}/m_{\nu^*}+\dim_{\QQ}\Gamma_{\nu^*}\otimes \QQ\le 2
$$
by Abhyankar's inequality (Theorem 1 \cite{Ab1}). 

If $\mbox{trdeg}_{R/m_R}S/m_S=1$, then $V_{\nu^*}$ and $V_{\nu}$ are DVRs and algebraic local rings of $K^*$ and $K$ respectively, so that $V_{\nu}\rightarrow V_{\nu^*}$ is a monomial mapping, which is obtained from $R$, respectively $S$ by a sequence of quadratic transforms along $\nu^*$ (there exists a sequence of quadratic transforms $R\rightarrow R^*$ along $\nu$ such that $V_{\nu}/m_{\nu}$ is algebraic over $R/m_R$. Thus $R^*=V_{\nu}$ be Zariski's Main Theorem, (10.7) \cite{RES}). (For this type of valuation  we must have that $\delta(\nu^*/\nu)=0$
by Theorem 20, Section 11, Chapter VI \cite{ZS2}).

Suppose that $\dim_{\QQ}\Gamma_{\nu^*}\otimes \QQ=2$. Then $V_{\nu^*}/m_{\nu^*}$ is algebraic over $S/m_S$ by Abhyankar's inequality. In this case there exists a monomialization by Theorem \ref{Theorem5} (we also always have that $\delta(\nu^*/\nu)=0$ for this type of valuation by Theorem \ref{Theorem5}).

The remaining case is when $V_{\nu^*}/m_{\nu^*}$ is algebraic over $S/m_S$, $\dim_{\QQ}\Gamma_{\nu^*}\otimes\QQ=1$
and $\delta(\nu^*/\nu)=0$. The existence of a monomialization in this case follows from Theorem \ref{Theorem2}.

\section{Extensions of Associated Graded Rings of Valuations} 
In this section, we extend the results of \cite{GHK} and \cite{GK} calculating the extension of associated graded rings of a valuation for defectless extensions of 2 dimensional algebraic function fields over an algebraically closed field, and of \cite{CV1} for 2 dimensional algebraic function fields over a (not necessarily closed) characteristic zero field.
We refer to the introduction of this paper for a discussion of this problem.

We recall the following theorem on strong monomialization of rational rank 1 valuations in an extension of characteristic zero function fields from \cite{C}.

\begin{Theorem}\label{Theorem1}(The rational rank 1 case of Theorem 5.1 \cite{C}, Theorem 6.1 \cite{CP} and Theorem 6.5 \cite{CG})
Let $K$ an algebraic function field over a field $k$ of characteristic zero, $K^*$ a finite algebraic extension of $K$ and $\nu^*$ a rational rank 1  $k$ valuation of $K^*$. Suppose that $S^*$ is an algebraic local ring with quotient field $K^*$ which is dominated by $\nu^*$ and $R^*$ is an algebraic local ring with quotient field $K$ which is dominated by $S^*$. Let $\nu$ be the restriction of $\nu^*$ to $K$. Then there exists a commutative diagram  
\begin{equation}\label{eq2}
\begin{array}{ccccc}
R&\rightarrow& S&\subset & V_{\nu^*}\\
\uparrow&&\uparrow&&\\
R^*&\rightarrow &S^*&&
\end{array}
\end{equation}
such that the vertical arrow are product of monoidal transforms along $\nu^*$ and
\begin{enumerate}
\item[1)] $R$ and $S$ are regular local rings of dimension equal to $n=\mbox{trdeg}_kV_{\nu^*}/m_{\nu^*}$.
\item[2)] $R$ has a regular system of parameters $x_1,\ldots,x_n$ and $S$ has a regular system of parameters
$y_1,\ldots,y_n$ and there exists a unit $\delta \in S$ such that
$$
x_1=\delta y_1^e\mbox{ and }x_i=y_i\mbox{ for }i\ge 2
$$
where $e=e(\nu^*/\nu)=|\Gamma_{\nu^*}/\Gamma_{\nu}|$.
\item[3)] The class of $\nu(y_1)$ is a generator of the group $\Gamma_{\nu^*}/\Gamma_{\nu}$.
\item[4)]  $V_{\nu^*}/m_{\nu^*}=(V_{\nu}/m_{\nu})(S/m_{S})$.
\item[5)] $[S/m_S:R/m_R]=f=f(\nu^*/\nu)=[V_{\nu^*}/m_{\nu^*}:V_{\nu}/m_{\nu}]$ 
\item[6)] The  conclusions of  1) and 3) - 5) of the theorem continue to hold for $R_1\rightarrow S_1$ whenever
there exists a commutative diagram  
\begin{equation}\label{eq3}
\begin{array}{ccccc}
R_1&\rightarrow& S_1&\subset & V_{\nu^*}\\
\uparrow&&\uparrow&&\\
R&\rightarrow &S&&
\end{array}
\end{equation}
such that the vertical arrow are product of monoidal transforms along $\nu^*$ and 2) holds for $R_1\rightarrow S_1$.
\end{enumerate}
\end{Theorem}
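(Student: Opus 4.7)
The plan is to derive the strong monomial form as a refinement of the general local monomialization theorem of \cite{C}, exploiting that we are in characteristic zero (so $\delta(\nu^*/\nu) = 0$) and that $\nu^*$ has rational rank one. First I would invoke Theorem 1.1 of \cite{C} to produce a local monomialization of $R^* \to S^*$ along $\nu^*$: regular parameters $x_1, \ldots, x_n$ in some regular $R_0$ dominating $R^*$, regular parameters $y_1, \ldots, y_n$ in some regular $S_0$ dominating $S^*$, units $\delta_i \in S_0$, and a nonsingular matrix $A = (a_{ij}) \in M_n(\NN)$ with $x_i = \delta_i \prod_j y_j^{a_{ij}}$, where both vertical arrows are products of monoidal transforms along $\nu^*$. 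Because $\nu^*$ has rational rank $1$, the positive values $\nu^*(y_1), \ldots, \nu^*(y_n)$ are pairwise commensurate.

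Second, I would perform further coordinated quadratic transforms along $\nu^*$ on both $R_0$ and $S_0$ to simplify the matrix $A$ into the block form $(e) \oplus I_{n-1}$ required by conclusion 2). The strategy is to apply Perron-type substitutions upstairs and downstairs that decouple the ramified direction from the others: after rational dependence of the values singles out the generator (call it $y_1$ upstairs and $x_1$ downstairs), the remaining parameters can be adjusted by translations and blow-ups so that $x_i = y_i$ for $i \geq 2$, concentrating all ramification in the first variable. The rational rank one hypothesis is precisely what guarantees that this reduction terminates in finitely many steps.

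Third, I would pin down the arithmetic invariants. Once the form $x_1 = \delta y_1^e$, $x_i = y_i$ for $i \geq 2$ is achieved, the characteristic-zero degree formula (zero defect) gives
\[
[\mbox{QF}(\hat S) : \mbox{QF}(\hat R)] = e(\nu^*/\nu)\, f(\nu^*/\nu),
\]
and a direct ramification computation in the monomial form, entirely analogous to Proposition \ref{Prop10}, equates this with $e \cdot [S/m_S : R/m_R]$. By performing a few extra quadratic transforms upstairs I arrange a set of representatives for $V_{\nu^*}/m_{\nu^*}$ over $V_{\nu}/m_{\nu}$ to lie in $S$, forcing $V_{\nu^*}/m_{\nu^*} = (V_{\nu}/m_{\nu})(S/m_S)$ and $[S/m_S : R/m_R] \geq f(\nu^*/\nu)$. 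Since $e\, \nu^*(y_1) = \nu(x_1) \in \Gamma_\nu$ we also have $e \geq e(\nu^*/\nu)$, and these two inequalities combined with the degree identity force equality in conclusions 3), 4) and 5). The stability statement 6) then follows because $e(\nu^*/\nu)$ and $f(\nu^*/\nu)$ depend only on $\nu^*/\nu$, while the residue-field join and the generator property of $\nu^*(y_1)$ persist because the relevant residue-field representatives remain in any further dominating $S_1$.

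The step I expect to be the main obstacle is the matrix reduction in the second stage: bringing an arbitrary nonsingular $A \in M_n(\NN)$ into the block form $(e) \oplus I_{n-1}$ via coordinated transforms that are genuinely products of monoidal transforms along $\nu^*$ (not mere abstract row/column operations) requires tight control of how the blow-ups interact with the fixed valuation, and this is exactly where the rational rank one hypothesis (guaranteeing termination) and the characteristic zero hypothesis (preventing defect-induced obstructions to the final normalization) both earn their keep.
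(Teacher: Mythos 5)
The theorem you are attempting to prove is one that the paper does not prove at all: it is explicitly recalled from Theorem~5.1 of \cite{C} (with companion statements in \cite{CP} and \cite{CG}), and the body of the present paper uses it as a black box. So there is no ``paper's own proof'' to compare against; the right comparison is with \cite{C}, and against that reference your strategy runs backwards. In \cite{C} the strong monomial form of Theorem~5.1 is the primary theorem, proved directly by a careful inductive analysis of the valuation, and the plain local monomialization of Theorem~1.1 is a \emph{corollary}. Your plan proposes to invoke Theorem~1.1 first and then upgrade a generic nonsingular exponent matrix $A$ to $(e)\oplus I_{n-1}$ by further coordinated blow-ups. Nothing in Theorem~1.1's output controls how $A$ evolves under subsequent monoidal transforms on both sides simultaneously, and the Perron/Zariski algorithm you gesture at only makes the entries of one row rationally commensurate; it gives no mechanism for decoupling the last $n-1$ variables or for forcing them to agree between $R$ and $S$. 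You correctly flag this as the main obstacle, but it is not merely an obstacle --- it is the entire content of the theorem, and bypassing it by citing ``rational rank one guarantees termination'' collapses the hardest step into a slogan.

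Beyond the structural inversion, there are two concrete gaps. First, conclusion 1) asserts that $\dim R = \dim S = n = \mbox{trdeg}_k V_{\nu^*}/m_{\nu^*}$, and since $\nu^*$ has rational rank one, Abhyankar's inequality gives $n < \mbox{trdeg}_k K^*$; so the local rings must actually \emph{drop} in dimension under the blow-ups (correspondingly the residue fields $S/m_S$ must grow). Your proposal never explains how to reach, let alone stabilize at, the correct dimension --- Theorem~1.1 of \cite{C} does not hand you local rings of this specific dimension, and achieving it requires the residue-field/transcendence-degree accounting that is an essential part of the proof of Theorem~5.1. Second, your derivation of $e \ge e(\nu^*/\nu)$ from $e\,\nu^*(y_1) = \nu(x_1) \in \Gamma_{\nu}$ is a non-sequitur: this relation only shows that the order of $\nu^*(y_1)$ in $\Gamma_{\nu^*}/\Gamma_{\nu}$ divides $e$, which is consistent with $e$ being much smaller than $e(\nu^*/\nu)$ unless you have independently established conclusion 3) (that $\nu^*(y_1)$ generates the quotient group). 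In the present paper's two-dimensional analogue (Proposition~\ref{Prop40}) the generator property is manufactured in advance by arranging that a chosen element $h$ with $\nu^*(h)$ a generator has the form $\tau x^n$ in $S_1$; your proposal has no step that plays this role. Finally, your appeal to a completion-degree formula ``analogous to Proposition~\ref{Prop10}'' is not available as stated: that proposition is a two-dimensional argument, and transplanting it to arbitrary dimension $n$ would require a separate proof.
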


We now show that with  the assumptions of the conclusions of Theorem \ref{Theorem1}, we further have a very simple
description of the extension of associated graded rings of the valuations.

\begin{Theorem}\label{Theorem4} Let assumptions be as in Theorem \ref{Theorem1}, and let $R\rightarrow S$ be as in the conclusions of Theorem \ref{Theorem1}.   Then we have a natural isomorphism of graded rings
$$
{\rm gr}_{\nu^*}(S)\cong \left({\rm gr}_{\nu}(R)\otimes_{R/m_R}S/m_S\right)[Z]/(Z^e-[\delta]^{-1}[x_1]),
$$
where $[\gamma_1], [x_1]$ are the respective classes in ${\rm gr}_{\nu}(R_1)\otimes_{R_1/m_{R_1}}S_1/m_{S_1}$.
The degree of the extension of quotient fields of ${\rm gr}_{\nu}(R)\rightarrow {\rm gr}_{\nu^*}(S)$ is $e(\nu^*/\nu)f(\nu^*/\nu)$.
\end{Theorem}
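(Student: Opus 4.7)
The plan is to construct and analyze the natural graded ring homomorphism
$$
\Psi:\; A\;:=\;\left({\rm gr}_{\nu}(R)\otimes_{R/m_R}S/m_S\right)[Z]/(Z^e-[\delta]^{-1}[x_1])\;\longrightarrow\;{\rm gr}_{\nu^*}(S),
$$
with $Z\mapsto[y_1]$, and prove it is an isomorphism. The map is well-defined: the inclusion $R\hookrightarrow S$ preserves valuations (since $\nu=\nu^*|_K$), giving a graded homomorphism ${\rm gr}_{\nu}(R)\to{\rm gr}_{\nu^*}(S)$, which combined with $S/m_S=({\rm gr}_{\nu^*}(S))_0$ extends to the required map from the tensor product. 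Writing $\delta=\overline{\delta}+(\delta-\overline{\delta})$ with $\delta-\overline{\delta}\in m_S$, the error $(\delta-\overline{\delta})y_1^e$ has $\nu^*$-value strictly greater than $e\nu^*(y_1)=\nu(x_1)$, so $[x_1]_S=\overline{\delta}\,[y_1]_S^e$ in ${\rm gr}_{\nu^*}(S)$ and the defining relation of $A$ is respected.

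Injectivity then follows from a $\Gamma_{\nu^*}$-grading argument. By conclusion (3) of Theorem \ref{Theorem1}, $\nu^*(y_1)$ generates $\Gamma_{\nu^*}/\Gamma_{\nu}\cong\ZZ/e\ZZ$, so $[y_1]^k$ for $0\le k<e$ lie in $e$ distinct cosets of $\Gamma_{\nu}$ in $\Gamma_{\nu^*}$; any putative kernel element therefore decomposes degree by degree into separate contributions $h_k[y_1]^k$, each of which must vanish individually. Matters reduce to injectivity in each degree $\alpha\in S^R(\nu)$ of the natural map ${\rm gr}_{\nu}(R)\otimes_{R/m_R}S/m_S\to{\rm gr}_{\nu^*}(S)$. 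Fixing a generator $[t_\alpha]$ of the one-dimensional $V_{\nu}/m_{\nu}$-space $({\rm gr}_{\nu}(V_{\nu}))_\alpha$ and a basis $\{e_l\}$ of $({\rm gr}_{\nu}(R))_\alpha$ over $R/m_R$, one writes $e_l=r_l\,[t_\alpha]$ with $r_l\in V_{\nu}/m_{\nu}$ that are $R/m_R$-linearly independent. Conclusions (4) and (5) of Theorem \ref{Theorem1} give $V_{\nu^*}/m_{\nu^*}=(V_{\nu}/m_{\nu})(S/m_S)$ with $[V_{\nu^*}/m_{\nu^*}:V_{\nu}/m_{\nu}]=f=[S/m_S:R/m_R]$; this forces linear disjointness of $S/m_S$ and $V_{\nu}/m_{\nu}$ over $R/m_R$, so the $\{r_l\}$ remain $S/m_S$-linearly independent in $V_{\nu^*}/m_{\nu^*}$, giving the required injectivity.

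Surjectivity is the main difficulty. I plan to argue via completions: in characteristic zero Cohen's structure theorem gives $\hat R=(R/m_R)[[x_1,\ldots,x_n]]$ and $\hat S=(S/m_S)[[y_1,\ldots,y_n]]$. Since $m_R\hat S=(y_1^e,y_2,\ldots,y_n)\hat S$ is $m_{\hat S}$-primary and both completions are regular, $\hat R\to\hat S$ is module-finite and flat, hence free of rank $ef$, with a basis $\{y_1^k s_j:0\le k<e,\;1\le j\le f\}$ for chosen lifts $s_j\in S$ of an $R/m_R$-basis of $S/m_S$. For $w\in S$ with $\nu^*(w)=\gamma$, expand $w=\sum_{k,j}a_{kj}y_1^ks_j$ with $a_{kj}\in\hat R$. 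In ${\rm gr}_{\nu^*}(\hat S)$, each term $a_{kj}y_1^ks_j$ has initial form in a degree lying in the coset $k\nu^*(y_1)+\Gamma_{\nu}$; since distinct cosets are disjoint for $0\le k<e$, the initial form of $w$ in degree $\gamma$ equals $[y_1]^k\cdot\sum_j[a_{kj}]\,s_j$ for the unique $k\in[0,e)$ with $\gamma\in k\nu^*(y_1)+\Gamma_{\nu}$, and this lies in $\Psi(A)$. The principal obstacle is to justify that the $\nu$-adic filtration on $R$ passes well to $\hat R$ so that ${\rm gr}_{\nu}(\hat R)\cong{\rm gr}_{\nu}(R)$ (and similarly ${\rm gr}_{\nu^*}(\hat S)\cong{\rm gr}_{\nu^*}(S)$), which is where the excellence of $R$ and $S$ enters.

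Finally, the extension of quotient fields has degree $e(\nu^*/\nu)f(\nu^*/\nu)$: by the identities $\mbox{QF}({\rm gr}_{\nu}(R))=\mbox{QF}({\rm gr}_{\nu}(V_{\nu}))$ and $\mbox{QF}({\rm gr}_{\nu^*}(S))=\mbox{QF}({\rm gr}_{\nu^*}(V_{\nu^*}))$ noted in the introduction, combined with the standard formula $[\mbox{QF}({\rm gr}_{\nu^*}(V_{\nu^*})):\mbox{QF}({\rm gr}_{\nu}(V_{\nu}))]=[V_{\nu^*}/m_{\nu^*}:V_{\nu}/m_{\nu}]\cdot[\Gamma_{\nu^*}:\Gamma_{\nu}]=fe$ coming from the description of ${\rm gr}_{\nu}(V_{\nu})$ and ${\rm gr}_{\nu^*}(V_{\nu^*})$ as (twisted) semigroup algebras over their residue fields.
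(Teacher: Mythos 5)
Your proposal uses the same two key ingredients as the paper's proof: the observation that $\nu^*(y_1)^k$ for $0\le k<e$ lies in $e$ disjoint cosets of $\Gamma_\nu$ in $\Gamma_{\nu^*}$, and the linear independence over $V_\nu/m_\nu$ of elements $\gamma_1,\ldots,\gamma_f\in S$ whose residues form an $R/m_R$-basis of $S/m_S$ (your ``linear disjointness,'' which is a reformulation of the paper's formula $\nu^*(\sum\gamma_ih_i)=\min\nu(h_i)$). Injectivity is argued the same way. The place where you diverge is surjectivity, and there you have correctly identified the sticking point but left it unresolved: you pass to $\hat R\to\hat S$, expand $w=\sum a_{kj}y_1^ks_j$ with $a_{kj}\in\hat R$, and then need to interpret $\nu(a_{kj})$ and claim ${\rm gr}_\nu(\hat R)\cong{\rm gr}_\nu(R)$. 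The valuation $\nu$ does not extend to $\hat R$, so this last step is not merely a technicality; it needs an actual argument, and you only flag it as the ``principal obstacle.''

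The paper sidesteps the completion entirely with a truncation. Given $z\in S$ with $\nu^*(z)=\gamma$, choose $t$ with $t\,\nu^*(m_S)>\gamma$; then anything in $m_S^t$ has value strictly greater than $\gamma$ and cannot affect the initial form. One checks $m_S^{et}\subset m_R^tS$ (from $m_RS=(y_1^e,y_2,\ldots,y_n)$), so $S/m_S^t$ is a quotient of a free module over $R/(\text{some power of }m_R)$ with basis $\{\gamma_iy_1^j: 1\le i\le f,\ 0\le j<e\}$, and one obtains a finite expression $z=\sum_{j=0}^{e-1}\sum_{i=1}^{f}g_{ij}\gamma_iy_1^j+w$ with $g_{ij}\in R$ (not $\hat R$) and $w\in m_S^t$. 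Now (12) and the coset-disjointness give $\nu^*(z)=\min\{\nu(g_{ij})+j\nu^*(y_1)\}$, and the initial form of $z$ is visibly in the image of $\Psi$ and of the form $[y_1]^k\sum_i[g_{ik}]\overline\gamma_i$. This replaces your completion step by a finite-level computation and removes the need for any statement about ${\rm gr}_\nu(\hat R)$. I would recommend rewriting the surjectivity step along these lines; the rest of your argument, including the well-definedness of $\Psi$ (from $[x_1]=[\delta][y_1]^e$), the degree-by-degree injectivity, and the quotient-field degree formula via ${\rm QF}({\rm gr}_\nu(R))={\rm QF}({\rm gr}_\nu(V_\nu))$, matches the paper.
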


\begin{proof} 
Let $\gamma_1,\ldots,\gamma_f\in S$ be such that their residues $\overline \gamma_1,\ldots, \overline \gamma_f$ in $S/m_S$ are  a basis of $S/m_S$ over $R/m_R$.
We now establish the following formula:
\begin{equation}\label{eq12}
\mbox{Suppose that $h_1,\ldots, h_f\in R$. Then $\nu^*(\sum_{i=1}^f\gamma_ih_i)=\min\{\nu(h_i)\}$}.
\end{equation}
Without loss of generality, we may suppose that $\nu(h_1)$ is this   minimum. Then $\frac{h_i}{h_1}\in V_{\nu}$ for all $i$ so we have classes
$$
[\frac{h_i}{h_1}]\in V_{\nu}/m_{\nu}\subset V_{\nu^*}/m_{\nu^*}.
$$ 
If $\nu^*(\sum_{i=1}^f\gamma_ih_i)>\nu^*(h_1)$, then
$$
\sum\overline \gamma_i[\frac{h_i}{h_1}]=0\mbox{ in }V_{\nu^*}/m_{\nu^*},
$$
which is impossible since $\overline \gamma_1,\ldots,\overline \gamma_f$ are a basis of $V_{\nu^*}/m_{\nu^*}$
over $V_{\nu}/m_{\nu}$, by the assumptions of the theorem. We have thus established formula
(\ref{eq12}).

Suppose that $z\in S$. Let $t$ be a positive integer such that $t\nu^*(m_S)>\nu^*(z)$. We have an expression
$$
z=\left(\sum_{j=0}^{e-1}\sum_{i=1}^f g_{ij}\gamma_i y_1^j\right)+w
$$
where $g_{ij}\in R$ for all $i,j$ and $w\in m_S^t$.

We will next establish the following formula:
\begin{equation}\label{eq13}
\nu^*(z)=\min\{\nu(g_{ij})+j\nu^*( y_1)\}.
\end{equation}

With our assumption on $t$,
$$
\nu^*(z)=\nu^*\left(\sum_{j=0}^{e-1}\sum_{i=1}^f g_{ij}\gamma_i y_1^j\right).
$$
Set 
$$
h_j=\sum_{i=1}^fg_{ij}\gamma_i\mbox{ for }0\le j\le e-1.
$$
By (\ref{eq12}), $\nu^*(h_j)\in \Gamma_{\nu}$ for all $j$. For $0\le j\le e-1$, we have
$$
\nu^*(h_k y_1^k)-\nu^*(h_j y_1^j)=(k-j)\nu^*( y_1)+\nu^*(h_k)-\nu^*(h_j).
$$
Since the class of $\nu^*( y_1)$ has order $e$ in $\Gamma_{\nu^*}/\Gamma_{\nu}$, we have that
\begin{equation}\label{eq14}
\nu^*(h_k y_1^k)\ne \nu^*(h_j y_1^j)\mbox{ for }j\ne k.
\end{equation}
Formula (\ref{eq13}) now follows from (\ref{eq14}) and (\ref{eq12}).

For $\gamma\in \Gamma_{\nu}$, we have a homomorphism
$$
\mathcal P_{\gamma}(R)/\mathcal P^+_{\gamma}(R)\otimes_{ R/m_{ R}} S/m_{ S}
\rightarrow
\mathcal P_{\gamma}(S)/\mathcal P^+_{\gamma}(S)
$$
defined by $g_i\otimes\gamma_i\mapsto g_i\gamma_i$. This map is 1-1 by the proof of (\ref{eq12}). Thus
$$
\left(\bigoplus_{\gamma\in\Gamma_{\nu}}\mathcal P_{\gamma}(R)/\mathcal P^+_{\gamma}(R)\right)\otimes_{ R/m_{ R}} S/m_{ S}
$$
is a graded subalgebra of 
$$
\bigoplus_{\tau\in\Gamma_{\nu^*}}\mathcal P_{\tau}(S)/\mathcal P^+_{\tau}(S).
$$
Suppose that $h\in S$. Let $\tau=\nu^*(h)$. By (\ref{eq14}), there exists a unique $i$ with $0\le i\le e-1$ 
and $\gamma\in \Gamma_{\nu}$ such that $\tau=\gamma+i\nu^*( y_1)$. Further, 
$$
h=\left(\sum_{j=1}^f\gamma_jg_j\right) y_1^i+ h_2
$$
where $g_j\in R$ with $\nu(g_j)=\gamma$ for $1\le j\le f$ and $h_2\in S$ satisfies $\nu^*(h_2)>\tau$.
Thus
$$
\mathcal P_{\tau}(S)/\mathcal P^+_{\tau}(S)=
\left(\mathcal P_{\gamma}(R)/\mathcal P^+_{\gamma}(R)\otimes_{ R/m_{ R}} S/m_{ S}\right)[\overline y_1]
$$
where $\overline y_1$ is the class of $y_1$.
We have that 
$$
(\overline y_1)^e-[\delta]^{-1}[x_1]=0
$$
where $[\delta]$, $[x_1]$ are the classes of $\delta$ and $x_1$. All other relations on $\overline y_1$ are divisible by this relation by (\ref{eq13}).

\end{proof}

\begin{Theorem}\label{Theorem3} Let assumptions be as in Theorem \ref{Theorem2}, and let  $R_1\rightarrow S_1$ be as in the conclusions of Theorem \ref{Theorem2}.   Then we have a natural isomorphism of graded rings
$$
{\rm gr}_{\nu^*}(S_1)\cong \left({\rm gr}_{\nu}(R_1)\otimes_{R_1/m_{R_1}}S_1/m_{S_1}\right)[Z]/(Z^e-[\gamma_1]^{-1}[u_1]),
$$
where $[\gamma_1], [u_1]$ are the respective classes in ${\rm gr}_{\nu}(R_1)\otimes_{R_1/m_{R_1}}S_1/m_{S_1}$.
The degree of the extension of quotient fields of ${\rm gr}_{\nu}(R_1)\rightarrow {\rm gr}_{\nu^*}(S_1)$ is $e(\nu^*/\nu)f(\nu^*/\nu)$.
\end{Theorem}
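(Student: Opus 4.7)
The plan is to follow the proof of Theorem \ref{Theorem4} verbatim under the relabeling $y_1 \leftrightarrow x_1$, $\delta \leftrightarrow \gamma_1$, $x_1 \leftrightarrow u_1$, since the stable form produced by Theorem \ref{Theorem2} is formally the same as the one from Theorem \ref{Theorem1} after this substitution. First, fix elements $\gamma_1,\ldots,\gamma_f \in S_1$ whose residues are an $R_1/m_{R_1}$-basis of $S_1/m_{S_1}$. Conclusion 3) of Theorem \ref{Theorem2} gives $V_{\nu^*}/m_{\nu^*} = (V_{\nu}/m_{\nu})(S_1/m_{S_1})$ and $[V_{\nu^*}/m_{\nu^*}:V_{\nu}/m_{\nu}] = f$, so a dimension count forces $\overline{\gamma}_1,\ldots,\overline{\gamma}_f$ to be a $V_{\nu}/m_{\nu}$-basis of $V_{\nu^*}/m_{\nu^*}$; the argument that established (\ref{eq12}) then yields $\nu^*(\sum_i \gamma_i h_i) = \min_i \nu(h_i)$ for $h_i \in R_1$. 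Conclusion 2) of Theorem \ref{Theorem2} says the class of $\nu^*(x_1)$ has order exactly $e$ in $\Gamma_{\nu^*}/\Gamma_{\nu}$, so for $h_0,\ldots,h_{e-1}$ in the $R_1$-span of $\gamma_1,\ldots,\gamma_f$ the values $\nu^*(h_j x_1^j)$ lie in pairwise distinct cosets of $\Gamma_{\nu}$, as in (\ref{eq14}).

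The main obstacle is producing, for any $z \in S_1$ and any target $t > 0$, an expansion
$$
z \;=\; \sum_{j=0}^{e-1}\sum_{i=1}^{f} g_{ij}\,\gamma_i\,x_1^{j} \;+\; w,\qquad g_{ij} \in R_1,\ w \in m_{S_1}^{t}.
$$
In Theorem \ref{Theorem4} this was available from algebraic finiteness of $S$ over $R$ together with the presence of coefficient fields; in our excellent setting neither is at hand, and I will pass to the completions. Applying Proposition \ref{Prop10} to $R_1 \to S_1$ with $a = e$, $b = 0$, $d = 1$ (the $b = 0$ case of its proof, in which one takes $R_0 = R_1$) gives that $\hat S_1$ is a finite $\hat R_1$-module with $[\mbox{QF}(\hat S_1):\mbox{QF}(\hat R_1)] = ef$. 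A direct filtration of the artinian ring $S_1/m_{R_1}S_1 = S_1/(x_1^{e},y_1)S_1$ by powers of $x_1$ shows that the $ef$ monomials $\{\gamma_i x_1^{j} : 1 \le i \le f,\ 0 \le j < e\}$ span this quotient as an $R_1/m_{R_1}$-vector space, hence also span $\hat S_1/m_{\hat R_1}\hat S_1$; by Nakayama's lemma in the complete situation they generate $\hat S_1$ over $\hat R_1$. Writing $z = \sum c_{ij}\,\gamma_i\,x_1^{j}$ in $\hat S_1$ with $c_{ij} \in \hat R_1$ and choosing $g_{ij} \in R_1$ with $c_{ij} - g_{ij} \in m_{R_1}^{t}\hat R_1$, the resulting error lies in $m_{R_1}^{t}\hat S_1 \cap S_1 \subseteq m_{S_1}^{t}\hat S_1 \cap S_1 = m_{S_1}^{t}$ by faithful flatness of $S_1 \to \hat S_1$, delivering the expansion.

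With this expansion in hand the remainder of the proof is formal. Choosing $t$ so that $t\nu^*(m_{S_1}) > \nu^*(z)$, the two identities of the first paragraph combine to force
$$
\nu^*(z) \;=\; \min\bigl\{\nu(g_{ij}) + j\,\nu^*(x_1)\bigr\},
$$
the analog of (\ref{eq13}); in particular every $\tau \in S^{S_1}(\nu^*)$ has a unique decomposition $\tau = \gamma + j\nu^*(x_1)$ with $\gamma \in S^{R_1}(\nu)$ and $0 \le j < e$. The map
$$
\mathcal P_{\gamma}(R_1)/\mathcal P^+_{\gamma}(R_1) \otimes_{R_1/m_{R_1}} S_1/m_{S_1} \;\longrightarrow\; \mathcal P_{\tau}(S_1)/\mathcal P^+_{\tau}(S_1),\qquad r \otimes \overline{s} \;\longmapsto\; [r s\, x_1^{j}],
$$
is injective by the no-cancellation identity and surjective by the expansion. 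Summing over $\tau$, $\mathrm{gr}_{\nu^*}(S_1)$ is free of rank $e$ over $\mathrm{gr}_{\nu}(R_1) \otimes_{R_1/m_{R_1}} S_1/m_{S_1}$ with $[x_1]$ a generator satisfying the single relation $[x_1]^{e} = [\gamma_1]^{-1}[u_1]$ coming from $u_1 = \gamma_1 x_1^{e}$, which is the claimed presentation with $Z = [x_1]$. Passing to quotient fields yields a degree $e \cdot f = e(\nu^*/\nu) f(\nu^*/\nu)$ extension, completing the argument.
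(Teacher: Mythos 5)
Your proof is correct and tracks the same route the paper takes --- indeed, the paper's entire proof of Theorem \ref{Theorem3} is the single line ``The proof is exactly the same as the proof of Theorem \ref{Theorem4}.'' The one step where you add genuine content is the justification of the finite expansion $z = \sum_{j<e}\sum_{i\le f} g_{ij}\gamma_i x_1^j + w$ with $g_{ij}\in R_1$ and $w\in m_{S_1}^t$, which the paper asserts without proof already in Theorem \ref{Theorem4} and does not revisit in the excellent setting. Your instinct to recheck this is sound, and your completion argument (finiteness of $\hat S_1$ over $\hat R_1$ from the $b=0$ case of Proposition \ref{Prop10}, spanning of $S_1/m_{R_1}S_1$, Nakayama, density of $R_1$ in $\hat R_1$, faithful flatness to pull the error back to $m_{S_1}^t$) is correct. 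It is, however, heavier than necessary: once one knows the $ef$ elements $\gamma_i x_1^j$ span $S_1/m_{R_1}S_1$ over $R_1/m_{R_1}$ --- which follows from $m_{R_1}S_1 = (x_1^e,y_1)S_1$ and the filtration by powers of $x_1$, as you observe --- one can write $z = \sum a^{(0)}_{ij}\gamma_i x_1^j + z_1$ with $z_1\in m_{R_1}S_1$, expand the $S_1$-factors of $z_1$ the same way to push the remainder into $m_{R_1}^2 S_1$, and iterate $t$ times; the final remainder lies in $m_{R_1}^t S_1\subseteq m_{S_1}^t$. This avoids completions entirely and uses no special features of the setting. The remaining steps of your argument --- the no-cancellation formula (\ref{eq12}), the coset separation (\ref{eq14}), the value formula (\ref{eq13}), injectivity and surjectivity of the graded maps, and the relation $[x_1]^e = [\gamma_1]^{-1}[u_1]$ --- are word for word the argument of Theorem \ref{Theorem4}, as intended.
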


\begin{proof} The proof is exactly the same as the proof of Theorem \ref{Theorem4}. 
\end{proof}

We also obtain from Theorem \ref{Theorem5} the following result, showing that even in positive and mixed characteristic, the associated graded rings of  Abhyankar valuations
dominating a stable extension of two dimensional excellent regular local rings have a nice form. 

\begin{Theorem}\label{Theorem6} Let assumptions be as in Theorem \ref{Theorem5}, and let  $R_1\rightarrow S_1$ be as in the conclusions of Theorem \ref{Theorem5}.   Then we have a natural isomorphism of graded rings
$$
{\rm gr}_{\nu^*}(S_1)\cong \left({\rm gr}_{\nu}(R_1)\otimes_{R_1/m_{R_1}}S_1/m_{S_1}\right)[X,Y]/(X^aY^b-[\gamma_1]^{-1}[u_1],
X^cY^d-[\tau_1]^{-1}[v_1]),
$$
where $[\gamma_1], [\tau_1], [u_1], [v_1]$ are the respective classes in ${\rm gr}_{\nu}(R_1)\otimes_{R_1/m_{R_1}}S_1/m_{S_1}$.
The degree of the extension of quotient fields of ${\rm gr}_{\nu}(R_1)\rightarrow {\rm gr}_{\nu^*}(S_1)$ is $e(\nu^*/\nu)f(\nu^*/\nu)$.
\end{Theorem}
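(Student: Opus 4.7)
The plan is to adapt the proof of Theorem \ref{Theorem3}—itself identical to the proof of Theorem \ref{Theorem4}—to the rational-rank-$2$ setting, with the one-parameter monomials $y_1^j$ ($0 \le j \le e-1$) of that argument replaced by two-parameter monomials $x_1^{\alpha_k}y_1^{\beta_k}$ ($k=1,\ldots,e$) indexed by coset representatives of $\Gamma_{\nu^*}/\Gamma_\nu \cong \ZZ^2/A\ZZ^2$.

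First I would fix $\gamma_1,\ldots,\gamma_f \in S_1$ whose residues form a basis of $S_1/m_{S_1}$ over $R_1/m_{R_1}$; by condition 3) of Theorem \ref{Theorem5} these represent simultaneously a basis of $V_{\nu^*}/m_{\nu^*}$ over $V_\nu/m_\nu$. The verbatim argument of Theorem \ref{Theorem4} then delivers the basic no-cancellation identity
$$
\nu^*\!\left(\sum_{i=1}^{f}\gamma_i h_i\right) \;=\; \min_i \nu(h_i), \qquad h_i \in R_1.
$$
Next I would choose $e = |\det A|$ lattice points $(\alpha_k,\beta_k) \in \ZZ_{\ge 0}^2$ whose values $\alpha_k\nu^*(x_1)+\beta_k\nu^*(y_1)$ run through the $e$ distinct cosets of $\Gamma_\nu$ in $\Gamma_{\nu^*}$ (condition 2) of Theorem \ref{Theorem5}), and such that simultaneously the monomials $x_1^{\alpha_k}y_1^{\beta_k}$ form a basis of $S_1/(u_1,v_1)S_1$ as $S_1/m_{S_1}$-module. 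That a joint choice exists follows from the explicit staircase of the ideal $(x_1^ay_1^b,x_1^cy_1^d)$, together with the length count $[S_1/(u_1,v_1)S_1 : R_1/m_{R_1}] = ef$ coming from Proposition \ref{Prop11} and the defectless hypothesis $\delta(\nu^*/\nu)=0$.

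The central step is the monomial expansion: for any $z \in S_1$ and any $t > 0$,
$$
z \;=\; \sum_{k=1}^{e}\sum_{i=1}^{f} g_{ik}\gamma_i x_1^{\alpha_k}y_1^{\beta_k} + w, \qquad g_{ik}\in R_1,\ w\in m_{S_1}^t.
$$
This follows by Nakayama applied to the finite $\hat R_1$-module $\hat S_1$: by construction the $ef$ elements $\gamma_i x_1^{\alpha_k}y_1^{\beta_k}$ reduce to a basis of $\hat S_1/m_{\hat R_1}\hat S_1 = S_1/(u_1,v_1)S_1$, and $\hat S_1$ is free of rank $ef$ over $\hat R_1$ by Proposition \ref{Prop11}. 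Since the values $\alpha_k\nu^*(x_1)+\beta_k\nu^*(y_1)$ lie in pairwise distinct cosets of $\Gamma_\nu$, no cancellation can occur between inner sums for different $k$, and the expansion combined with the first displayed formula applied inside each fixed $k$ yields
$$
\nu^*(z) \;=\; \min_{i,k}\bigl\{\nu(g_{ik}) + \alpha_k\nu^*(x_1) + \beta_k\nu^*(y_1)\bigr\}.
$$

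From here the isomorphism follows formally as in Theorem \ref{Theorem4}. The natural map
$$
\Phi\colon \bigl({\rm gr}_\nu(R_1)\otimes_{R_1/m_{R_1}} S_1/m_{S_1}\bigr)[X,Y] \longrightarrow {\rm gr}_{\nu^*}(S_1),\quad X\mapsto [x_1],\ Y\mapsto [y_1],
$$
kills $X^aY^b - [\gamma_1]^{-1}[u_1]$ and $X^cY^d - [\tau_1]^{-1}[v_1]$ because $u_1 = \gamma_1 x_1^a y_1^b$ and $v_1 = \tau_1 x_1^c y_1^d$; it is surjective by the expansion and has kernel generated by exactly these two relations, because the value formula forces every monomial $X^\alpha Y^\beta$ to be congruent modulo them to a unique $X^{\alpha_k}Y^{\beta_k}$ times a pure element of ${\rm gr}_\nu(R_1)$. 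The degree equality on quotient fields is then the general identity recalled in the introduction. The main obstacle is the joint selection of the $(\alpha_k,\beta_k)$: reconciling the coset-representative requirement for $\Gamma_{\nu^*}/\Gamma_\nu$ with the spanning requirement for $S_1/(u_1,v_1)S_1$ requires a small combinatorial analysis of which monomial ideals $(x_1^ay_1^b,x_1^cy_1^d)$ are $m_{S_1}$-primary; once this is settled, the remainder of the argument is a direct two-dimensional analogue of the rational-rank-$1$ proof.
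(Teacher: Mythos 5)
The proposal has a genuine gap at the expansion step, and it is not a detail you can defer.

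You invoke Proposition \ref{Prop11} to conclude that $\hat S_1$ is free of rank $ef$ over $\hat R_1$. But Proposition \ref{Prop11} does not say this: it constructs an intermediate normal local ring $R_0$ with $R_1\subset R_0\subset S_1$ over which $S_1$ \emph{does} lie, and the degree formula $[\mbox{QF}(\hat S_1):\mbox{QF}(\hat R_0)]=|ad-bc|\,[S_1/m_{S_1}:R_1/m_{R_1}]$ is for $\hat R_0$, not $\hat R_1$. Finiteness of $\hat S_1$ over $\hat R_1$ is equivalent to $m_{R_1}S_1=(x_1^ay_1^b,x_1^cy_1^d)S_1$ being $m_{S_1}$-primary, which forces (after swapping $u_1,v_1$) $b=c=0$; Theorem \ref{Theorem5} does not assert this and it cannot be arranged in general. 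For instance if $\Gamma_\nu$ is the index-two sublattice $\{m\nu^*(x_1)+n\nu^*(y_1): m\equiv n\ (\mathrm{mod}\ 2)\}$ of $\Gamma_{\nu^*}=\ZZ\nu^*(x_1)\oplus\ZZ\nu^*(y_1)$, then every basis of $\Gamma_\nu$ consisting of two elements of $\NN\nu^*(x_1)+\NN\nu^*(y_1)$ has both basis vectors with both coordinates nonzero or shares a common variable, so $(u_1,v_1)S_1\subset(x_1y_1)$ or $\subset(x_1)$ or $\subset(y_1)$; in every case $S_1/(u_1,v_1)S_1$ has positive dimension. Then there is no finite set of monomials $x_1^{\alpha_k}y_1^{\beta_k}$ spanning $S_1/(u_1,v_1)S_1$ over $S_1/m_{S_1}$, the Nakayama step never starts, and the claimed expansion $z=\sum g_{ik}\gamma_i x_1^{\alpha_k}y_1^{\beta_k}+w$ is false. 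The "small combinatorial analysis" you defer is precisely the point of failure; it is not a reconciliation problem but an impossibility. (A smaller, downstream issue: even when the coset representatives exist, reducing a monomial $X^\alpha Y^\beta$ modulo the two relations to $X^{\alpha_k}Y^{\beta_k}$ times an element of ${\rm gr}_\nu(R_1)$ uses translations by $\pm(a,b),\pm(c,d)$ and need not stay inside $\NN^2\times({\rm gr}_\nu(R_1))$, so the asserted normal form also requires justification.)

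A route that does not need $\hat S_1$ finite over $\hat R_1$: because $\nu^*$ has rational rank $2$, the strong monomialization of Theorem \ref{Theorem5} puts $\nu^*$ in a monomial form on $S_1$ with $\nu^*(x_1),\nu^*(y_1)$ $\QQ$-independent, so every $\gamma\in S^{S_1}(\nu^*)$ is uniquely $i\nu^*(x_1)+j\nu^*(y_1)$ with $i,j\ge0$ and $\mathcal P_\gamma(S_1)/\mathcal P^+_\gamma(S_1)$ is the free rank-one $S_1/m_{S_1}$-module on $\overline{x_1^iy_1^j}$. Hence ${\rm gr}_{\nu^*}(S_1)\cong(S_1/m_{S_1})[\overline x_1,\overline y_1]$ as a $\Gamma_{\nu^*}$-graded ring, and likewise ${\rm gr}_\nu(R_1)\cong(R_1/m_{R_1})[\overline u_1,\overline v_1]$. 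Tensoring the latter with $S_1/m_{S_1}$ and imposing $X^aY^b-[\gamma_1]^{-1}[u_1]$, $X^cY^d-[\tau_1]^{-1}[v_1]$ eliminates $\overline u_1,\overline v_1$ and returns $(S_1/m_{S_1})[X,Y]$, i.e.\ ${\rm gr}_{\nu^*}(S_1)$ under $X\mapsto\overline x_1$, $Y\mapsto\overline y_1$. The degree on fraction fields is $|ad-bc|\cdot[S_1/m_{S_1}:R_1/m_{R_1}]=e(\nu^*/\nu)f(\nu^*/\nu)$ by condition 3) of Theorem \ref{Theorem5}. This bypasses the finiteness issue entirely, which is the essential structural difference from the rational-rank-one argument of Theorem \ref{Theorem4} that you tried to transplant.
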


\section{Non constancy of $\alpha_n$ and $\beta_n$}\label{SectionCSM}

In Corollary 7.30 and Theorem 7.33 of \cite{CP}, it is shown that $\alpha_n+\beta_n$ is a constant for $n\gg 0$, where  $\alpha_n$ and $\beta_n$ are the integers defined in (\ref{eqI5}) which are associated to the stable forms (\ref{eq100}) of an extension of valued two dimensional algebraic function fields.  If $\Gamma_{\nu}$ is not $p$-divisible, it is 
further shown that $\alpha_n$ and $\beta_n$ are both constant for $n\gg 0$, and $p^{\beta_n}$ is the defect of the extension.
However, if $\Gamma_{\nu}$ is $p$-divisible, then it is only shown that the sum $\alpha_n+\beta_n$ is constant for $n\gg 0$, and that 
$p^{\alpha_n+\beta_n}$ is the defect of the extension. In Remark 7.34 \cite{CP} it is asked if $\alpha_n$ and $\beta_n$ (and some other numbers computed from  the stable forms) are eventually constant in the case when $\Gamma_{\nu}$ is $p$-divisible. We give examples (\ref{eq500}) - (\ref{eq503}) here showing that this is not the case, even within defect Artin Schreier extensions. 

The example in Theorem 7.38 \cite{CP} is a tower $K^*/K$ of two Artin Schreier extensions,
$$
K\rightarrow K_1\rightarrow K^*,
$$
where $K=k(u,v)$, $K_1=k(x,v)$ and $K^*=k(x,y)$, over an algebraically closed field $k$ of characteristic $p>0$, with
$$
u=\frac{x^p}{1-x^{p-1}}, v=y^p-x^cy
$$
where $p-1$ divides $c$. $R$ and $S$ are defined to be $R=k[u,v]_{(u,v)}$ and $S=k[x,y]_{(x,y)}$.
The valuation $\nu^*$ on $K^*$ is defined by the generating sequence (\ref{New1}) in $S$ and the valuation $\nu=\nu^*|K$ is defined by the generating sequence (\ref{New2}) in $R$. In Theorem 7.38 \cite{CP}, it is shown that $\Gamma_{\nu^*}=\Gamma_{\nu}=\frac{1}{p^{\infty}}\ZZ$, $\delta(\nu^*/\nu)=2$, and in the stable forms $R_n\rightarrow S_n$ above $R\rightarrow S$, we have $\alpha_n=1$ and $\beta_n=1$ for all $n$.

Let $\nu_1=\nu^*|K_1$.
We define $A=k[x,v]_{(x,v)}$, a local ring of $K_1$.  
We will show in this section that 
$\delta(\nu^*/\nu_1)=\delta(\nu_1/\nu)=1$ and we have  stable forms $R_{j}\rightarrow A_{j}$ of $K_1/K$ (Theorem \ref{Theorem72}) and $A_j\rightarrow S_j$ of $K^*/K_1$ (Theorem \ref{Theorem71}) such that
by the conclusions of Theorem \ref{Theorem71}, 
\begin{equation}\label{eq500}
\alpha_j(S_j/A_j)=\left\{\begin{array}{ll}
1&\mbox{ if $j$ is even}\\
0&\mbox{ if $j$ is odd}
\end{array}\right.
\end{equation}
and 
\begin{equation}\label{eq501}
\beta_j(S_j/A_j)=\left\{\begin{array}{ll}
0&\mbox{ if $j$ is even}\\
1&\mbox{ if $j$ is odd}
\end{array}\right.
\end{equation}
for $A_j\rightarrow S_j$, and 
by  Theorem \ref{Theorem72}, 
we have 
\begin{equation}\label{eq502}
\alpha_j(A_j/R_j)=\left\{\begin{array}{ll}
0&\mbox{ if $j$ is even}\\
1&\mbox{ if $j$ is odd}
\end{array}\right.
\end{equation}
and
\begin{equation}\label{eq503}
\beta_j(A_j/R_j)=\left\{\begin{array}{ll}
1&\mbox{ if $j$ is even}\\
0&\mbox{ if $j$ is odd}
\end{array}\right.
\end{equation}
for $R_j\rightarrow A_j$.
\vskip .2truein

We now prove these statements. We make use of the notation introduced in Section 7.11 \cite{CP} in the construction of the example of Theorem 7.38 \cite{CP}. The proof makes essential use of the theory of generating sequences of a valuation dominating a two dimensional regular local ring, as developed in \cite{Sp}  and  extended in \cite{CV1}. 

 We define a $k$-valuation $\nu_1$ of $K_1$ by prescribing a generating sequence  in $A$, starting with 
\begin{equation}\label{eq61}
U_0=x, U_1=v, U_2=v^p-x
\end{equation}
and for $j\ge 2$,
\begin{equation}\label{eq60}
\begin{array}{lll}
U_{j+1}&=& U_j^p-x^{p^{2j-2}}U_{j-1}\mbox{ if $j$ is odd}\\
U_{j+1}&=& U_j^{p^3}-x^{p^{2j-1}}U_{j-1}\mbox{ if $j$ is even.}
\end{array}
\end{equation}

We now establish that $\{U_i\}$ determines a unique valuation $\nu_1$ on $K_1$ such that $\nu_1(x)=1$. Define $\overline\gamma_j$ by
$$
\overline\gamma_0=1, \overline\gamma_1=\frac{1}{p}
$$
and for $j\ge 2$,
\begin{equation}\label{eq122}
\overline\gamma_j=\left\{\begin{array}{ll}
\frac{1}{p}(p^{2j-2}+\overline\gamma_{j-1})&\mbox{ if $j$ is odd}\\
\frac{1}{p^3}(p^{2j-1}+\overline \gamma_{j-1})&\mbox{ if $j$ is even}
\end{array}\right.
\end{equation}
By induction on $j\ge 0$, we have that
\begin{equation}\label{eq123}
\overline\gamma_{j+1}=\left\{\begin{array}{ll}
p^{2j-2}(\sum_{j'=0}^j \frac{1}{p^{4j'}})&\mbox{ if $j$ is odd}\\
p^{2j-1}(\sum_{j'=0}^j\frac{1}{p^{4j'}})&\mbox{ if $j$ is even}
\end{array}\right.
\end{equation}
Let
$$
n_i=\left\{\begin{array}{ll}
p&\mbox{ if $j$ is odd}\\
p^3&\mbox{ if $j$ is even}
\end{array}\right.
$$
Let $\Gamma_j$ be the group generated by $\overline\gamma_0,\ldots,\overline\gamma_j$. By Remark 7.171 \cite{CP} or Theorem 1.1 \cite{CV1} and its proof, $\{U_i\}$ is a generating sequence of a unique valuation $\nu_1$ on $K_1$ such that $\nu_1(U_i)=\overline\gamma_i$ for all $i$ if 
\begin{equation}\label{eq120}
n_i=[\Gamma_i:\Gamma_{i-1}]
\end{equation}
for $i\ge 1$ and
\begin{equation}\label{eq121}
\overline\gamma_{i+1}>n_i\overline\gamma_i
\end{equation}
for $i\ge 1$.

By (\ref{eq122}), $n_i\overline\gamma_i\in \Gamma_{i-1}$, so $[\Gamma_i:\Gamma_{i-1}]=n_i$ if $\overline\gamma_i$ has order precisely $n_i$ in $\Gamma_{i}/\Gamma_{i-1}$.

By (\ref{eq123}), we have that
$$
\Gamma_{i-1}=\left\{\begin{array}{ll}
\frac{1}{p^{2i-2}}\ZZ&\mbox{ if $i$ is odd}\\
\frac{1}{p^{2i-3}}\ZZ&\mbox{ if $i$ is even}
\end{array}\right.
$$
for $i\ge 1$, so that by (\ref{eq123}), $\gamma_i$ has order $n_i$ in $\Gamma_{i}/\Gamma_{i-1}$.
Since (\ref{eq121}) holds by (\ref{eq123}), we have that $\{U_i\}$ is a generating sequence in $A$ which determines a valuation $\nu_1$ of $K_1$.

The following Propositions \ref{Prop87} and \ref{Prop88} are  a little stronger than Proposition 7.40 \cite{CP}.

\begin{Proposition}\label{Prop87}
Let 
$$
S_1=S\rightarrow S_2\rightarrow\cdots
$$
be the sequence of quadratic transforms such that $S_i=S_{r_i'}$ in the notation of Definition 7.11 \cite{CP} so that $V_{\nu^*}=\cup S_i$.
Let $Q_i$ be the generating sequence of $S$ of (71) of \cite{CP} determining $\nu^*$ (Proposition 7.40 \cite{CP})
\begin{equation}\label{New1}
\begin{array}{lll}
Q_0&=&x\\
Q_1&=& y\\
Q_2&=& y^{p^2}-x\\
Q_{j+1}&=& Q_j^{p^2}=x^{p^{2j-2}}Q_{j-1}\mbox{ for }j\ge 2.
\end{array}
\end{equation}
Then there exist generating sequences
$\{Q(k)_i\}$ in $S_k$ such that 
$$
x_{S_2}=Q(2)_0=Q_1=y, y_{S_2}=Q(2)_1=\frac{Q_2}{Q_0}=\frac{Q_2}{x}
$$
are regular parameters in $S_2$ such that $Q(2)_0=0$ is a local equation of the exceptional locus of $\mbox{Spec}(S_2)\rightarrow \mbox{Spec}(S)$ and
for $k\ge 2$,
$$
x_{S_{k+1}}=Q(k+1)_0=Q(k)_1, y_{S_{k+1}}=Q(k+1)_1=\frac{Q_{k+1}}{x^{p^{2k-2}}Q_{k-1}}
$$
are regular parameters in $S_{k+1}$, such that $Q(k+1)_0=0$ is a local equation of the exceptional locus of $\mbox{Spec}(S_{k+1})\rightarrow \mbox{Spec}(S)$. 

In $S_2$,  the generating sequence $\{Q(2)_i\}$ is defined by 
\begin{equation}
Q(2)_0=Q_1, Q(2)_i=\frac{Q_{i+1}}{x^{p^{2(i-1)}}}
\end{equation}
for $i\ge 1$, and for $k\ge 3$, the generating sequence $\{Q(k)_j\}$ in $S_k$ is defined by 
\begin{equation}
Q(k)_0=Q(k-1)_1=\frac{Q_{k-1}}{x^{p^{2(k-3)}}Q_{k-3}}
\end{equation}
and
\begin{equation}
Q(k)_i=\frac{Q_{i+k-1}}{x^{p^{2(i+k-3)}}Q_{k-2}^{p^{2(i-1)}}}
\end{equation}
for $i\ge 1$.
\end{Proposition}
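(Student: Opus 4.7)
The plan is to argue by induction on $k\ge 2$, establishing simultaneously the identification of the regular parameters of $S_k$, the formulas for $Q(k)_i$, and the fact that $\{Q(k)_i\}$ is a generating sequence of $\nu^*$ in $S_k$ (using the criterion of Theorem 1.1 of \cite{CV1}, analogous to the one used above for $\{U_i\}$).

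For the base case $k=2$, observe that $Q_2=y^{p^2}-x$ and $\nu^*(Q_2)>p^2\nu^*(y)=\nu^*(y^{p^2})$ by the generating sequence inequality for $\{Q_j\}$, so $\nu^*(x)=p^2\nu^*(y)$ and the unique quadratic transform $S\to S_2$ along $\nu^*$ is the localization of $S[x/y]$ at the appropriate maximal ideal. A direct check shows that $y$ and $Q_2/x$ lie in $m_{S_2}$ and generate it (the latter since $(Q_2/x)=(y^{p^2}-x)/x\in m_{S_2}$), so they are regular parameters with $y=0$ defining the exceptional divisor. The formula $Q(2)_i=Q_{i+1}/x^{p^{2(i-1)}}$ for $i\ge 1$ then follows by reading the recursion $Q_{j+1}=Q_j^{p^2}-x^{p^{2j-2}}Q_{j-1}$ and factoring out the common power of $x$; the $\{Q(2)_i\}$ form a generating sequence by the same type of argument given above for $\{U_i\}$, using the values $\nu^*(Q_j)$ computed in \cite{CP}.

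For the inductive step, assume the claim for $S_k$. A short computation of $\nu^*(Q(k)_0)$ and $\nu^*(Q(k)_1)$ using the inductive formula and the known values of $\nu^*(Q_j)$ shows that $\nu^*(Q(k)_0)<\nu^*(Q(k)_1)$, so the quadratic transform $S_k\to S_{k+1}$ along $\nu^*$ takes $Q(k+1)_0:=Q(k)_1$ as the new exceptional parameter, and the second regular parameter is obtained as the residue of a quotient $Q_{k+1}/M$ for a suitable monomial $M$ in $x$ and the earlier $Q_j$. Substituting the recursion $Q_{k+1}=Q_k^{p^2}-x^{p^{2k-2}}Q_{k-1}$ into this quotient and simplifying using the inductive description of $Q(k)_0,Q(k)_1$ yields precisely the stated formula $Q(k+1)_1=Q_{k+1}/(x^{p^{2(k-1)}}Q_{k-1})$. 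The general formulas for $Q(k+1)_i$ with $i\ge 1$ are then obtained by the same manipulation applied to $Q_{i+k}$.

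The main obstacle is the exponent bookkeeping. One has to show by induction that repeatedly clearing out the factors introduced by each quadratic transform produces exactly the exponents $p^{2(i+k-3)}$ on $x$ and $p^{2(i-1)}$ on $Q_{k-2}$ claimed in the statement. This requires carefully tracking how the two-step recursion for $Q_{j+1}$ (with alternating exponents $p$ and $p^3$ appearing in the analogous recursion for $\{U_i\}$, but only $p^2$ here) interacts with the telescoping substitution, and then verifying, using the formulas (\ref{eq122})--(\ref{eq123})-type computations for $\nu^*(Q_j)$, that the resulting element has the correct $\nu^*$-value to serve as the next element of the generating sequence. Once this bookkeeping is in place, verification that $\{Q(k)_i\}$ is a generating sequence reduces to checking conditions (\ref{eq120}) and (\ref{eq121}) for the transformed data, which is a formal consequence of the corresponding properties of $\{Q_j\}$ in $S$.
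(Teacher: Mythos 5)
The paper does not actually supply a standalone proof of Proposition~\ref{Prop87}: it is stated as ``a little stronger than Proposition~7.40~\cite{CP}'' and the reader is expected to adapt the argument there (or, more usefully, to mimic the paper's fully written-out proof of the directly analogous Proposition~\ref{Prop70} for the sequence $\{U_i\}$). Against that benchmark, your overall strategy --- induction on $k$, tracking how the generating sequence transforms under the composite of quadratic transforms, and reading off the new regular parameters --- is exactly the right one and is the approach used for Proposition~\ref{Prop70}.

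That said, your proposal is a plan rather than a proof, and it leaves out precisely the content that needs to be established. The explicit exponents $p^{2(i+k-3)}$ on $x$ and $p^{2(i-1)}$ on $Q_{k-2}$ in the formula for $Q(k)_i$ \emph{are} the statement of the proposition; saying ``the main obstacle is the exponent bookkeeping'' and then assuming it works out is to assume the conclusion. In the paper's proof of Proposition~\ref{Prop70} this step is done in full: one applies Theorem~7.1 of \cite{CV1} (note: not Theorem~1.1, which is the criterion for a prescribed sequence to \emph{define} a valuation, but Theorem~7.1, which describes how a generating sequence transforms under a quadratic transform), producing an explicit auxiliary sequence $\tilde U(k+1)_j$, and then establishes a unit identity of the form (\ref{eq65}), (\ref{C1}), (\ref{C2}) relating powers of the new regular parameter to quotients of the old $U_j$, so that $U(k+1)_j$ differs from $\tilde U(k+1)_j$ by a unit $\equiv 1\bmod m_{A_{k+1}}$. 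You would need the $Q$-analogue of this unit computation (simpler here since the recursion always carries exponent $p^2$ rather than alternating $p$ and $p^3$), and you do not sketch it.

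Two smaller points: (i) $S\to S_2$ is not a single quadratic transform but the composite up to the key point $S_{r_2'}$ of Definition~7.11 of \cite{CP}, so ``the unique quadratic transform $S\to S_2$'' is imprecise and your claim that $Q_2/x\in m_{S_2}$ needs to be justified by actually running the quadratic transform algorithm down to $S_{r_2'}$; (ii) the values $\nu^*(Q_j)=\overline\beta_j$ you invoke to verify conditions (\ref{eq120})--(\ref{eq121}) are imported from \cite{CP} (cf.\ equation~(72) there, used in the proof of Proposition~\ref{Prop90}); it would be cleaner to cite them explicitly rather than gesture at (\ref{eq122})--(\ref{eq123}), which are the $\overline\gamma_j$ for the different sequence $\{U_j\}$.
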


It follows from Proposition \ref{Prop87} that $\nu^*(Q(k)_0)=p^2\nu^*(Q(k)_1)$, so we have regular parameters $\overline x_{S_{k+1}}, \overline y_{S_{k+1}}$ in $S_{k+1}$ defined by
\begin{equation}\label{eq89*}
Q(k)_0=\overline x_{S_{k+1}}^{p^2}(\overline y_{S_{k+1}}+1),
Q(k)_1=\overline x_{S_{k+1}}.
\end{equation}

\begin{Proposition}\label{Prop88}
Let 
$$
R_1=R\rightarrow R_2\rightarrow\cdots
$$
be the sequence of quadratic transforms such that $R_i=R_{r_i'}$ in the notation of Definition 7.11 \cite{CP} so that $V_{\nu}=\cup R_i$.
Let $P_i$ be the generating sequence of $R$ of (76) of \cite{CP} determining $\nu$ (Corollary 7.41 \cite{CP})
\begin{equation}\label{New2}
\begin{array}{lll}
P_0&=& u\\
P_1&=& v\\
P_2&=& v^{p^2}-u\\
P_{i+1}&=& P_i^{p^2}-u^{p^{2i-2}}P_{i-1}\mbox{ for }i\ge 2.
\end{array}
\end{equation}
Then there exist generating sequences
$\{P(k)_i\}$ in $R_k$ determining $\nu$ such that 
$$
u_{R_2}=P(2)_0=P_1=v, v_{R_2}=P(2)_1=\frac{P_2}{P_0}=\frac{P_2}{u}
$$
are regular parameters in $R_2$ such that $u_{R_2}=0$ is a local equation of the exceptional locus of $\mbox{Spec}(S_2)\rightarrow\mbox{Spec}(S)$ and
for $k\ge 2$,
$$
u_{R_{k+1}} = P(k+1)_0=P(k)_1, v_{R_{k+1}}=P(k+1)_1=\frac{P_{k+1}}{u^{p^{2k-2}}P_{k-1}}
$$
are regular parameters in $R_{k+1}$, such that $P(k+1)_0=0$ is a local equation of the exceptional locus of $\mbox{Spec}(R_{k+1})\rightarrow \mbox{Spec}(R)$. 

In $R_2$,  the generating sequence $\{P(2)_j\}$ is defined by 
\begin{equation}
P(2)_0=P_1, P(2)_i=\frac{P_{i+1}}{u^{p^{2(i-1)}}}
\end{equation}
for $i\ge 1$, and for $k\ge 3$, the generating sequence $\{P(k)_j\}$ in $R_k$ is defined by 
\begin{equation}
P(k)_0=P(k-1)_1=\frac{P_{k-1}}{u^{p^{2(k-3)}}P_{k-3}}
\end{equation}
and
\begin{equation}
P(k)_i=\frac{P_{i+k-1}}{u^{p^{2(i+k-3)}}P_{k-2}^{p^{2(i-1)}}}
\end{equation}
for $i\ge 1$.
\end{Proposition}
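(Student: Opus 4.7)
The plan is to prove Proposition \ref{Prop88} by directly paralleling the preceding Proposition \ref{Prop87}, exploiting the structural identity of the two statements. Under the formal substitution $u \leftrightarrow x$, $v \leftrightarrow y$, $P_i \leftrightarrow Q_i$, $R_k \leftrightarrow S_k$, the recurrence $P_{i+1} = P_i^{p^2} - u^{p^{2i-2}} P_{i-1}$ has the same shape as $Q_{j+1} = Q_j^{p^2} - x^{p^{2j-2}} Q_{j-1}$, and the initial triples $(u, v, v^{p^2}-u)$ and $(x, y, y^{p^2}-x)$ match. The proof thus proceeds by induction on $k$, transcribing each step of the argument used for $\{Q(k)_i\}$.

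For $k=1$, $\{P_i\}$ is a generating sequence of $\nu$ in $R$ by Corollary 7.41 \cite{CP}. For the inductive step from $R_k$ to $R_{k+1}$, the key input is the value identity $\nu(P(k)_0) = p^2\, \nu(P(k)_1)$, the analog for $\nu$ of the relation captured in (\ref{eq89*}); this follows inductively from the recurrence for $P_i$ combined with the closed-form expressions given in the proposition at step $k$. This identity forces the quadratic transform $R_k \to R_{k+1}$ along $\nu$ to be taken in the chart where $P(k)_1$ remains a regular parameter and the second new parameter equals a unit times $P(k)_0/(P(k)_1)^{p^2}$. Using the recurrence $P_{k+1} = P_k^{p^2} - u^{p^{2(k-1)}}P_{k-1}$, rewrite this element as $P_{k+1}/(u^{p^{2(k-1)}} P_{k-1})$, confirming the asserted formulas for $P(k+1)_0$ and $P(k+1)_1$. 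The closed form for $P(k+1)_i$ with $i \geq 1$ then follows by substituting the recurrence for $P_{i+k}$ into the claimed expression and simplifying via the closed form for $P(k+1)_0$.

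To verify that $\{P(k)_i\}$ is a generating sequence of $\nu$ in $R_k$, I would compute the values $\nu(P(k)_i)$ inductively from the recurrence and apply the criteria of Theorem 1.1 \cite{CV1} (equivalently Remark 7.171 \cite{CP}): the group-index condition $n_i = [\Gamma_i : \Gamma_{i-1}]$ and the strict-inequality condition $\nu(P(k)_{i+1}) > n_i \nu(P(k)_i)$, via a calculation parallel to (\ref{eq122})--(\ref{eq123}). That $P(k+1)_0 = 0$ defines the exceptional locus of $\mathrm{Spec}(R_{k+1}) \to \mathrm{Spec}(R)$ follows because $P(k+1)_0 = P(k)_1$ is the smaller-value parameter in $R_k$, so it vanishes on the exceptional divisor of the prior blowup $R_{k-1} \to R_k$ and is preserved under the further quadratic transform.

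The main obstacle is the exponent book-keeping at each inductive step: verifying that the closed form $P(k+1)_i = P_{i+k}/(u^{p^{2(i+k-2)}} P_{k-1}^{p^{2(i-1)}})$ is internally consistent with the recurrence $P_{j+1} = P_j^{p^2} - u^{p^{2j-2}} P_{j-1}$ after the composite sequence of blowups. This requires careful tracking of the strict transforms of $u = 0$ and each previous $P_j = 0$ in $R_k$, but the calculation is formally identical to the corresponding computation in the proof of Proposition \ref{Prop87} (ultimately Proposition 7.40 \cite{CP}), so no new ideas are needed beyond transcribing the argument with $P$ in place of $Q$.
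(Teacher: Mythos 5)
The paper gives no written proof of Proposition~\ref{Prop88} (nor of Proposition~\ref{Prop87}); it simply records both as ``a little stronger than Proposition 7.40 \cite{CP}.'' Your approach --- exploiting the exact structural identity of the recurrences (\ref{New2}) and (\ref{New1}) under $u\leftrightarrow x$, $v\leftrightarrow y$, $P\leftrightarrow Q$, and running the same $k$-induction via the value identity $\nu(P(k)_0)=p^2\nu(P(k)_1)$, the recurrence, and the generating-sequence criterion of Theorem 1.1 \cite{CV1} --- is exactly the argument the paper uses in detail for the harder, non-symmetric case in the proof of Proposition~\ref{Prop70}, and is what is implicitly intended here; so your proposal is correct and matches the paper's (unwritten) reasoning.
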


It follows from Proposition \ref{Prop88} that we have regular parameters $\overline u_{R_{k+1}}, \overline v_{R_{k+1}}$ in $R_{k+1}$ defined by
\begin{equation}\label{eq90}
P(k)_0=\overline u_{R_{k+1}}^{p^2}(\overline v_{R_{k+1}}+1),
P(k)_1=\overline u_{R_{k+1}}.
\end{equation}

\begin{Proposition}\label{Prop70} The sequence $\{U_j\}$ is a generating sequence in $A$ of a unique $k$-valuation $\nu_1$ of $K_1$ such that $\nu_1(x)=1$. Let
$$
A_1=A\rightarrow A_2\rightarrow \cdots
$$
be the sequence where $A_i=A_{a_i'}$ in the notation of Definition 7.11 \cite{CP}, so that $V_{\nu_1}=\cup A_i$.
Then there exist generating sequences $\{U(k)_i\}$ in $A_k$ determining $\nu_1$ such that
$$
x_{A_2}= U(2)_0=U_1=v, v_{A_2}=U(2)_1=\frac{U_2}{U_0}=\frac{U_2}{x}
$$
are regular parameters in $A_2$ such that $U(2)_0=0$ is a local equation of the exceptional locus of $\mbox{Spec}(A_2)\rightarrow \mbox{Spec}(A)$.
For $j\ge 2$,
\begin{equation}\label{eq78}
x_{A_{j+1}}=U(j+1)_0=U(j)_1, v_{A_{j+1}}=U(j+1)_1=
\left\{\begin{array}{ll}
\frac{U_{j+1}}{x^{p^{2j-2}}U_{j-1}}&\mbox{ if $j$ is odd}\\
\frac{U_{j+1}}{x^{p^{2j-1}}U_{j-1}}&\mbox{ if $j$ is even}
\end{array}\right.
\end{equation}
are regular parameters in $A_{j+1}$, such that $U(j+1)_0=0$ is a local equation of the exceptional locus of $\mbox{Spec}(A_{j+1})\rightarrow \mbox{Spec}(A)$. 

In $A_2$, the generating sequence $\{U(2)_j\}$ is defined by 
\begin{equation}\label{E1}
U(2)_0=U_1
\end{equation}
and for $j\ge 1$,
\begin{equation}\label{E2}
U(2)_j=\left\{\begin{array}{ll}
\frac{U_{j+1}}{x^{p^{2j-2}}}&\mbox{ if $j$ is odd}\\
\frac{U_{j+1}}{x^{p^{2j-1}}}&\mbox{ if $j$ is even}
\end{array}
\right.
\end{equation}

and for $k\ge 3$, the generating sequence $\{U(k)_j\}$ in $A_k$ is defined by 
\begin{equation}\label{B1}
U(k)_0=U(k-1)_1=
\left\{\begin{array}{ll}
\frac{U_{k-1}}{x^{p^{2k-6}}U_{k-3}}&\mbox{ if $k$ is odd}\\
\frac{U_{k-1}}{x^{p^{2k-5}}U_{k-3}}&\mbox{ if $k$ is even}
\end{array}\right.
\end{equation}

and for $k$ odd, $j\ge 1$,

\begin{equation}\label{B2}
U(k)_j=\left\{\begin{array}{ll}
\frac{U_{j+k-1}}{x^{p^{2(j+k)-5}}U_{k-2}^{p^{2j-2}}}&\mbox{ if $j$ is odd}\\
\frac{U_{j+k-1}}{x^{p^{2(j+k)-6}}U_{k-2}^{p^{2j-3}}}&\mbox{ if $j$ is even}\\
\end{array}
\right.
\end{equation}

and for $k$ even, $j\ge 1$,

\begin{equation}\label{B3}
U(k)_j=\left\{\begin{array}{ll}
\frac{U_{j+k-1}}{x^{p^{2(j+k)-6}}U_{k-2}^{p^{2j-2}}}&\mbox{ if $j$ is odd}\\
\frac{U_{j+k-1}}{x^{p^{2(j+k)-5}}U_{k-2}^{p^{2j-1}}}&\mbox{ if $j$ is even}\\
\end{array}
\right.
\end{equation}

Further, there exist units $\delta(k)_j$ in $A_k$ with $\delta(k)_j\equiv 1\mbox{ mod }m_{A_k}$ such that if $k$ is odd, then
\begin{equation}\label{A1}
U(k)_2=U(k)_1^p-\delta(k)_0U(k)_0
\end{equation}
and for $j\ge 2$,
\begin{equation}\label{A2}
U(k)_{j+1}=\left\{\begin{array}{ll}
U(k)_j^p-\delta(k)_{j+1}U(k)_0^{p^{2j-2}}U(k)_{j-1}&\mbox{ if $j$ is odd}\\
U(k)_j^{p^3}-\delta(k)_{j+1}U(k)_0^{p^{2j-1}}U(k)_{j-1}&\mbox{ if $j$ is even}
\end{array}\right.
\end{equation}

and if $k$ is even, then 

\begin{equation}\label{A3}
U(k)_2=U(k)_1^{p^3}-\delta(k)_0U(k)_0
\end{equation}
and for $j\ge 2$,
\begin{equation}\label{A4}
U(k)_{j+1}=\left\{\begin{array}{ll}
U(k)_j^{p^3}-\delta(k)_{j+1}U(k)_0^{p^{2j-2}}U(k)_{j-1}&\mbox{ if $j$ is odd}\\
U(k)_j^{p}-\delta(k)_{j+1}U(k)_0^{p^{2j-3}}U(k)_{j-1}&\mbox{ if $j$ is even}
\end{array}\right.
\end{equation}
\end{Proposition}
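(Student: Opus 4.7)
The plan is to mirror the inductive strategy of Propositions \ref{Prop87} and \ref{Prop88}. The paragraphs preceding the statement of Proposition \ref{Prop70} already establish, via Theorem 1.1 of \cite{CV1}, that $\{U_j\}$ forms a generating sequence in $A$ for a unique $k$-valuation $\nu_1$ of $K_1$ with $\nu_1(x)=1$ and $\nu_1(U_j)=\overline\gamma_j$, because conditions (\ref{eq120}) and (\ref{eq121}) have been verified from the explicit formula (\ref{eq123}) for $\overline\gamma_j$. What remains is to analyze the sequence $A_1\subset A_2\subset\cdots$ of quadratic transforms along $\nu_1$ and to exhibit the generating sequences $\{U(k)_j\}$ in each $A_k$.

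I would proceed by induction on $k\ge 2$. For the base case $k=2$, one checks from the values $\nu_1(x)=1$, $\nu_1(v)=1/p$ and the definition of the intermediate sequence of quadratic transforms underlying $A_2=A_{a_2'}$ (in the notation of Definition 7.11 \cite{CP}) that after a finite sequence of quadratic transforms one reaches a two-dimensional regular local ring with regular parameters $U(2)_0=v$ and $U(2)_1=U_2/x=(v^p-x)/x$, with $U(2)_0=0$ defining the exceptional divisor. The remaining $U(2)_j$ of (\ref{E2}) are obtained by rewriting the defining recursion (\ref{eq60}) for $U_{j+1}$ as a polynomial expression in $U(2)_0$ and $U(2)_1$, absorbing the factor $x^{p^{2j-2}}$ or $x^{p^{2j-1}}$ (according to the parity of $j$) into the denominator. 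Using Theorem 1.1 of \cite{CV1} once more, $\{U(2)_j\}$ is verified to be a generating sequence in $A_2$ defining $\nu_1$ because the values $\nu_1(U(2)_j)$ satisfy the analogs of (\ref{eq120}) and (\ref{eq121}) relative to $\nu_1(U(2)_0)=\nu_1(v)$.

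For the inductive step, assuming the statement for $A_k$, I would exhibit $A_{k+1}$ as the result of a further sequence of quadratic transforms along $\nu_1$ starting from $A_k$. The key point is that $\nu_1(U(k)_1)<\nu_1(U(k)_0)$, so after a suitable sequence of blowups $U(k)_1$ becomes a regular parameter, and $U(k)_0$ is then expressible as a unit times a power of $U(k)_1$ times the second regular parameter $U(k+1)_1$ given by (\ref{eq78}). Verifying that $U(k+1)_1$ has the claimed form amounts to substituting the inductive definitions (\ref{B1})--(\ref{B3}) into (\ref{eq60}) and separating the dominant monomial from the higher-order correction. The recursion relations (\ref{A1})--(\ref{A4}) then drop out by direct substitution: one rewrites $U(k)_{j+1}$ using the definition, expands $U_{j+k}$ via (\ref{eq60}), divides by the appropriate monomial in $x$ and $U_{k-2}$, and collects terms. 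The unit $\delta(k)_j$ arises as a ratio of these monomials, and the congruence $\delta(k)_j\equiv 1\pmod{m_{A_k}}$ follows from the fact that the correction factor lives in $1+(U(k)_0)$ since $U(k)_0=0$ is the exceptional locus.

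The main obstacle is the combinatorial bookkeeping of exponents: the recursion (\ref{eq60}) alternates between $p$ and $p^3$ according to the parity of $j$, and this alternation interacts with the parity of $k$ in (\ref{B2})--(\ref{B3}) and (\ref{A1})--(\ref{A4}) to produce the four cases at each step. The careful verification that the exponent of $x$ in the denominator of $U(k)_j$ matches exactly what is required for $U(k)_j$ to lie in $A_k$ with positive value, and for the recursion (\ref{A1})--(\ref{A4}) to close up under the transformation $A_k\to A_{k+1}$, is where most of the labor lies; once these parities are correctly aligned, the verification that $\{U(k)_j\}$ is a generating sequence of $\nu_1$ in $A_k$ follows uniformly from Theorem 1.1 \cite{CV1} applied at each step, using that the values $\nu_1(U(k)_j)$ continue to satisfy the group-generation and growth conditions inherited from the $\overline\gamma_j$.
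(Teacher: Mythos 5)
Your proposed strategy — induct on $k$, invoke the machinery of \cite{CV1} to transport a generating sequence across a quadratic transform, and then read off the recursion (\ref{A1})--(\ref{A4}) by direct substitution into (\ref{eq60}) — is correct in outline, and it is essentially the paper's strategy. However, there is a genuine gap in the middle of your argument, and also a citation error worth flagging.

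The citation error first: the tool you need for the inductive step is Theorem 7.1 of \cite{CV1}, not Theorem 1.1. Theorem 1.1 is an existence result (a prescribed recursion with the right value-theoretic data yields a valuation), which is what is used once, before Proposition \ref{Prop87}, to get $\nu_1$ off the ground. Theorem 7.1 is the result that actually tells you how a generating sequence in $A_k$ produces a canonical generating sequence $\{\tilde U(k+1)_j\}$ in the next ring $A_{k+1}$, and it is the engine of the induction.

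The substantive gap: Theorem 7.1 of \cite{CV1} delivers a generating sequence $\{\tilde U(k+1)_j\}$ whose terms are $U(k)_{j+1}$ divided by powers of $U(k)_1$ — see (\ref{eq71})--(\ref{eq74}) in the paper — whereas the proposition asserts a generating sequence $\{U(k+1)_j\}$ whose terms are $U_{j+k}$ divided by powers of $x$ and $U_{k-1}$. To pass from one to the other you must prove, at every stage $k$, an identity of the form
$$
U(k)_1^p=\epsilon(k+1)\frac{U_{k-1}}{U_{k-2}^p}\quad(k\text{ odd}),\qquad
U(k)_1^{p^3}=\epsilon(k+1)\frac{U_{k-1}}{U_{k-2}^{p^3}}\quad(k\text{ even}),
$$
with $\epsilon(k+1)\in A_{k+1}$ a unit $\equiv 1\bmod m_{A_{k+1}}$ (these are (\ref{C1}), (\ref{C2}), together with the base identity (\ref{eq65})). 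Your proposal describes the passage as ``dividing by the appropriate monomial in $x$ and $U_{k-2}$ and collecting terms,'' and attributes the congruence $\delta(k)_j\equiv 1$ to the bland observation that corrections live in $1+(U(k)_0)$. That is not where the congruence comes from: the units $\lambda(k+1)_j$ relating $U(k+1)_j$ to $\tilde U(k+1)_j$, and the units $\delta(k)_j$ in the closed recursion, are built out of $\epsilon(k+1)$, and the statement $\epsilon(k+1)\equiv 1\bmod m_{A_{k+1}}$ itself requires establishing that each $U_i$ for $i\le k$ is a power of $U(k+1)_0$ times a unit congruent to $1$, which in turn rests on the regular parameter presentations (\ref{eq77}), (\ref{eq78*}). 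Without proving the family (\ref{C1})--(\ref{C2}), the bookkeeping you describe as ``the main obstacle'' cannot even be set up: you have no way to trade powers of $U(k)_1$ in the denominator (which is what CV1's theorem gives you) for powers of $x$ and $U_{k-2}$ (which is what the proposition claims). Identifying and proving these auxiliary identities is the real content of the proof, and your proposal does not contain it.
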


\begin{proof} The fact that the sequence $\{U_j\}$ is a generating sequence of a unique $k$-valuation $\nu_1$ of $K_1$ was shown before Proposition \ref{Prop87}.

The remainder of the proposition is proved by induction on $k$. By (\ref{eq61}) and (\ref{eq60}) if $k=1$ and (\ref{A1}) - (\ref{A4}) if $k>1$
and by Theorem 7.1 \cite{CV1}, there exists a generating sequence $\{\tilde U(k+1)\}$ in $A_{k+1}$ defined if $k$ is odd by 
\begin{equation}\label{eq71}
\tilde U(k+1)_0=U(k)_1, \tilde U(k+1)_1=\frac{U(k)_2}{U(k)_1^p}
\end{equation}
and for $j\ge 2$,
\begin{equation}\label{eq72}
\tilde U(k+1)_j=\left\{\begin{array}{ll}
\frac{U(k)_{j+1}}{U(k)_1^{p^{2j-1}}}\mbox{ if $j$ is odd}\\
\frac{U(k)_{j+1}}{U(k)_1^{p^{2j}}}\mbox{ if $j$ is even}
\end{array}\right.
\end{equation}
and if $k$ is even, then

\begin{equation}\label{eq73}
\tilde U(k+1)_0=U(k)_1, \tilde U(k+1)_1=\frac{U(k)_2}{U(k)_1^{p^3}}
\end{equation}
and for $j\ge 2$,
\begin{equation}\label{eq74}
\tilde U(k+1)_j=\left\{\begin{array}{ll}
\frac{U(k)_{j+1}}{U(k)_1^{p^{2j+1}}}\mbox{ if $j$ is odd}\\
\frac{U(k)_{j+1}}{U(k)_1^{p^{2j}}}\mbox{ if $j$ is even}
\end{array}\right.
\end{equation}

To prove that the conclusions of the proposition hold for $\{U(k+1)_i\}$, we use the induction assumption and the appropriate equations (\ref{eq65}), (\ref{C1}) or (\ref{C2}) which are stated below to first verify that there are units $\lambda(k+1)_j$ in $A_{k+1}$
with $\lambda(k+1)_j\equiv 1\mbox{ mod }m_{A_{k+1}}$ such that 
\begin{equation}\label{eq75}
U(k+1)_j=\lambda(k+1)_j\tilde U(k+1)_j
\end{equation}
for all $j$, verifying that $\{U(k+1)_j\}$ is a generating sequence in $A_{k+1}$ for $\nu_1$, and   then 
that the appropriate equations (\ref{A1}) - (\ref{A4})  hold and finally that the appropriate equations (\ref{E1}) and (\ref{E2}),
or (\ref{B1}) - (\ref{B3}) hold.

We now state and prove the equations (\ref{eq65}), (\ref{C1}) and (\ref{C2}).

There exists a unit $\epsilon(2)$ in $A_2$ with $\epsilon(2)\equiv 1\mbox{ mod }m_{A_2}$ such that
\begin{equation}\label{eq65}
U_1^p=\epsilon(2) x.
\end{equation}

Equation (\ref{eq65}) follows from (\ref{eq61}) and the fact (Theorem 7.1 \cite{CV1}) that $A_2$ has regular parameters $\overline x_{A_2}$ and $\overline v_{A_2}$ defined by
\begin{equation}\label{eq76}
U(1)_0=x=\overline x_{A_2}^p(\overline v_{A_2}+1), U(1)_1=v=\overline x_{A_2}
\end{equation}

There exists a unit $\epsilon(k+1)\in A_{k+1}$ with $\epsilon(k+1)\equiv 1\mbox{ mod }m_{A_{k+1}}$ such that
\begin{equation}\label{C1}
U(k)_1^p=\epsilon(k+1)\frac{U_{k-1}}{U_{k-2}^p}
\end{equation}
if  $k\ge 2$ is odd and
\begin{equation}\label{C2}
U(k)_1^{p^3}=\epsilon(k+1)\frac{U_{k-1}}{U_{k-2}^{p^3}}
\end{equation}
if  $k\ge 2$ is even.

We now simultaneously verify the equations (\ref{C1}) and (\ref{C2}). 
 We first verify  that $A_{k+1}$ has regular parameters $\overline x_{A_{k+1}}$ and $\overline v_{A_{k+1}}$ defined if $k$ is odd by
\begin{equation}\label{eq77}
U(k)_0=\overline x_{A_{k+1}}^p(\overline v_{A_{k+1}}+1), U(k)_1=\overline x_{A_{k+1}}
\end{equation}
and if $k$ is even by 
\begin{equation}\label{eq78*}
U(k)_0=\overline x_{A_{k+1}}^{p^3}(\overline v_{A_{k+1}}+1), U(k)_1=\overline x_{A_{k+1}}.
\end{equation}
If $k$ is odd,  we have that $p\nu_1(U(k)_1)=\nu_1(U(k)_0)$ by (\ref{A1}) and if $k$ is even we have that  $p^3\nu_1(U(k)_1)=\nu_1(U(k)_0)$ by (\ref{A3}),  so (\ref{eq77}) and (\ref{eq78*}) follow from  (\ref{A1}), (\ref{A3}) and Theorem 7.1 \cite{CV1}. Thus for $j\le k$, $U_j$ is a power of $U(k+1)_0$ times a unit in $A_{k+1}$ which is equivalent to $1 \mbox{ mod } m_{A_{k+1}}$. Thus by (\ref{eq60}), there exists a unit $\epsilon(k+1)$ in $A_{k+1}$ with $\epsilon(k+1)\equiv 1 \mbox{ mod }m_{A_{k+1}}$ such that 
$$
U_k^p=\epsilon(k+1)x^{p^{2k-2}}U_{k-1}\mbox{ if $k$ is odd}
$$
 and 
$$
U_k^{p^3}=\epsilon(k+1)x^{p^{2k-1}}U_{k-1}\mbox{ if $k$ is even.}
$$
Thus if $k$ is odd,
$$
\begin{array}{lll}
U(k)_1^p&=& \left(\frac{U_k}{x^{p^{2k-3}}U_{k-2}}\right)^p=\frac{U_k^p}{x^{p^{2k-2}}U_{k-2}^p}\\
&=& \frac{\epsilon(k+1)x^{p^{2k-2}}U_{k-1}}{x^{p^{2k-2}}U_{k-2}^p}
=\epsilon(k+1)\frac{U_{k-1}}{U_{k-2}^p}.
\end{array}
$$ 
if $k$ is even,
$$
\begin{array}{lll}
U(k)_1^{p^3}&=& \left(\frac{U_k}{x^{p^{2k-4}}U_{k-2}}\right)^{p^3}=\frac{U_k^{p^3}}{x^{p^{2k-1}}U_{k-2}^{p^3}}\\
&=& \frac{\epsilon(k+1)x^{p^{2k-1}}U_{k-1}}{x^{p^{2k-1}}U_{k-2}^{p^3}}
=\epsilon(k+1)\frac{U_{k-1}}{U_{k-2}^{p^3}}.
\end{array}
$$ 
\end{proof}

\begin{Lemma}\label{Lemma64}
We have that
\begin{equation}\label{eq62'}
U_0=x=Q_0, U_1=Q_1^p-x^cy,
\end{equation}
and for $j\ge 1$, if $j$ is odd, then
\begin{equation}\label{eq62}
U_{j+1}=Q_{j+1}+x^{p^{2j-2}(\sum_{j'=0}^{\frac{j-1}{2}}\frac{1}{p^{4j'}})}f_{j+1}(x,y)
\end{equation}
where $f_{j+1}\in k[[x]][y]$, $x$ divides $f_{j+1}$ in $k[[x]][y]$, and $\mbox{deg}_yf_{j+1}=p^{2j-1}$.

If $j$ is even, then
\begin{equation}\label{eq63}
U_{j+1}=Q_{j+1}^p+x^{p^{2j-1}(\sum_{j'=0}^{\frac{j}{2}-1}\frac{1}{p^{4j'}})}f_{j+1}(x,y)
\end{equation}
where $f_{j+1}\in k[[x]][y]$, $x$ divides $f_{j+1}$ in $k[[x]][y]$, and $\mbox{deg}_yf_{j+1}=p^{2j}$.
\end{Lemma}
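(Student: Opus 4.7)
The proof is by strong induction on $j$. The case $j=0$ is immediate from $U_0=x=Q_0$. For $j=1$, direct computation using $v=y^p-x^cy$ gives
$$U_2 = v^p - x = y^{p^2} - x^{cp}y^p - x = Q_2 + x\cdot(-x^{cp-1}y^p),$$
so $f_2=-x^{cp-1}y^p$ has $\deg_y f_2 = p$ and $x\mid f_2$ (since $cp\ge 2$), which matches the asserted formula with $a_1=1$. For $j=2$, a similar direct manipulation using $U_3 = U_2^{p^3}-x^{p^3}U_1$ together with $Q_3^p = Q_2^{p^3}-x^{p^3}Q_1^p$ yields $f_3 = f_2^{p^3}+x^c y$, which plainly satisfies the required conditions with $b_2=p^3$.

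For the inductive step with $j\ge 3$, substitute the inductive formulas for $U_j$ and $U_{j-1}$ into the recursion (\ref{eq60}) and use that $(A+B)^{p^r}=A^{p^r}+B^{p^r}$ in characteristic $p$. If $j$ is odd, combining $U_j = Q_j^p + x^{b_{j-1}}f_j$ (from the even index $j-1$) and $U_{j-1}=Q_{j-1}+x^{a_{j-2}}f_{j-1}$ (odd index $j-2$) with the $Q$-recursion $Q_{j+1}=Q_j^{p^2}-x^{p^{2j-2}}Q_{j-1}$ gives
$$U_{j+1} = Q_{j+1} + x^{p\cdot b_{j-1}}f_j^p - x^{p^{2j-2}+a_{j-2}}f_{j-1}.$$
If $j$ is even, the analogous manipulation with $Q_{j+1}^p = Q_j^{p^3}-x^{p^{2j-1}}Q_{j-1}^p$ produces
$$U_{j+1} = Q_{j+1}^p + x^{p^3 a_{j-1}}f_j^{p^3} - x^{p^{2j-1}+b_{j-2}}f_{j-1}.$$

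The remainder is arithmetic of exponents. Writing the sums explicitly as $a_m = p^{2m-2}+p^{2m-6}+\cdots+1$ for odd $m$ and $b_m=p^{2m-1}+p^{2m-5}+\cdots+p^3$ for even $m$, one verifies directly: for odd $j\ge 3$, $p^{2j-2}+a_{j-2}=a_j$ and $p\cdot b_{j-1}=a_j-1$; for even $j\ge 4$, $p^3 a_{j-1}=b_j=p^{2j-1}+b_{j-2}$. In the even case both error exponents coincide at $b_j$, so one simply sets $f_{j+1}=f_j^{p^3}-f_{j-1}$, with $x\mid f_{j+1}$ inherited from the inductive hypothesis and $\deg_y f_{j+1}=p^3\cdot p^{2j-3}=p^{2j}$. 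In the odd case the exponents differ by exactly $1$: factor out $x^{a_j-1}$ to obtain $x^{a_j-1}(f_j^p-xf_{j-1})$. Since $x\mid f_j$ by induction and $p\ge 2$, we have $x^p\mid f_j^p$, hence $x^2\mid f_j^p$; combined with $x^2\mid xf_{j-1}$, this gives $x^2\mid(f_j^p-xf_{j-1})$, so $f_{j+1}=(f_j^p-xf_{j-1})/x$ is divisible by $x$, and $\deg_y f_{j+1}=p\cdot p^{2j-2}=p^{2j-1}$.

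The main obstacle will be the combinatorial bookkeeping of the exponents. In particular, the asymmetry in the odd case — where the two error exponents are off by exactly $1$ — crucially relies on $p\ge 2$ to promote $x\mid f_j$ into $x^2\mid f_j^p$ so that the bracketed expression absorbs the missing factor of $x$; and the identities for $a_j$ and $b_j$ must be in precisely the stated geometric-series form for the $Q$-recursion and the $U$-recursion cancellations to match up exactly. Once these identities are verified, the $x$-divisibility and $y$-degree bounds on $f_{j+1}$ fall out routinely.
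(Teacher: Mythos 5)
Your proof is correct and follows essentially the same approach as the paper's: verify the base cases $j=1,2$ directly, then induct by substituting the hypotheses into the $U$-recursion, using the Frobenius identity $(A+B)^{p^r}=A^{p^r}+B^{p^r}$ and the matching $Q$-recursion, and reduce to the two arithmetic identities on the geometric-series exponents. Your explicit identities $p^{2j-2}+a_{j-2}=a_j$, $p\cdot b_{j-1}=a_j-1$ (odd case) and $p^3 a_{j-1}=b_j=p^{2j-1}+b_{j-2}$ (even case) are exactly the ones the paper records; the only thing you add beyond the paper is the worthwhile explicit check that $f_{j+1}=(f_j^p - x f_{j-1})/x$ is still divisible by $x$ in the odd case, which the paper leaves implicit — you correctly observe that $x\mid f_j$ plus $p\ge 2$ gives $x^2 \mid f_j^p$, so the division by $x$ loses nothing.
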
 

\begin{proof} We have that
$$
U_1=v=y^p-x^cy=Q_1^p-x^cy,
$$
$$
U_2=v^p-x=Q_2-x^{cp}y^p,
$$
verifying (\ref{eq62}) for $j=1$ and
$$
U_3=U_2^{p^3}-x^{p^3}U_1=Q_3^p-x^{cp^4}y^p+x^{p^3+c}y
$$
verifying (\ref{eq63}) for $j=2$.

We prove the equations (\ref{eq62}) and (\ref{eq63})  for $j\ge 3$ by induction. First assume that $j$ is odd. We have that
$$
\begin{array}{lll}
U_{j+1}&=& U_j^p-x^{p^{2j-2}}U_{j-1}\\
&=& Q_{j+1}+x^{p^{2j-2}(\sum_{j'=0}^{\frac{j-3}{2}}\frac{1}{p^{4j'}})}f_j^p
-x^{p^{2j-2}+p^{2j-6}(\sum_{j'=0}^{\frac{j-3}{2}}\frac{1}{p^{4j'}})}f_{j-1}.
\end{array}
$$
The formula (\ref{eq62}) then follows since
$$
p^{2j-2}(\sum_{j'=0}^{\frac{j-1}{2}}\frac{1}{p^{4j'}})=p^{2j-2}(\sum_{j'=0}^{\frac{j-3}{2}}\frac{1}{p^{4j'}})+1
$$
and
$$
p^{2j-2}+p^{2j-6}(\sum_{j'=0}^{\frac{j-3}{2}}\frac{1}{p^{4j'}})=p^{2j-2}(\sum_{j'=0}^{\frac{j-1}{2}}\frac{1}{p^{4j'}}).
$$

Now assume that $j\ge 3$ is even. We have that
$$
\begin{array}{lll}
U_{j+1}&=& U_j^{p^3}-x^{p^{2j-1}}U_{j-1}\\
&=& Q_{j+1}^p+x^{p^{2j-1}(\sum_{j'=0}^{\frac{j}{2}-1}\frac{1}{p^{4j'}})}f_j^{p^3}
-x^{p^{2j-1}+p^{2j-5}(\sum_{j'=0}^{\frac{j}{2}-2}\frac{1}{p^{4j'}})}f_{j-1}.
\end{array}
$$
The formula (\ref{eq63}) then follows since
$$
p^{2j-1}+p^{2j-5}(\sum_{j'=0}^{\frac{j}{2}-2}\frac{1}{p^{4j'}})=p^{2j-1}(\sum_{j'=0}^{\frac{j}{2}-1}\frac{1}{p^{4j'}}).
$$
\end{proof}

\begin{Proposition}\label{Prop90}
We have $V_{\nu_1}=V_{\nu^*}\cap K_1$ and $\nu^*|K_1=\nu_1$. Further, $\nu^*$ is the unique extension of $\nu_1$ to $K^*$.
\end{Proposition}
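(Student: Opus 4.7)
The plan is to establish $\nu^*|_{K_1}=\nu_1$ by verifying that $\nu^*$ assigns the prescribed values $\overline\gamma_j$ to the generating sequence $\{U_j\}$ of $\nu_1$, and then to deduce the uniqueness of $\nu^*$ as an extension of $\nu_1$ from the multiplicativity of the defect in the tower $K\subseteq K_1\subseteq K^*$, using the value $\delta(\nu^*/\nu)=2$ from Theorem 7.38 \cite{CP}.

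For the first step, I would first compute $\nu^*(Q_j)$ directly from the recursion (\ref{New1}), obtaining the closed form $\nu^*(Q_j)=p^{2j-4}\sum_{k=0}^{j-1}p^{-4k}$ for $j\ge 2$ by an easy induction. Comparison with (\ref{eq123}) then identifies $\overline\gamma_{j+1}=\nu^*(Q_{j+1})$ when $j$ is odd and $\overline\gamma_{j+1}=p\,\nu^*(Q_{j+1})$ when $j$ is even. For each $j\ge 1$, Lemma \ref{Lemma64} writes $U_{j+1}$ as either $Q_{j+1}$ or $Q_{j+1}^p$ plus an error term of the form $x^a f_{j+1}$ with $x\mid f_{j+1}$, so $\nu^*(x^a f_{j+1})\ge a+1$. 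The essential calculation is that this lower bound strictly exceeds the $\nu^*$-value of the main term; after cancellation, the required inequality reduces to a geometric-series estimate of the form $p^{2j-2}\sum_{j'=(j+1)/2}^{j}p^{-4j'}<1$, bounded above by $1/(p^4-1)$, and its even-$j$ analogue bounded above by $p^3/(p^4-1)$. Hence $\nu^*(U_j)=\overline\gamma_j=\nu_1(U_j)$ for all $j$, and since $\{U_j\}$ is a generating sequence for $\nu_1$ on $A$ while $\nu^*|_{K_1}$ is a $k$-valuation of $K_1$ dominating $A$, the uniqueness statement in Theorem 1.1 \cite{CV1} identifies the two valuations, so $V_{\nu_1}=V_{\nu^*}\cap K_1$.

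For the uniqueness of the extension, I would observe first that $K^*/K_1$ is Galois of degree $p$ (the conjugates of $y$ over $K_1$ are $y+\zeta x^{c/(p-1)}$ for $\zeta\in\mu_{p-1}\subseteq k$, which all lie in $K^*$ since $p-1\mid c$), and that $K_1/K$ is Galois of degree $p$ (the substitution $X=1/x$, $U=1/u$ recasts the defining relation $u=x^p/(1-x^{p-1})$ as the Artin--Schreier equation $X^p-X=U$). Since $\Gamma_{\nu^*}=\Gamma_{\nu}=\frac{1}{p^\infty}\ZZ$ by Theorem 7.38 \cite{CP}, the intermediate group $\Gamma_{\nu_1}$ coincides with both, giving $e(\nu^*/\nu_1)=e(\nu_1/\nu)=1$; likewise all residue fields equal $k$, so $f(\nu^*/\nu_1)=f(\nu_1/\nu)=1$. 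For Galois extensions of prime degree $p$, the identity $|G^s|=efp^\delta$ then forces $\delta(\nu^*/\nu_1),\delta(\nu_1/\nu)\in\{0,1\}$, and additivity of the defect in the tower yields $2=\delta(\nu^*/\nu)=\delta(\nu^*/\nu_1)+\delta(\nu_1/\nu)$, forcing each to equal $1$. With $\delta(\nu^*/\nu_1)=1$ and $e=f=1$, the decomposition group satisfies $|G^s(\nu^*/\nu_1)|=p=|\mathrm{Gal}(K^*/K_1)|$, so the number of extensions of $\nu_1$ to $K^*$ equals $[G:G^s]=1$, exhibiting $\nu^*$ as the unique extension.

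I expect the main obstacle to be the first step: the exponents appearing in Lemma \ref{Lemma64} and in the explicit formula (\ref{eq123}) are indexed by slightly different truncated sums, and the parity-dependent bookkeeping required to confirm that the error terms are strictly dominated by the main terms demands careful attention, even though each individual estimate is routine.
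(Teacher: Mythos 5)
Your proposal is correct, and the first half follows essentially the same route as the paper: compare $\nu^*(U_j)$ with the prescribed $\overline\gamma_j$ using Lemma \ref{Lemma64} and verify that the error terms $x^a f_{j+1}$ have strictly larger $\nu^*$-value. The only cosmetic difference is that you derive the closed form for $\nu^*(Q_j)$ directly from the recursion (\ref{New1}), while the paper cites equation (72) of \cite{CP} for the same numbers; your geometric-series estimates for the error terms check out (the parity is the main bookkeeping hazard, as you anticipate). One small slip: the nontrivial $\mathrm{Gal}(K^*/K_1)$-conjugates of $y$ are $y+ix^{c/(p-1)}$ for $i\in\FF_p^{\times}$ (Artin--Schreier translation by the prime field), not by $(p-1)$-th roots of unity; this does not affect the argument since all that is used is that $K^*/K_1$ is Galois of degree $p$.

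Where you diverge materially is the uniqueness claim. The paper's argument is one line: by part (1) of Theorem 7.38 \cite{CP}, $\nu^*$ is the \emph{unique} extension of $\nu$ to $K^*$; any extension $\nu'$ of $\nu_1$ to $K^*$ restricts to $\nu$ on $K$, hence equals $\nu^*$. Your route instead establishes $\delta(\nu_1/\nu)=\delta(\nu^*/\nu_1)=1$ from $\delta(\nu^*/\nu)=2$, additivity of the defect, and the constraint $\delta\in\{0,1\}$ for a Galois extension of prime degree $p$ with $e=f=1$, and then reads off uniqueness from $|G^s(\nu^*/\nu_1)|=p$. This is correct but considerably heavier machinery than needed, and it implicitly re-derives a fact ($\delta(\nu^*/\nu_1)=\delta(\nu_1/\nu)=1$) that the paper announces as a conclusion of the whole section rather than an input to this proposition. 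On the other hand, your argument has the modest virtue of exhibiting those intermediate defects explicitly, which is of independent interest in this section. You might keep both: cite Theorem 7.38 for the quick uniqueness proof, and note separately that additivity of $\delta$ with the degree-$p$ constraint pins down the intermediate defects.
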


\begin{proof} Since $\nu_1(x)=\nu^*(x)=1$, it suffices to  show that $V_{\nu_1}\subset V_{\nu^*}$. From (\ref{eq78}) and the fact that
$V_{\nu_1}=\cup A_j$, we need only show that $v_{A_{j+1}}\in V_{\nu^*}$ for all $j$. From equation (72) of \cite{CP}, we obtain
that
\begin{equation}\label{eq101}
\nu^*(Q_{j+1})=\overline \beta_{j+1}<p^{2j-2}(\sum_{j'=0}^{\frac{j-1}{2}}\frac{1}{p^{4j'}})+1
\end{equation}
if $j$ is odd, and
\begin{equation}\label{eq102}
\nu^*(Q_{j+1}^p)=p\overline \beta_{j+1}<p^{2j-1}(\sum_{j'=0}^{\frac{j}{2}-1}\frac{1}{p^{4j'}})+1
\end{equation}
if $j$ is even. Thus by Lemma \ref{Lemma64},
\begin{equation}\label{eq91*}
\nu^*(U_{j+1})=\left\{\begin{array}{ll}
\nu^*(Q_{j+1})&\mbox{ if $j$ is odd}\\
p\nu^*(Q_{j+1})&\mbox{ if $j$ is even}
\end{array}\right.
\end{equation}
Thus by (\ref{eq78}) and Proposition 7.40 \cite{CP},
$$
\nu^*(v_{A_{j+1}})=\left\{\begin{array}{ll}
\nu^*(y_{S_{j+1}})&\mbox{ if $j$ odd}\\
p\nu^*(y_{S_{j+1}})&\mbox{ if $j$ even}
\end{array}\right.
$$
The fact that $\nu^*$ is the unique extension of $\nu$ to $K^*$ follows since $\nu_1|K=\nu^*|K=\nu$ by Proposition 7.40 \cite{CP}, and $\nu^*$ is the unique extension of $\nu$ to $K^*$ by (1) of Theorem 7.38 \cite{CP}.
\end{proof}

\begin{Theorem}\label{Theorem71} For all $j$, $S_j$ is a finite extension of $A_j$. If $j$ is odd, we have expressions
\begin{equation}\label{eq80}
x_{A_{j+1}}=\tau_{j+1}x_{S_{j+1}}^p, v_{A_{j+1}}=\gamma_{j+1}y_{s_{j+1}}+x_{S_{j+1}}\Omega_{j+1}
\end{equation}
where $\tau_{j+1},\gamma_{j+1}$ are units in $S_{j+1}$ and $\Omega_{j+1}\in S_{j+1}$.

If $j$ is even, we have expressions
\begin{equation}\label{eq81}
x_{A_{j+1}}=\tau_{j+1}x_{S_{j+1}}, v_{A_{j+1}}=\gamma_{j+1}y_{S_{j+1}}^p+x_{S_{j+1}}\Omega_{j+1}
\end{equation}
where $\tau_{j+1},\gamma_{j+1}$ are units in $S_{j+1}$ and $\Omega_{j+1}\in S_{j+1}$.
\end{Theorem}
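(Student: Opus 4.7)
The plan is to proceed by induction on $j$, splitting by parity.

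For the base case $j=1$, I start from $x_{A_2}=U_1=v=y^p-x^c y$ and use the regular parameters $x_{S_2}=y=Q_1$ and $y_{S_2}=Q_2/x$ of $S_2$ from Proposition \ref{Prop87}, so that $x=y^{p^2}/(y_{S_2}+1)$ inside $S_2$. Substituting gives $v=y^p\bigl(1-y^{p^2c-p+1}(y_{S_2}+1)^{-c}\bigr)=\tau_2 x_{S_2}^p$ with $\tau_2$ a unit (since $p^2c-p+1\ge 1$), which is (\ref{eq80}) for $x_{A_2}$. For $v_{A_2}=(v^p-x)/x$, the identity $v^p/x=\tau_2^p(y_{S_2}+1)$ yields $v_{A_2}=\tau_2^p y_{S_2}+(\tau_2^p-1)$; using $\tau_2^p-1=(\tau_2-1)^p$ in characteristic $p$ (whose $y$-exponent is $p(p^2c-p+1)\ge 1$) shows that $\tau_2^p-1\in x_{S_2}S_2$. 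This gives (\ref{eq80}) at $j=1$ and yields $A_2\subset S_2$.

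For the inductive step at $j\ge 2$, I would substitute the Lemma \ref{Lemma64} expansions $U_k=Q_k^{e(k)}+x^{\alpha_k}f_k$ (with $e(k)=1$ for $k$ even and $e(k)=p$ for $k$ odd) into the formulas of Proposition \ref{Prop70} that express $x_{A_{j+1}}$ and $v_{A_{j+1}}$ as quotients of $U$'s by $x$-powers, and compare with the parallel expressions from Proposition \ref{Prop87} for $x_{S_{j+1}}$ and $y_{S_{j+1}}$ in terms of $Q$'s. The valuation estimates (\ref{eq101})--(\ref{eq102}) combined with $x\mid f_k$ force $\nu^*(x^{\alpha_k}f_k)>\nu^*(Q_k^{e(k)})$, so each quotient $U_k/Q_k^{e(k)}$ is a unit in $V_{\nu^*}$ congruent to $1$ modulo $m_{V_{\nu^*}}$. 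The $x$-exponents in the Proposition \ref{Prop70} formulas are precisely calibrated against those in Proposition \ref{Prop87} and the leading exponents $p^{2j-2}\sum 1/p^{4j'}$, $p^{2j-1}\sum 1/p^{4j'}$ of Lemma \ref{Lemma64}, so that after cancellation one obtains
\[
x_{A_{j+1}}=\tau_{j+1}\,x_{S_{j+1}}^{e(j)}, \qquad v_{A_{j+1}}=\gamma_{j+1}\,y_{S_{j+1}}^{e(j+1)}+R,
\]
which yields exponents $p$ and $1$ for $j$ odd and $1$ and $p$ for $j$ even, matching (\ref{eq80})--(\ref{eq81}). A $\nu^*$-comparison shows $\nu^*(R)\ge \nu^*(y_{S_{j+1}}^{e(j+1)})+\nu^*(x_{S_{j+1}})$, forcing $R\in x_{S_{j+1}}S_{j+1}$.

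Finiteness of $S_j$ over $A_j$ follows once (\ref{eq80})/(\ref{eq81}) is established: $x_{S_{j+1}}$ (respectively $y_{S_{j+1}}$) satisfies a degree-$p$ equation over $A_{j+1}$ coming from the stable form, matching $[K^*:K_1]=p$. The principal obstacle will be the bookkeeping showing that the unit factors $U_k/Q_k^{e(k)}$ and the remainder $R$ actually live in $S_{j+1}$ (not merely in $V_{\nu^*}$), and that $R$ is divisible by $x_{S_{j+1}}$ there. This uses (\ref{eq89*}) to rewrite each $Q_k$ appearing in a denominator as a monomial in $x_{S_{j+1}},y_{S_{j+1}}$ times a unit, together with the exponent alignment between Lemma \ref{Lemma64} and Propositions \ref{Prop70}, \ref{Prop87} that is built into the recursion (\ref{eq60}) defining the $U_k$.
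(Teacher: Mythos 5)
Your core strategy matches the paper's exactly: substitute the Lemma~\ref{Lemma64} expansions of the $U_i$ into the Proposition~\ref{Prop70} quotient formulas for $x_{A_{j+1}}$ and $v_{A_{j+1}}$, compare against the parallel Proposition~\ref{Prop87} formulas for $x_{S_{j+1}}$ and $y_{S_{j+1}}$, and use the valuation bounds (\ref{eq101})--(\ref{eq102}) together with $x\mid f_k$ to recognize $U_i/Q_i^{e(i)}$ as a unit. The paper presents this as a direct calculation rather than an induction, but the content is the same, and the ``principal obstacle'' you flag is handled exactly the way you suspect: since for $i\le j$ each $Q_i$ is a unit of $S_{j+1}$ times a power of $x_{S_{j+1}}$, the formulas (\ref{eq62}), (\ref{eq63}) combined with (\ref{eq101}), (\ref{eq102}) show that each $U_i$ with $i\le j$ is a unit of $S_{j+1}$ times $Q_i^p$ (resp.\ $Q_i$) for $i$ odd (resp.\ even), so the unit factors and the remainder $x_{S_{j+1}}\Omega_{j+1}$ live in $S_{j+1}$ and not merely in $V_{\nu^*}$. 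Your explicit base case at $j=1$ is correct.

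Your finiteness argument is a genuine gap. The relation $x_{A_{j+1}}=\tau_{j+1}x_{S_{j+1}}^p$ is \emph{not} an integral equation for $x_{S_{j+1}}$ over $A_{j+1}$, because the unit $\tau_{j+1}$ is only known to lie in $S_{j+1}$, not in $A_{j+1}$; rewriting it as $x_{S_{j+1}}^p-\tau_{j+1}^{-1}x_{A_{j+1}}=0$ has a coefficient outside $A_{j+1}$. Moreover, even granting such an equation, it would at best show that $A_{j+1}[x_{S_{j+1}}]$ is module-finite over $A_{j+1}$, not that all of $S_{j+1}$ is. The paper instead observes that (\ref{eq80})/(\ref{eq81}) make $m_{A_j}S_j$ primary to $m_{S_j}$, so $A_j\to S_j$ is quasi-finite, and then invokes the uniqueness of the extension $\nu^*$ of $\nu_1$ to $K^*$ established in Proposition~\ref{Prop90}; since $S_j$ is then the unique local ring of $K^*$ lying over $A_j$, Zariski's Main Theorem for the excellent normal ring $A_j$ identifies $S_j$ with the integral closure of $A_j$ in $K^*$, which is finite.
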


\begin{proof} 
For $i\le j$, $Q_{i}$ are units in $S_{j+1}$ times a power of $x_{S_{j+1}}$. By (\ref{eq62}), (\ref{eq63}) and (\ref{eq101}) and (\ref{eq102}), we then have that
$U_{i}$ are units in $S_{j+1}$ times a power of $x_{S_{j+1}}$ and for $i\le j$,
\begin{equation}\label{eq92*}
U_{i}=\left\{\begin{array}{ll}
\alpha Q_{i}^p&\mbox{ where $\alpha\in S_{j+1}$ is a unit if $i$ is odd}\\
\alpha  Q_{i}&\mbox{ where $\alpha\in S_{j+1}$ is a unit if $i$ is even.}
\end{array}\right.
\end{equation}
We have from (\ref{eq101}) that
\begin{equation}\label{eq93*}
\nu^*(Q_{j-1})=\overline\beta_{j-1}=p^{2j-6}(\sum_{j'=0}^{j-2}\frac{1}{p^{4j'}})<
p^{2j-6}(\sum_{j'=0}^{\frac{j-1}{2}-1}\frac{1}{p^{4j'}})+1=
p^{2j-2}(\sum_{j'=0}^{\frac{j-1}{2}}\frac{1}{p^{4j'}})+1-p^{2j-2}
\end{equation}
if $j$ is odd and by (\ref{eq102}),
\begin{equation}\label{eq94*}
p\nu^*(Q_{j-1})=p\overline\beta_{j-1}=p^{2j-5}(\sum_{j'=0}^{j-2}\frac{1}{p^{4j'}})<
p^{2j-5}(\sum_{j'=0}^{\frac{j}{2}-2}\frac{1}{p^{4j'}})+1=
p^{2j-1}(\sum_{j'=0}^{\frac{j}{2}-1}\frac{1}{p^{4j'}})+1-p^{2j-1}
\end{equation}
if $j$ is even.

If $j$ is odd, by equations (\ref{eq78}), (\ref{eq62}), (\ref{eq92*}) and (\ref{eq93*}) and Proposition \ref{Prop87}, we have an expression
$$
v_{A_{j+1}}=\frac{U_{j+1}}{x^{p^{2j-2}}U_{j-1}}=\frac{Q_{j+1}+x^{p^{2j-2}(\sum_{j'=0}^{\frac{j-1}{2}}\frac{1}{p^{4j'}})}f_{j+1}}
{\alpha x^{p^{2j-2}}Q_{j-1}}=\gamma y_{S_{j+1}}+x_{S_{j+1}}\Omega
$$
for some unit $\gamma\in S_{j+1}$ and $\Omega\in S_{j+1}$.

If $j$ is even, by equations (\ref{eq78}), (\ref{eq63}), (\ref{eq92*}) and (\ref{eq94*}) and Proposition \ref{Prop87}, we have an expression
$$
v_{A_{j+1}}=\frac{U_{j+1}}{x^{p^{2j-1}}U_{j-1}}=\frac{Q_{j+1}^p+x^{p^{2j-1}(\sum_{j'=0}^{\frac{j}{2}-1}\frac{1}{p^{4j'}})}f_{j+1}}
{\alpha x^{p^{2j-1}}Q_{j-1}^p}=\gamma y_{S_{j+1}}^p+x_{S_{j+1}}\Omega
$$
for some unit $\gamma\in S_{j+1}$ and $\Omega\in S_{j+1}$.

By (\ref{eq92*}), we have that
$$
x_{A_{j+1}}=\frac{U_j}{x^{p^{2j-3}}U_{j-2}}=\frac{\alpha Q_j^p}{x^{p^{2j-3}}Q_{j-2}^p}=\alpha x_{S_{j+1}}^p
$$
for some unit $\alpha\in S_{j+1}$ if $j$ is odd and
$$
x_{A_{j+1}}=\frac{U_j}{x^{p^{2j-4}}U_{j-2}}=\frac{\alpha Q_j}{x^{p^{2j-4}}Q_{j-2}}=\alpha x_{S_{j+1}}
$$
for some unit $\alpha\in S_{j+1}$ if $j$ is even.

The extension $A_j\rightarrow S_j$ is finite for all $j$ since each $A_j\rightarrow S_j$ is quasi finite and $\nu^*$ is the unique extension of
$\nu$ to $K^*$.

\end{proof}

\begin{Theorem}\label{Theorem72} For all $j$, $A_j$ is a finite extension of $R_j$. If $j$ is odd, we have expressions
\begin{equation}\label{eq83}
u_{R_{j+1}}=\sigma_{j+1}x_{A_{j+1}}, v_{R_{j+1}}=\lambda_{j+1}v_{A_{j+1}}^p+x_{A_{j+1}}\Lambda_{j+1}
\end{equation}
where $\sigma_{j+1},\lambda_{j+1}$ are units in $A_{j+1}$ and $\Lambda_{j+1}\in A_{j+1}$.

If $j$ is even, we have expressions
\begin{equation}\label{eq84}
u_{R_{j+1}}=\sigma_{j+1}x_{A_{j+1}}^p, v_{R_{j+1}}=\lambda_{j+1}v_{A_{j+1}}+x_{A_{j+1}}\Lambda_{j+1}
\end{equation}
where $\sigma_{j+1},\lambda_{j+1}$ are units in $A_{j+1}$ and $\Lambda_{j+1}\in A_{j+1}$.
\end{Theorem}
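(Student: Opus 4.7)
The plan is to mirror the proof of Theorem \ref{Theorem71}, with the lower Artin--Schreier extension $K_1/K$ defined by $u = x^p/(1-x^{p-1})$ playing the role previously played by $K^*/K_1$. The essential new input is the identity $u = U_0^p/(1-U_0^{p-1})$, which lets us replace every occurrence of $u$ by $U_0^p = x^p$ times a unit of $A$, at the cost of an error term of strictly larger $\nu^*$-value.

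The first and main step is an analog of Lemma \ref{Lemma64} relating $\{P_i\}$ to $\{U_i\}$: by induction on $i$, each $P_i$ equals a parity-dependent power of $U_i$ times a unit plus an error of strictly larger $\nu^*$-value, namely $P_i \sim U_i^p$ when $i$ is even and $P_i \sim U_i$ when $i$ is odd. The base cases are $P_0 = u = U_0^p (1 - U_0^{p-1})^{-1}$, $P_1 = U_1$, and $P_2 = v^{p^2} - u = (U_2 + U_0)^p - u = U_2^p - u U_0^{p-1}$, the last using $(v^p)^p = U_2^p + U_0^p$ in characteristic $p$. The inductive step rests on the uniform recursion $P_{i+1} = P_i^{p^2} - u^{p^{2i-2}} P_{i-1}$ combined with (\ref{eq60}): substituting the inductive leading term of $P_i$ makes the dominant part of $P_i^{p^2}$ a power of $U_i$, and then the matching case of (\ref{eq60}) --- $U_{i+1} = U_i^p - \cdots$ for $i$ odd, $U_{i+1} = U_i^{p^3} - \cdots$ for $i$ even --- identifies this leading part as $U_{i+1}^p$ or $U_{i+1}$ respectively, completing the parity toggle. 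In the odd case one further needs the cancellation $x^{p^{2i-1}} U_{i-1}^p \approx u^{p^{2i-2}} P_{i-1}$ (using $u \approx U_0^p$ and the inductive $P_{i-1} \sim U_{i-1}^p$) to absorb the extra term arising from $(U_i^p)^p = U_{i+1}^p + x^{p^{2i-1}} U_{i-1}^p$. Throughout, the valuations $\overline\mu_i = \nu^*(P_i)$ from Corollary 7.41 \cite{CP} and $\overline\gamma_i = \nu^*(U_i)$ from (\ref{eq123}) provide the bookkeeping to certify that the remaining errors are subdominant.

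With this substitution lemma in hand, one plugs into the explicit formulas of Propositions \ref{Prop88} and \ref{Prop70}: $u_{R_{j+1}}$ and $v_{R_{j+1}}$ are ratios of the $P_i$ divided by monomials in $u$, while $x_{A_{j+1}}$ and $v_{A_{j+1}}$ are the analogous ratios of the $U_i$ divided by monomials in $x = U_0$. Using $u = U_0^p \cdot (\text{unit of } A)$ to convert the $u$-monomials into $U_0^p$-monomials, the ratios collapse to the forms asserted in (\ref{eq83})--(\ref{eq84}): the parity alternation of $U_i$ versus $U_i^p$ in the leading term of $P_i$ is exactly what produces the alternation between $x_{A_{j+1}}$ and $x_{A_{j+1}}^p$ (respectively between $v_{A_{j+1}}$ and $v_{A_{j+1}}^p$), with the subdominant errors absorbed into the $x_{A_{j+1}} \Lambda_{j+1}$ tail. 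Finiteness of each $R_j \to A_j$ then follows exactly as in the last paragraph of the proof of Theorem \ref{Theorem71}: the inclusion is essentially of finite type and quasi-finite, Proposition \ref{Prop90} combined with hypothesis (1) of Theorem 7.38 \cite{CP} shows that $\nu_1$ is the unique extension of $\nu$ to $K_1$, and Zariski's Main Theorem in the form of (10.7) \cite{RES} finishes the case.

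The principal obstacle will be the substitution lemma above. Unlike Lemma \ref{Lemma64}, where $\{U_j\}$ and $\{Q_j\}$ obey parallel recursions, here the uniform $P$-recursion (exponent $p^2$) must be matched against the alternating $U$-recursion (exponent $p$ or $p^3$); the accumulating error terms collect contributions from $P_i^{p^2}$, from $u^{p^{2i-2}}$, and from every expansion of $u$ as $x^p(1-x^{p-1})^{-1}$, and their $\nu^*$-values must be tracked precisely enough to guarantee strict subdominance uniformly in $i$ and through the parity toggle.
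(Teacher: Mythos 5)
Your proposal is correct in outline but takes a genuinely different route from the paper. The paper's proof of Theorem \ref{Theorem72} is very short: it never touches the generating sequences $\{P_i\}$ and $\{U_i\}$ directly. Finiteness of $R_{j+1}\to A_{j+1}$ comes from a sandwich argument ($R_{j+1}\to S_{j+1}$ and $A_{j+1}\to S_{j+1}$ are finite, $A_{j+1}$ is normal, and $\mbox{QF}(R_{j+1})\subset\mbox{QF}(A_{j+1})$), and the expressions (\ref{eq83})--(\ref{eq84}) are obtained purely by composing the already-known stable form of $R_{j+1}\to S_{j+1}$ from (2) of Theorem 7.38 \cite{CP} (where $\overline a=1,\ \alpha_n=1,\ b_n=0,\ \beta_n=1$, so $u_{R_{j+1}}=\gamma x_{S_{j+1}}^p$ and $v_{R_{j+1}}$ has leading $y_{S_{j+1}}$-order $p$) with the expressions (\ref{eq80})--(\ref{eq81}) of Theorem \ref{Theorem71}: one simply solves through the middle ring. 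Your plan instead attacks $R_{j+1}\to A_{j+1}$ head-on via a substitution lemma relating $\{P_i\}$ to $\{U_i\}$, the analogue of Lemma \ref{Lemma64} (which the paper proves only for $\{U_i\}$ vs.\ $\{Q_i\}$), and then plugging into the ratio formulas of Propositions \ref{Prop88} and \ref{Prop70}. I checked your base cases ($P_0\sim U_0^p$ via $u=U_0^p(1-U_0^{p-1})^{-1}$, $P_1=U_1$, $P_2=U_2^p-uU_0^{p-1}$) and the formal induction, using $u^{p^{2i-2}}=x^{p^{2i-1}}\cdot(\text{unit})$ to cancel $x^{p^{2i-1}}U_{i-1}^p$ (resp.\ $x^{p^{2i-1}}U_{i-1}$) against $P_i^{p^2}$; the parity toggle does come out matching (\ref{eq83})--(\ref{eq84}). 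Your finiteness argument (quasi-finiteness, uniqueness of the extension of $\nu$ to $K_1$ via Proposition \ref{Prop90} and (1) of Theorem 7.38 \cite{CP}, then (10.7) of \cite{RES}) is also valid and parallels the one used at the end of Theorem \ref{Theorem71}. Both routes work; the paper's buys brevity by reusing the already-proved top extension, while yours produces the more explicit $P_i$--$U_i$ bookkeeping at the cost of redoing the value estimates. The estimates you flag as the principal obstacle---certifying strict subdominance of the accumulated errors uniformly in $i$ so that the tail lands in $x_{A_{j+1}}\Lambda_{j+1}$---are indeed the crux and would need to be written out in full, as in Lemma \ref{Lemma64}; as it stands your argument is a well-aimed plan rather than a complete proof.
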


\begin{proof} We have that $R_{j+1}\subset A_{j+1}$ and $R_{j+1}\rightarrow A_{j+1}$ is finite since $R_{j+1}\rightarrow S_{j+1}$ and $A_{j+1}\rightarrow S_{j+1}$ are finite, $A_{j+1}$ is normal and $\mbox{QF}(R_{j+1})\subset \mbox{QF}(A_{j+1})$. The expressions (\ref{eq83}) and (\ref{eq84}) follow from (2) of Theorem 7.38 \cite{CP} and Theorem \ref{Theorem71}.
\end{proof}

\end{document}